\documentclass[a4paper,11pt,leqno]{amsart}
\usepackage[lmargin=3cm,tmargin=3.5cm,bmargin=3.5cm,rmargin=3cm]{geometry} 
\setlength\parindent{10pt}
\tolerance 400
\pretolerance 200
\usepackage{amsaddr}
\usepackage{graphicx}
\usepackage{subfig}
\usepackage{amssymb}
\usepackage{amsfonts}
\usepackage{mathrsfs} 
\usepackage{amsmath}
\usepackage{epsfig}
\usepackage{float}
\usepackage{setspace}
\usepackage{tikz}
\usepackage{enumerate}
\usepackage[colorlinks=true,urlcolor=blue, citecolor=red,linkcolor=blue,linktocpage,pdfpagelabels, bookmarksnumbered,bookmarksopen]{hyperref}
\usepackage{fancyhdr}
\newtheorem{theorem}{Theorem}[section]
\newtheorem{lemma}{Lemma}[section]

\newtheorem{remark}{Remark}[section]
\newtheorem{proposition}{Proposition}[section]

\numberwithin{equation}{section}
\newcommand{\R}{\mathbb R}

\begin{document}

\title[IBVP for Schr\"odinger-Korteweg-de Vries system on the half-line]{The  Initial-Boundary Value Problem for the Schr\"odinger-Korteweg-de Vries system on the Half-Line}

\author[M\'arcio Cavalcante]{M\'arcio Cavalcante}
\address{\emph{Centro de Modelamiento Matem\'atico \\ 
Universidad de Chile, Beauchef 851.\\
Edificio Norte – Piso 7, Santiago - Chile}.}
\email{mcavalcante@cmm.uchile.cl}

\author[A. J. Corcho]{Ad\'an J. Corcho}
\address{\emph{Instituto de Matem\'atica, Universidade Federal do Rio de Janeiro.\\ 
Centro de Tecnologia - Bloco C. Cidade Universit\'aria.\\
Ilha do Fund\~ao  21941-909.  Rio de Janeiro - RJ, Brazil}.}
\email{adan@im.ufrj.br}

\thanks{The first author was partially supported by CAPES, Brazil and Centro de Modelamiento Matem\'atico, Chile.}

\thanks{The second author was partially supported by CAPES and CNPq-309752/2013-2, Brazil}

\thanks{\textit{Mathematics Subject Classification}. 35Q53, 35Q55, 35B65.}

\thanks{\textit{Keywords.} Schr\"{o}dinger-KdV equations on the half-line, local well-posednesss}

\date{\today}

\maketitle

\medskip 
\begin{center}
\begin{minipage}{13cm}
{\small \textbf{Abstract.} We prove local well-posedness for the initial-boundary value problem (IBVP) associated to  the Schr\"odinger-Korteweg de Vries system  on right and left half-lines. The results are obtained in the low regularity setting by using two analytic families of boundary forcing operators, being one of these family developed by Holmer to study the IBVP associated to the Korteweg-de Vries equation (Communications in Partial Differential Equations, 31 (2006)) and the other family one was recently introduced by Cavalcante (Differential and Integral Equations, 30 (2017)) in the context of nonlinear Schr\"odinger with quadratic nonlinearities.}
\end{minipage}
\end{center}


\medskip 
\section{\textbf{Introduction}}
The Schr\"odinger-Korteweg-de Vries (NLS-KdV) system is given by the coupled equations:
\begin{equation}\label{SKpuro}
\begin{cases}
iu_t+u_{xx}=\alpha uv+\beta |u|^2u& \text{for}\quad (x,t)\in\mathbb{R}\times(0,T),\\
v_t+v_{xxx}+ vv_x=\gamma (|u|^2)_x& \text{for}\quad (x,t)\in\mathbb{R}\times(0,T),\\
u(x,0)=u_0(x),\ v(x,0)=v_0(x)& \text{for}\quad x\in\mathbb{R},
\end{cases}
\end{equation}
where $u=u(x,t)$ is a complex-valued function, $v=v(x,t)$ is a real valued function and $\alpha,\, \beta, \, \gamma$ are real constants.

More precisely, the system consists in a non-linear coupling of the classical cubic nonlinear Schr\"odinger (NLS) and Korteweg-de Vries (KdV) equations. For a didactic introduction to both models we refer the books \cite{Linares-Ponce}  and \cite{Tao}. From the physical point of view the NLS-KdV system governs the interactions between short-wave, $u=u(x, t)$, and long wave, $v=v(x, t)$, in dispersive media. For  example, the interactions described by \eqref{SKpuro} can arises in some phenomena of fluid mechanics as well as plasma physics. We refer the works \cite{fi1}, \cite{fi2}, \cite{fi3} and \cite{fi4} for a better description of these applications.
Recently, the authors of the works \cite{D-Nuyen-S, Liu-Nguyen} studied more deeply the physical derivation of the model and they ensure that NLS-KdV system can not be deduced from the water wave equations. 

The central theme of this paper is the local well-posedness theory for the NLS-KdV system on the half-line. In order to motivate our study in the context of the half-line, we first recall some known results for system \eqref{SKpuro} about well-posedness in the whole line for initial data $(u_0, v_0)$ in classical Sobolev spaces  
$H^s(\R)\times H^k(\R)$, which we summarize as follows:
\begin{itemize}
	\item M. Tsutsumi \cite{T} showed  global well-posedness  when $s=k+1/2$ with $k\in \mathbb{Z}^+$. \smallskip 
	\item B. Guo and C. Miao \cite{GM} derived global well-posedness for resonant interactions ($\beta= 0$), when $s=k$ with $s\in\mathbb{Z}^+$.\smallskip
	\item D. Bekiranov, T. Ogawa and G. Ponce \cite{BOP} showed local well-posedness, by using the Fourier restriction method, when  $k=s-1/2$ with $s \geq 0$. \smallskip
	\item A. J. Corcho and F. Linares \cite{CL}  improved the local results in \cite{BOP} to the class described by $s\geq 0$ and  $k>-3/4$ provided the conditions:
	$s-1 \leq k \leq 2s-1/2$\, for\, $s\leq 1/2$\, and\,  $s-1\leq k <s+1/2$\, for\, $s>1/2$.\smallskip
	\item H. Pecher \cite{Pecher} showed, when $\alpha \gamma >0$, global well-posedness  for $s=k$ in the cases:  
	$3/5<s<1$ when $\beta =0$  and $2/3<s<1$ when $\beta \neq 0$.\smallskip 
	\item  Y. Wu \cite{YW} extended the results in \cite{CL}, showing local well-posedness for indexes $s\geq 0,\ -3/4<k <4s$ and $s-2<k<s+1$ for any $\beta \in \R$  and  for resonant interactions ($\beta=0$) local theory is developed for indexes $-3/4<k<4s$ and  $s-2<k<s+1$. Moreover,  when $\alpha \gamma >0$, the author proved global well-posedness for $k=s$ with 
	$1/2<s<1$ no matter in the resonant case or in the non-resonant case.\smallskip 
	\item H. Wang and S. Cui \cite{WangCui} developed local theory for $k=-3/4$  with $s=0$, for any $\beta \in \R$,  and with $-3/16< s\le 1/4$ for resonant interactions.\smallskip 
	\item Z. Guo and Y. Wang \cite{GW} showed local-well posedness for $k=-3/4$  with $s=0$, for any $\beta \in \R$,  and with $s>-1/16$ for resonant interactions.
\end{itemize}
A natural question is to know what is the region of Sobolev indexes that describes the local theory for the 
Schr\"odinger-Korteweg-de Vries interactions when the model is considered in the half-line. As we shall see, the problem of well-posedness for half-line is technically more difficult than the whole line.
 
\subsection{The model on the half-line} For the initial-boundary value problem (IBVP) as far as
we know the well-posedness theory of the NLS-KdV system on the half-line is unknown. We shall study  two formulations of the IBVP for the NLS-KdV system. Firstly we study the model on the right half-line 
$\mathbb{R}^+=(0,+\infty)$, namely
\begin{equation}\label{SK}
\begin{cases}
iu_t+u_{xx}=\alpha uv+\beta |u|^2u& \text{for}\quad (x,t)\in(0,+\infty)\times(0,T),\\
v_t+v_{xxx}+ vv_x=\gamma (|u|^2)_x& \text{for}\quad (x,t)\in(0,+\infty)\times(0,T),\\
u(x,0)=u_0(x),\ v(x,0)=v_0(x)& \text{for}\quad x\in (0,+\infty),\\
u(0,t)=f(t),\ v(0,t)=g(t)& \text{for}\quad t\in(0,T).
\end{cases}
\end{equation}

The second formulation studied in this work is the case of the left half-line $\mathbb{R}^-=(-\infty,0)$, given by the following model:
\begin{equation}\label{SKe}
\begin{cases}
iu_t+u_{xx}=\alpha uv+\beta |u|^2u& \text{for}\quad (x,t)\in(-\infty,0)\times(0,T),\\
v_t+v_{xxx}+ vv_x=\gamma (|u|^2)_x& \text{for}\quad (x,t)\in(-\infty,0)\times(0,T),\\
u(x,0)=u_0(x),\ v(x,0)=v_0(x)& \text{for}\quad x\in(-\infty,0),\\
u(0,t)=f(t),\ v(0,t)=g(t),\ v_x(0,t)=h(t)&  \text{for}\quad t\in(0,T).
\end{cases}
\end{equation}

The presence of one boundary condition in the right half-line problem \eqref{SK} versus two boun\-dary conditions in the left half-line problem \eqref{SKe} for the KdV-component of the system can be  motivated by integral identities on smooth solutions to the linear KdV equation $v_t+ v_{xxx}=0$. Indeed, for such $v$ and an arbitrary time $t$ such that $0<t<T$, we have
\begin{equation}\label{unico1}
\int_0^{+\infty}v^2(x,t)dx=\int_0^{+\infty}v^2(x,0)dx + \int_0^t\Big[2v(0,t')v_{xx}(0,t')-v_x^2(0,t')\Big]dt'
\end{equation}
and
\begin{equation}\label{unico2}
\int_{-\infty}^0v^2(x,t)dx=\int_{-\infty}^0v^2(x,0)dx-\int_0^t\Bigl[2v(0,t')v_{xx}(0,t')-v_x^2(0,t')\Bigl]dt'.
\end{equation}

\smallskip
So, assuming $v(x,0)=0=v(0,t')$ for all $x>0$ and $0<t'<t$, we can conclude from \eqref{unico1} that $v(x,t)=0$ for all $x>0$. On the other hand, the possibility of existence of $v(x,t)\neq 0$ for $x<0$  satisfying $v(x,0)=0=v(0,t')$ for all $x<0$ and $0<t'<t$ is not precluded by \eqref{unico2}; indeed, such nonzero solutions do exist as can be seen in Subsection \ref{section4}. However, \eqref{unico2} does show that homogeneous conditions $v(x,0)=v(0,t')=v_x(0,t')=0$ for all $x<0$ and $0<t'<t$ imply that 
$v(x,t)=0$ for all $x<0$. 

\subsection{Functional spaces for the initial-boundary data}
Now we discuss appropriate functional spaces for the initial and boundary data, examining again the behavior of solutions of the linear problem on the line $\mathbb{R}$ as motivation.

On the line $\mathbb{R}$, we define the $L^2$-based inhomogeneous Sobolev spaces $H^s(\mathbb{R})$ equipped with the norm $\|\phi\|_{H^s(\mathbb{R})}=\|\langle\xi\rangle^s\widehat{\phi}(\xi)\|_{L^2(\mathbb{R})}$, where $\langle\xi\rangle=1+|\xi|$ and $\widehat{\phi}$ denotes the Fourier transform of $\phi$. 

The operators $e^{it\partial_x^2}$ and $e^{-t\partial_x^3}$ denote the linear homogeneous solution group associated to the linear Schr\"odinger and KdV equations, respectively, posed on $\mathbb{R}$, i.e.,

\begin{equation}
e^{it\partial_x^2}\phi(x)=\frac{1}{2\pi}\int_{\mathbb{R}}e^{ix\xi}e^{-it\xi^2}\widehat{\phi}(\xi)d\xi
\end{equation}
and
\begin{equation}
e^{-t\partial_x^3}\phi(x)=\frac{1}{2\pi}\int_{\mathbb{R}}e^{i\xi x}e^{it\xi^3}\widehat{\phi}(\xi)d\xi.
\end{equation}

\smallskip
Some important time localized smoothing effects for the unitary groups $e^{it\partial_x^2}$ and $e^{-t\partial_x^3}$ can be found in \cite{KPV}. More specifically, we have the following estimates:
\begin{align*}
 &\|\psi(t)e^{it\partial_x^2}\phi(x)\|_{\mathcal{C}\big(\mathbb{R}_x;\; H^{(2s+1)/4}(\mathbb{R}_t)\big)}\leq c\|\phi\|_{H^{s}(\mathbb{R})},\\
 &\|\psi(t) e^{-t\partial_x^3}\phi(x)\|_{\mathcal{C}\big(\mathbb{R}_x;\; H^{(k+1)/3}(\mathbb{R}_t)\big)}\leq c \|\phi\|_{H^k(\mathbb{R})}\\
 \intertext{and}
 &\|\psi(t) \partial_xe^{-t\partial_x^3}\phi(x)\|_{\mathcal{C}\big(\mathbb{R}_x;\; H^{k/3}(\mathbb{R}_t)\big)}\leq c \|\phi\|_{H^k(\mathbb{R})},
\end{align*}
where $\psi(t)$ is a localized smooth cutoff function. This smoothing effects suggest that for data in the IBVP (\ref{SK}) is natural to consider the  following hypothesis: we put
\begin{align*}
&\mathscr{H}^{s,k}_+:=H^s(\mathbb{R}^+)\times H^k(\mathbb{R}^+)\times H^{(2s+1)/4}(\mathbb{R}^+)\times  H^{(k+1)/3}(\mathbb{R}^+),\\
&\mathscr{H}^{s,k}_-:= H^s(\mathbb{R}^-)\times H^k(\mathbb{R}^-)\times H^{(2s+1)/4}(\mathbb{R}^+)\times  H^{(k+1)/3}(\mathbb{R}^+)\times H^{k/3}(\mathbb{R}^+), 
\end{align*}
and then we take $(u_0,v_0,f,g)$ belonging to the class
\begin{equation}\label{hipinicial1}
\begin{cases}
\mathscr{H}^{s,k}_+& \text{if}\; s, k <1/2,\medskip\\ 
\mathscr{H}^{s,k}_+\;\text{with}\; u_0(0)=f(0) & \text{if}\; 1/2<s<3/2\; \text{and}\; k <1/2\medskip,\\ 
\mathscr{H}^{s,k}_+\;\text{with}\; v_0(0)=g(0) & \text{if}\; s<1/2\; \text{and}\; 1/2<k<3/2\medskip,\\ 
\mathscr{H}^{s,k}_+\;\text{with}\; u_0(0)=f(0)\;\text{and}\; v_0(0)=g(0) & \text{if}\; 1/2<s, k <3/2.
\end{cases}
\end{equation}
Similarly,  for  the IBVP (\ref{SKe}) we consider $(u_0,v_0,f,g,h)$ belonging to the class
\begin{equation}\label{hipinicial2}
\begin{cases}
\mathscr{H}^{s,k}_-& \text{if}\; s, k <1/2,\medskip\\ 
\mathscr{H}^{s,k}_-\;\text{with}\; u_0(0)=f(0) & \text{if}\; 1/2<s<3/2\; \text{and}\; k <1/2\medskip,\\ 
\mathscr{H}^{s,k}_-\;\text{with}\; v_0(0)=g(0) & \text{if}\; s<1/2\; \text{and}\; 1/2<k <3/2\medskip,\\ 
\mathscr{H}^{s,k}_-\;\text{with}\; u_0(0)=f(0)\;\text{and}\; v_0(0)=g(0) & \text{if}\; 1/2<s, k <3/2.
\end{cases}
\end{equation}

Our goal in studying IBVPs \eqref{SK}--\eqref{hipinicial1} and \eqref{SKe}--\eqref{hipinicial2} is to develop a local theory for 
low regularity of the initial data. 

\subsection{Main results}
We are in position to enunciate the main results of this work. We begin by establishing the results regarding the IBVP \eqref{SK}, which reads as follows: 

\begin{theorem}[\textbf{Local theory for right half-line}]\label{teorema1} 
Let the index sets (see Figure \ref{Figura-1}), defined by  
	\begin{align*}
	&\mathcal{D}:=\Big\{(s,k)\in\mathbb{R}^2;\; 0\leq s<\tfrac12\;\; \text{and}\;\; \max\big\{-\tfrac{3}{4},\,s-1\big\}<k<\min\big\{4s-\tfrac12,\, \tfrac12\big\} \Big\},\\
	&\mathcal{D}_0:=\Big\{(s,k)\in\mathbb{R}^2;\; \tfrac12<s<1\;\; \text{and}\;\; s-1< k<\tfrac12 \Big\}
	\end{align*}
	and consider the IBVP \eqref{SK} with condition \eqref{hipinicial1}, where 
	$$(s,k)\in \mathcal{D}\quad  \text{if}\quad  \beta\neq 0\quad \text{and}\quad  (s,k)\in \mathcal{D}\cup \mathcal{D}_0\quad \text{if}\quad \beta= 0.$$
	Then, there exists a positive time
	$$
	T=T\big(\|u_0\|_{H^s(\R^+)},\|v_0\|_{H^k(\R^+)},\|f\|_{H^{(2s+1)/4}(\R^+)},\|g\|_{H^{(k+1)/3}(\R^+)}\big)
	$$ 
	and a local solution $\big(u(\cdot, t),v(\cdot, t)\big)$, in the distributional sense, such that
	\begin{equation}\label{imagem}
	\big(u(\cdot, t), v(\cdot, t)\big)\in \mathcal{C}\big([0,T];\; H^s(\mathbb{R}^+)\times H^k(\mathbb{R}^+)\big).
	\end{equation}
	Moreover, the map $(u_0,v_0,f,g)\longmapsto (u(\cdot, t),v(\cdot, t))$ is locally Lipschitz-continuous from the space given in \eqref{hipinicial1} into the class  
	$\mathcal{C}\big([0,T];\; H^s(\mathbb{R}^+)\times H^k(\mathbb{R}^+)\big)$. 
\end{theorem}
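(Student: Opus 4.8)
The plan is to recast the IBVP \eqref{SK} as an equivalent system of integral equations posed on the whole line and to solve it by a fixed-point argument in adapted Bourgain spaces, following the boundary-forcing strategy of Colliander--Kenig and Holmer. Throughout I would work with the Bourgain spaces $X^{s,b}$ associated to the Schr\"odinger group $e^{it\partial_x^2}$ and $Y^{k,b}$ associated to the KdV group $e^{-t\partial_x^3}$, together with their time-restricted counterparts $X^{s,b}_T$, $Y^{k,b}_T$, fixing $b,b'$ close to $1/2$ so that the embedding into $C(\mathbb{R}_t;H^s)$ holds and the Duhamel contribution gains a small positive power of $T$.

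First I would extend the initial data. Since $u_0\in H^s(\mathbb{R}^+)$ and $v_0\in H^k(\mathbb{R}^+)$ with $0\le s<1$ and $-3/4<k<1/2$, there exist extensions $\tilde u_0\in H^s(\mathbb{R})$, $\tilde v_0\in H^k(\mathbb{R})$ with comparable norms; in the subcritical ranges $s,k<1/2$ no compatibility is needed, whereas for $s>1/2$ (resp. $k>1/2$) the matching conditions $u_0(0)=f(0)$ (resp. $v_0(0)=g(0)$) in \eqref{hipinicial1} guarantee that the boundary data can be absorbed consistently. Next, using the Duhamel principle on $\mathbb{R}$ together with the Schr\"odinger boundary forcing operator of Cavalcante (call it $\mathcal{L}$) and the KdV boundary forcing operator of Holmer (call it $\mathcal{V}$), I would seek a solution of the coupled system
\begin{align*}
u &= \psi(t)\,e^{it\partial_x^2}\tilde u_0 -i\,\psi(t)\int_0^t e^{i(t-t')\partial_x^2}\bigl(\alpha uv+\beta u|u|^2\bigr)\,dt' + \psi(t)\,\mathcal{L}(f-p),\\
v &= \psi(t)\,e^{-t\partial_x^3}\tilde v_0 -\psi(t)\int_0^t e^{-(t-t')\partial_x^3}\partial_x\bigl(\tfrac12 v^2-\gamma|u|^2\bigr)\,dt' + \psi(t)\,\mathcal{V}(g-q),
\end{align*}
where $\psi$ is a smooth temporal cutoff and $p,q$ denote the traces at $x=0$ of the free-evolution-plus-Duhamel parts, chosen so that the full right-hand sides carry the prescribed boundary values $f,g$ at $x=0$.

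The analytic heart is a package of estimates in these spaces: (i) the standard linear group and Duhamel bounds in $X^{s,b}$, $Y^{k,b}$; (ii) the mapping properties of $\mathcal{L},\mathcal{V}$ from the boundary Sobolev spaces $H^{(2s+1)/4}(\mathbb{R}^+_t)$, $H^{(k+1)/3}(\mathbb{R}^+_t)$ into the corresponding Bourgain and Kato-smoothing spaces, together with the trace estimates controlling $p$ and $q$; and (iii) the multilinear estimates for the four interactions $uv$, $u|u|^2$, $\partial_x v^2$, $\partial_x(|u|^2)$. For the coupling terms I would use the bilinear $X^{s,b}$--$Y^{k,b}$ estimates of Corcho--Linares and Wu, which are exactly what pin down the region $\mathcal{D}$: the constraint $k>-3/4$ comes from the KdV self-interaction $\partial_x v^2$, while $k<4s-1/2$ and $s-1<k$ come from the coupling terms $uv$ and $\partial_x(|u|^2)$, and the upper bounds $s,k<1/2$ keep the boundary operators in their favorable regime. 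The cubic term $\beta u|u|^2$ is handled by a trilinear Schr\"odinger estimate, and it is precisely this estimate (in combination with $\mathcal{L}$) that fails to close on $\mathcal{D}_0$, explaining why the enlarged region $\mathcal{D}_0$ with $1/2<s<1$ is admitted only when $\beta=0$.

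With these estimates in hand, the contraction is routine: for $T$ small the nonlinear map is a contraction on a ball in $X^{s,b}_T\times Y^{k,b}_T$, producing a unique fixed point. Restricting $(u,v)$ to $x>0$ yields a distributional solution of \eqref{SK} which satisfies the initial and boundary conditions by construction, lies in $C\big([0,T];H^s(\mathbb{R}^+)\times H^k(\mathbb{R}^+)\big)$ via the time-continuity of the Bourgain spaces (after the usual interpolation to recover continuity at the endpoint $b=1/2$), and depends Lipschitz-continuously on $(u_0,v_0,f,g)$ because every bound entering the contraction is multilinear and hence Lipschitz on bounded sets. I expect the principal obstacle to be the simultaneous closure of the multilinear and boundary-operator estimates over the full region $\mathcal{D}$: the forcing operators $\mathcal{L},\mathcal{V}$ introduce frequency weights that interact delicately with the resonance structure of the KdV bilinear term near $k=-3/4$ and with the coupling term along the line $k=4s-1/2$, so verifying that no derivative loss occurs there is the crux of the argument.
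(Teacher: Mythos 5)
Your overall architecture (extension of the data, Duhamel boundary forcing, fixed point in Bourgain-type spaces, restriction to $x>0$) matches the paper's. But there is a concrete gap at the heart of your plan: you propose to run the contraction with $b$ chosen ``close to $1/2$ so that the embedding into $C(\mathbb{R}_t;H^s)$ holds'' and to close the nonlinear estimates with ``the bilinear $X^{s,b}$--$Y^{k,b}$ estimates of Corcho--Linares and Wu.'' Those estimates are proved for $b>\tfrac12$, and the embedding $X^{s,b}\hookrightarrow C(\mathbb{R}_t;H^s)$ also requires $b>\tfrac12$. However, the boundary forcing operators are bounded into $X^{s,b}$ and $Y^{k,b}$ only for $b<\tfrac12$ (Lemma \ref{edbf}(c) and Lemma \ref{cof}(d)); the Airy forcing term genuinely fails to lie in $Y^{k,1/2}$. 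So with your choice of exponents the boundary terms are not even in the space where you want to contract, and if you instead lower $b$ below $\tfrac12$ the off-the-shelf bilinear estimates you invoke are no longer available. This is precisely why the paper re-derives all four nonlinear estimates for $b<\tfrac12$ (Lemma \ref{bilinear1} and Propositions \ref{acoplamento1}--\ref{estimativau}), introduces the low-frequency correction space $V^{\alpha}$ to salvage the KdV bilinear estimate in that regime, and adds the time-adapted spaces $W^{s,b}$, $U^{k,b}$ to control the time traces of the Duhamel terms outside the middle range of regularities. Relatedly, since $b<\tfrac12$ gives no embedding into $C_tH^s_x$, the paper does not ``interpolate to recover continuity at $b=1/2$'' but builds the continuity classes directly into the resolution space $Z_1\times Z_2$ as intersections, with separate space-trace and time-trace estimates for each of the three pieces (group, Duhamel operator, forcing operator).

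Two smaller points. First, for $k$ near $-\tfrac34$ the plain Holmer operator $\mathcal{V}=\mathcal{V}^0_{+}$ is not admissible: Lemma \ref{cof} requires $\lambda<k+\tfrac12$, which forces a negative $\lambda$, so the generalized classes $\mathcal{V}_{+}^{\lambda}$ (and likewise $\mathcal{L}_{+}^{\lambda}$ for the Schr\"odinger part) with $\lambda=\lambda(s),\lambda(k)$ chosen regularity-dependently are essential, not optional. Second, the fixed point gives uniqueness only within the ball of the auxiliary integral formulation on $\mathbb{R}$; the theorem as stated (and the paper's discussion) deliberately does not claim uniqueness of the solution of the IBVP itself, so your phrase ``unique fixed point'' should not be promoted to uniqueness of the solution.
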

\begin{remark}
	Note that the best result achieved  in Theorem \ref{teorema1} is for data $(u_0,v_0)$ in the Sobolev space
	$L^2(\R^+) \times H^{-\frac{3}{4}+}(\R^+)$, that is the regularity obtained by Corcho and Linares in \cite{CL}, which can be seen as the almost sharp regularity when $\beta\neq0$, except for the end point $(s, k)=(0, -\frac{3}{4})$.
\end{remark}

On the other hand, when we consider higher regularities, the technique employed here imposes smallness assumptions on data for the KdV  component of the system. More precisely, we have the following local theory:

\begin{theorem}[\textbf{Local theory for right half-line with small data}]\label{teorema11}
	Let the index sets (see Figure \ref{Figura-1}), defined by 
	\begin{align*}
	&\widetilde{\mathcal{D}}:=\Big\{(s,k)\in\mathbb{R}^2;\; \tfrac14<s<\tfrac12\;\; \text{and}\;\; \tfrac12<k<\min\big\{4s-\tfrac12,s+\tfrac12\big\} \Big\}\\
	&\widetilde{\mathcal{D}}_0:=\Big\{(s,k)\in\mathbb{R}^2;\; \tfrac12<s<1\;\; \text{and}\;\; \tfrac12<k<s+1/2 \Big\},
	\end{align*}
	and consider the IBVP \eqref{SK} with condition \eqref{hipinicial1}, where
    $$(s,k)\in \widetilde{\mathcal{D}}\quad  \text{if}\quad   \beta\neq 0\quad \text{and}\quad 
    (s,k)\in \widetilde{\mathcal{D}}\cup \widetilde{\mathcal{D}}_0\quad  \text{if}\quad \beta= 0.$$
    Assume in addition that
	\begin{equation}\label{dadospequenosRHL}
	\|v_0\|_{H^k(\mathbb{R}^+)}+\|g\|_{H^{(k+1)/3}(\mathbb{R}^+)}\leq \delta
	\end{equation}
	with $\delta$  small enough. Then, there exists a positive time
	$$
	T=T\big(\|u_0\|_{H^s(\R^+)},\|v_0\|_{H^k(\R^+)},\|f\|_{H^{(2s+1)/4}(\R^+)},\|g\|_{H^{(k+1)/3}(\R^+)}\big)
	$$ 
	and a local solution $\big(u(\cdot, t),v(\cdot, t)\big)$, in the distributional sense, such that
	\begin{equation}\label{imagem2}
	\big(u(\cdot, t), v(\cdot, t)\big)\in \mathcal{C}\big([0,T];\; H^s(\mathbb{R}^+)\times H^k(\mathbb{R}^+)\big).
	\end{equation}
	Moreover, the map $(u_0,v_0,f,g)\longmapsto (u(\cdot, t), v(\cdot,t))$ is locally Lipschitz-continuous from the space given in \eqref{hipinicial1} into the class  
	$\mathcal{C}\big([0,T];\; H^s(\mathbb{R}^+)\times H^k(\mathbb{R}^+)\big)$. 
\end{theorem}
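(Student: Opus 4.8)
The plan is to run the same contraction scheme used for Theorem~\ref{teorema1}, but to keep explicit track of the powers of the time length $T$ and to replace the ``small $T$'' mechanism by the smallness hypothesis \eqref{dadospequenosRHL} on the KdV data. First I would recast \eqref{SK} as a coupled system of integral equations: after extending $(u_0,v_0)$ to all of $\R$ and encoding the boundary contributions through the Duhamel boundary forcing operators — the Cavalcante family for the Schr\"odinger unknown $u$ and the Holmer family for the KdV unknown $v$ — the solution is sought as a fixed point of a map $\Lambda=(\Lambda_1,\Lambda_2)$ whose components are
\begin{align*}
\Lambda_1(u,v)&=\psi(t)e^{it\partial_x^2}\tilde u_0-i\psi(t)\!\int_0^t\! e^{i(t-t')\partial_x^2}\bigl(\alpha uv+\beta u|u|^2\bigr)\,dt'+\text{(Schr\"odinger boundary term)},\\
\Lambda_2(u,v)&=\psi(t)e^{-t\partial_x^3}\tilde v_0-\psi(t)\!\int_0^t\! e^{-(t-t')\partial_x^3}\bigl(\tfrac12\partial_x v^2-\gamma\partial_x|u|^2\bigr)\,dt'+\text{(KdV boundary term)},
\end{align*}
the boundary terms being tuned so that the traces $u(0,t)=f$ and $v(0,t)=g$ are matched. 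I would carry out the fixed point in a product $X^{s,b}\times Y^{k,b}$ of Bourgain spaces adapted to the two groups and restricted to $[0,T]$, intersected with the time-trace spaces prescribed by \eqref{hipinicial1}.

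The linear ingredients I need are exactly those already established for Theorem~\ref{teorema1}: boundedness of the free groups and of the Duhamel and boundary forcing operators on $X^{s,b}$, $Y^{k,b}$ and on the time-trace norms $H^{(2s+1)/4}_t$, $H^{(k+1)/3}_t$ (the latter being the Kenig--Ponce--Vega smoothing effects recalled in the Introduction). The decisive feature is a dichotomy in $T$: the Schr\"odinger and KdV time-trace indices $(2s+1)/4$ and $(k+1)/3$ cross the value $1/2$ exactly at $s=1/2$ and $k=1/2$. Below these thresholds the sharp-time-cutoff lemmas supply a factor $T^{\theta}$ with $\theta>0$, whereas in the present regime $\widetilde{\mathcal D},\widetilde{\mathcal D}_0$ impose $k>1/2$, which pushes the KdV Bourgain exponent $b$ to the endpoint at which the gain in $T$ is lost; the compatibility conditions $u_0(0)=f(0)$, $v_0(0)=g(0)$ of \eqref{hipinicial1} are precisely what make the trace operators well defined there.

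Turning to the nonlinear estimates, I would prove the coupling bound for $uv$ in $X^{s,b-1}$, the coupling bound for $\partial_x|u|^2$ in $Y^{k,b-1}$, the cubic bound for $u|u|^2$ in $X^{s,b-1}$ (needed only when $\beta\neq0$), and the KdV self-interaction bound for $\partial_x(v^2)$ in $Y^{k,b-1}$; the ranges $\widetilde{\mathcal D},\widetilde{\mathcal D}_0$ are exactly those in which these close, with $s>1/4$ keeping the window $1/2<k<4s-1/2$ non-empty and the confinement $s<1/2$ for $\beta\neq0$ reflecting the range in which the half-line cubic estimate is available. Every contribution other than the quadratic KdV self-interaction still carries a positive power of $T$ and is harmless for small $T$; the single exception is $\partial_x(v^2)$, whose estimate gains no power of $T$ once $k>1/2$, so that its contribution to $\Lambda_2$ is only $\lesssim\|v\|_{Y^{k,b}}^2$ and can be controlled merely by keeping the KdV norm small. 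This is where \eqref{dadospequenosRHL} enters: choosing a ball whose KdV radius is $R_K\sim\delta$, the linear part of $\Lambda_2$ contributes $\lesssim\|v_0\|_{H^k(\R^+)}+\|g\|_{H^{(k+1)/3}(\R^+)}\le\delta$ and the quadratic part $\lesssim R_K^2\lesssim\delta^2$, so for $\delta$ small enough $\Lambda_2$ preserves the ball while the associated difference estimate gains a factor $\lesssim R_K\lesssim\delta<1$, giving the contraction (the coupling and cubic terms being then absorbed by taking $T=T(\delta)$ small).

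I expect the main obstacle to be exactly this KdV self-interaction estimate $\|\partial_x(v^2)\|_{Y^{k,b-1}}\lesssim\|v\|_{Y^{k,b}}^2$ at the endpoint Bourgain exponent forced by $k>1/2$, transplanted to the half-line through the Holmer boundary operators: one must show it holds with a finite constant independent of $T$ while remaining compatible with the trace condition $v_0(0)=g(0)$. Granting all of the above, the contraction mapping principle produces a unique fixed point, which is the desired distributional solution satisfying \eqref{imagem2} in $C\big([0,T];H^s(\R^+)\times H^k(\R^+)\big)$, and the difference estimates established along the way yield at once the Lipschitz continuity of the data-to-solution map $(u_0,v_0,f,g)\mapsto(u,v)$ on the space \eqref{hipinicial1}, completing the proof.
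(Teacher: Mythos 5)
Your proposal follows essentially the same route as the paper: the same contraction scheme as in Theorem \ref{teorema1}, in the same function spaces and with the same linear and bilinear ingredients, together with the key observation that for $k>\tfrac12$ the KdV self-interaction $\partial_x(v^2)$ contributes only $\lesssim\|v\|_{Y^{k,b}\cap V^{\alpha}}^2$ with no positive power of $T$, so that the smallness hypothesis \eqref{dadospequenosRHL} (a ball of KdV-radius $\sim\delta$) is what closes both the self-mapping and the difference estimates. The only real difference is in how the loss of the $T^{\epsilon}$ factor is explained: the paper realizes it by repositioning the time cutoff $\psi_T$ in $\Lambda_2$ and in $h_2$, which is forced because for $k>\tfrac12$ the time-trace estimate of Lemma \ref{duhamelk} requires the auxiliary $U^{k,\cdot}$ norm and the $V^{\alpha}$ norm with $\alpha>\tfrac12$ falls outside the range of the time-localization Lemma \ref{T}, whereas you attribute it to an endpoint choice of the Bourgain exponent $b$; this is a difference of bookkeeping, not of substance, and the resulting estimates and conclusion coincide with the paper's.
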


\begin{figure}[htp]
	\centering 
	\begin{tikzpicture}[scale=3]
	\draw[very thin](-0.5,0)--(1/8,0);
	\draw[very thin, ->] (1,0)--(1.5,0) node[below] {$\boldsymbol{s}\; {\scriptstyle (NLS)}$};
	\draw[->] (0,-1)--(0,1.7) node[right] {$\boldsymbol{k}\; {\scriptstyle (KdV)}$};
	\filldraw[color=gray!30](1,0.5)--(1,1.5)--(0.5,1)--(0.5,0.5)--(1,0.5);
	\filldraw[color=gray!50](0.5,0.5)--(0.5,1)--(1/3,5/6)--(0.25,0.5)--(0.5,0.5);
	\draw[very thick, dashed](0,-0.75)--(0.25,-0.75)--(1,0)--(1,1.5)--(0.5,1)--(0.5,0.5);
	\draw[very thick, dashed](0.5,1)--(1/3,5/6)--(0,-0.5);
	\draw[very thick, dashed](0.5,-0.5)--(0.5,0.5);
	\draw[very thick, dashed](0.5,0.5)--(1,0.5);
	\draw[very thick, dashed](0.5,0.5)--(0.25,1/2);
	\draw[thick](0,-0.5)--(0,-0.75);
	\node at (0.3,-0.1){$\boldsymbol{\scriptstyle \mathcal{D}}$};
	\node at (0.75,0.2){$\boldsymbol{\scriptstyle \mathcal{D}_0}$};
	\node at (0.75,0.05){$\boldsymbol{\scriptstyle (\beta=0)}$};
	\node at (0.75,0.85){$\boldsymbol{\scriptstyle \widetilde{\mathcal{D}}_0}$};
	\node at (0.75,0.7){$\boldsymbol{\scriptstyle (\beta=0)}$};
	\node at (0.4,0.7){$\boldsymbol{\scriptstyle \widetilde{\mathcal{D}}}$};
	\node at (1.05,-0.07){$1$};
	\node at (1.06,0.5){$\frac{1}{2}$};
	\node at (-0.1,-3/4){$-\frac{3}{4}$};
	\node at (0.67,1.32)[rotate=45]{\small{$\boldsymbol{\scriptstyle k=s+1/2}$}};
	\node at (0.15,0.4)[rotate=75.96375653]{\small{$\boldsymbol{\scriptstyle k=4s-1/2}$}};
	\node at (0.7,-0.44)[rotate=45]{\small{$\boldsymbol{\scriptstyle k=s-1}$}};
	\end{tikzpicture}
	\caption{{\small Regions of local well-posedness achieved in Theorems \ref{teorema1} and \ref{teorema11}. For the regions highlighted in gray the local theory is developed with smallness assumption on the data for the KdV-component of the system.}}
	\label{Figura-1}
\end{figure}

Similar results are obtained for the IBVP  (\ref{SKe}) posed on the left half-line, which are described as follows.

\begin{theorem}[\textbf{Local theory for left half-line}]\label{teorema2}
	Let the index sets (see Figure \ref{Figura-2}), defined by
	\begin{align*}
	&\mathcal{E}:=\Big\{(s,k)\in\mathbb{R}^2;\ \tfrac18<s<\tfrac12\ \text{and}\ 0\leq k<\min\big\{4s-\tfrac12,\tfrac12\big\}\Big\},\\
	&\mathcal{E}_0:=\Big\{(s,k)\in\mathbb{R}^2;\ \tfrac12<s<1\ \text{and}\ 0\leq k<\tfrac12\Big\}
	\end{align*}
	and consider the IBVP \eqref{SKe} with condition \eqref{hipinicial2}, where
	$$(s,k)\in \mathcal{E}\quad  \text{if}\quad \beta\neq 0,\quad \text{and}\quad 
	(s,k)\in \mathcal{E}\cup \mathcal{E}_0\quad  \text{if}\quad\beta= 0.$$
	Then, there exists a positive time
	$$
	T=T\big(\|u_0\|_{H^s(\R^-)},\|v_0\|_{H^k(\R^-)},\|f\|_{H^{(2s+1)/4}(\R^+)},\|g\|_{H^{(k+1)/3}(\R^+)}, \|h\|_{H^{k/3}(\R^+)}\big)
	$$ 
	and a local solution $\big(u(\cdot, t),v(\cdot, t)\big)$, in the distributional sense, such that
	 	\begin{equation}\label{imagem3}
	 	\big(u(\cdot, t), v(\cdot, t)\big)\in 	\mathcal{C}\big([0,T];\; H^{s}(\mathbb{R}^-)\times  H^{k}(\mathbb{R}^-)\big).
	 	\end{equation}
	Moreover, the map $(u_0,v_0,f,g,h)\longmapsto (u(t),v(t))$ is locally Lipschitz-continuous from the space given in \eqref{hipinicial2} into the class 
	$\mathcal{C}\big([0,T];\; H^{s}(\mathbb{R}^-)\times H^{k}(\mathbb{R}^-)\big)$.
\end{theorem}

Again, we need to imposes  smallness assumptions for data of the KdV component of the system im some regions of Sobolev indexes, now for low regularity besides.

\begin{theorem}[\textbf{Local theory for left half-line with small data}]\label{teorema22} 
Let the index sets (see Figure \ref{Figura-2}), defined by
	\begin{align*}
		&\tilde{\mathcal{E}}_{1}:=\Big\{(s,k)\in\mathbb{R}^2;\ 0<s<\tfrac12,\  \max\big\{-\tfrac34,s-1\big\}<k<\min\big\{0,4s-\tfrac12\big\}\Bigl\},\\
		&\tilde{\mathcal{E}}_{2}:=\Big\{(s,k)\in\mathbb{R}^2;\ \tfrac14<s<\tfrac12,\  \tfrac12<k<\min\big\{4s-\tfrac12,s+\tfrac12 \big\}\Big\},\\
		&\tilde{\mathcal{E}}_{1_0}:=\Big\{(s,k)\in\mathbb{R}^2;\ \tfrac12<s<1,\  s-1<k < 0\Big\},\\
		&\tilde{\mathcal{E}}_{2_0}:=\Big\{(s,k)\in\mathbb{R}^2;\ \tfrac12<s<1,\  \tfrac12 <k\leq s+\tfrac12\Big\}
	\end{align*}
	and consider the IBVP (\ref{SKe}) with condition \eqref{hipinicial2}, where
	 $$(s,k)\in \tilde{\mathcal{E}}_1\cup \tilde{\mathcal{E}}_2\quad \text{if}\quad \beta\neq 0\quad{and}\quad 
	 (s,k)\in \tilde{\mathcal{E}}_1\cup \tilde{\mathcal{E}}_2\cup \tilde{\mathcal{E}}_{1_0}\cup \tilde{\mathcal{E}}_{2_0}\quad \text{if}\quad \beta= 0.$$
	 Assume in addition that 
	\begin{equation}\label{dadospequenosLHL}
	\|v_0\|_{H^k(\mathbb{R}^-)}+\|g\|_{H^{(k+1)/3}(\mathbb{R}^+)}+\|h\|_{H^{k/3}(\mathbb{R}^+)}\leq \delta
	\end{equation}
    with $\delta$  small enough. Then, there exists a positive time
    	$$
    	T=T\big(\|u_0\|_{H^s(\R^-)},\|v_0\|_{H^k(\R^-)},\|f\|_{H^{(2s+1)/4}(\R^+)},\|g\|_{H^{(k+1)/3}(\R^+)}, \|h\|_{H^{k/3}(\R^+)}\big)
    	$$ 
    	and a local solution $\big(u(\cdot, t),v(\cdot, t)\big)$ , in the distributional sense, such that
	\begin{equation}\label{imagem4}
	\big(u(\cdot, t),v(\cdot, t)\big)\in \mathcal{C}\big([0,T];\; H^{s}(\mathbb{R}^-)\big)\times  \mathcal{C}\big([0,T];\; H^{k}(\mathbb{R}^-)\big).
	\end{equation}
	Moreover, the map $(u_0,v_0,f,g,h)\longmapsto (u(t),v(t))$ is locally Lipschitz-continuous from the space given in \eqref{hipinicial2} into the class  $\mathcal{C}\big([0,T];\; H^{s}(\mathbb{R}^-)\times H^{k}(\mathbb{R}^-)\big)$.
\end{theorem}

\begin{figure}[h]
	\begin{tikzpicture}[scale=3]
	\draw[->] (-0.5,0)--(1.5,0) node[below] {$\boldsymbol{s}\; {\scriptstyle (NLS)}$};
	\draw[->] (0,-1)--(0,1.7) node[right] {$\boldsymbol{k}\; {\scriptstyle(KdV)}$};
	\filldraw[color=gray!30](1,0.5)--(1,1.5)--(0.5,1)--(0.5,0.5)--(1,0.5);
	\filldraw[color=gray!50](0.5,0.5)--(0.5,1)--(1/3,5/6)--(0.25,0.5)--(0.5,0.5);
	\filldraw[color=gray!50](1/8,0)--(0.5,0)--(0.5,-0.5)--(0.25,-3/4)--(0,-3/4)--(0,-0.5)--(1/8,0);
	\filldraw[color=gray!30](0.5,0)--(1,0)--(0.5,-0.5)--(0.5,0);
	\draw[thick, dashed](0,-0.75)--(0.25,-0.75)--(1,0)--(1,1.5)--(0.5,1)--(0.5,0.5);
	\draw[thick, dashed](0.5,1)--(1/3,5/6)--(0,-0.5);
	\draw[thick, dashed](0.5,-0.5)--(0.5,0.5);
	\draw[thick, dashed](0.5,0.5)--(1,0.5);
	\draw[thick, dashed](0,-3/4)--(0,-0.5);
	\draw[thick, dashed](0.5,0.5)--(0.25,1/2);
	\draw[very thin] (1/8,0)--(1,0);
	\node at (0.35,0.25){$\boldsymbol{\scriptstyle \mathcal{E}}$};
	\node at (0.75,0.35){$\boldsymbol{\scriptstyle \mathcal{E}_0}$};
	\node at (0.75,0.2){$\boldsymbol{\scriptstyle (\beta=0)}$};
	\node at (0.38,0.7){$\boldsymbol{\scriptstyle \tilde{\mathcal{E}}_2}$};
	\node at (0.75,1.0){$\boldsymbol{\scriptstyle \tilde{\mathcal{E}}_{2_0}}$};
	\node at (0.75,0.8){$\boldsymbol{\scriptstyle (\beta=0)}$};
	\node at (0.35,-0.25){$\boldsymbol{\scriptstyle \tilde{\mathcal{E}}_1}$};
	\node at (0.63,-0.22){$\boldsymbol{\scriptstyle \tilde{\mathcal{E}}_{1_0}}$};
	\node at (0.7,-0.07){$\boldsymbol{\scriptstyle (\beta=0)}$};
	\node at (1.05,-0.07){$1$};
	\node at (1.05,0.5){$\frac{1}{2}$};
	\node at (-0.1,-3/4){$-\frac{3}{4}$};
	\node at (0.67,1.32)[rotate=45]{$\boldsymbol{\scriptstyle k=s+1/2}$};
	\node at (0.15,0.4)[rotate=75.96375653]{$\boldsymbol{\scriptstyle k=4s-1/2}$};
	\node at (0.71,-0.44)[rotate=45]{$\boldsymbol{\scriptstyle k=s-1}$};
	\end{tikzpicture}
	\caption{{\small Regions of local well-posedness achieved in Theorems \ref{teorema2} and \ref{teorema22}. For the regions highlighted in gray the local theory is developed with smallness assumption on the data for the KdV-component of the system.}}
	\label{Figura-2}
\end{figure}
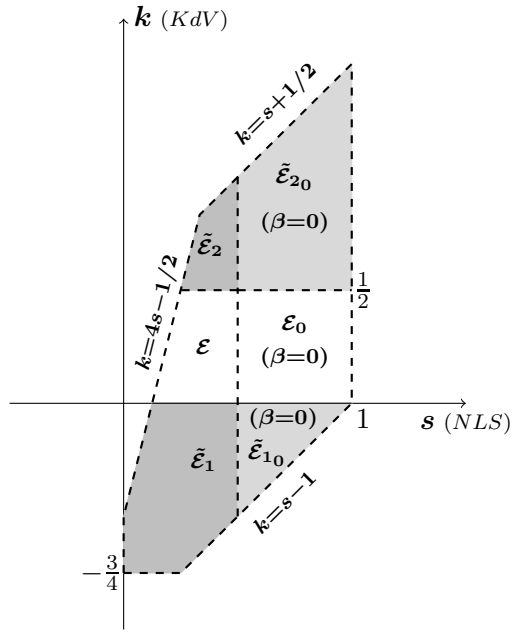
\smallskip

\begin{remark}
	Unlike the right-half line, for the left-half line the results achieved in Theorem \ref{teorema22} do not include the regularity $L^2(\R^-) \times H^{-\frac{3}{4}+}(\R^-)$. Indeed, in this case is needed the use of some modified Bourgain spaces where the bilinear estimate given in Proposition \ref{estimativau}  does not cover the regularity $s=0$ for the NLS component. 
\end{remark}

\subsection{Comments about uniqueness.} 
In this work we have uniqueness of the solutions in the sense of Kato (see [17]), that is,  uniqueness for a reformulation of the IBVPs \eqref{SK} and \eqref{SKe} as integral equations posed on the line $\R$, respectively. As there are many ways to
transform the IBVP into an integral equation, we do not have uniqueness in the strong sense. An approach to solve the question of unconditional uniqueness for IBVPs associated to nonlinear dispersive equations on the half-line was introduced by Bona, Sun and
Zhang in \cite{Bona}. These authors used the boundary integral method based on the Laplace transform to get the local theory for KdV equation on the half-line; for more details about this approach we refer the works \cite{Bona0} and \cite{Bona} concerning to the KdV equation and also the works \cite{Bona1} and  \cite{Tzirakis} concerning to the Schr\"odinger equation. We emphasize that in \cite{Bona} was introduced a concept of \textit{mild solutions} to solve the question of unconditional uniqueness for IBVP associated to the KdV equation on the half-line in the following systematic way: mild solutions are solutions that can be approximated by regular solutions, then they showed that the weak solutions obtained are also mild solutions and finally they proved the uniqueness of the mild solutions. In this work, we are not able to apply this approach, since we do not work in Sobolev spaces with high regularity, then we can not consider yet the existence of mild solutions with appropriated regularity. Local results in high regularity can not be obtained directly, since we have two difficulties: firstly, the boundary forcing operators are not well adapted for high regularities, as pointed out in the Lemmas \ref{edbf} and \ref{cof}; secondly, the bilinear estimates for the coupled terms also are very delicates even in high regularity. 
We are currently studying the global dynamic of solutions for the system \eqref{SK} and \eqref{SKe} in Sobolev spaces with high regularity.  In this forthcoming work we are using the boundary integral method combined with the bilinear estimates in high regularity and the evolution of the some classical conserved functional associated to the system.

\subsection{About the technique} The approach used to proof the main results is based on the arguments developed in \cite{Cavalcante}, \cite{CK},  \cite{Holmer} and \cite{Holmerkdv}. The main idea to solve the IBVP \eqref{SK} - \eqref{hipinicial1} is the construction of an auxiliary forced IVP in the line $\R$, analogous to the \eqref{SKpuro}; more precisely:
\begin{equation}\label{SKforcante}
	\begin{cases}
		iu_t+u_{xx}=\alpha uv+\beta |u|^2u+\mathcal{T}_1(x)h_1(t), & (x,t)\in\mathbb{R}\times(0,T),\\
		v_t+v_{xxx}+ vv_x=\gamma(|u|^2)_x+\mathcal{T}_2(x)h_2(t), & (x,t)\in\mathbb{R}\times(0,T),\\
		u(x,0)=\tilde{u}_0(x),\ v(x,0)=\tilde{v}_0(x), & x\in\R,
	\end{cases}
\end{equation}
where $\mathcal{T}_1$ e $\mathcal{T}_2$ are distributions supported in $\mathbb{R}^-$,  $\tilde{u}_0,\ \tilde{v}_0$ are nice extensions of  $u_0$ and $v_0$ in $\mathbb{R}$ and the
boundary forcing functions $h_1$, $h_2$ are selected to ensure that 
$$\tilde{u}(0,t)=f(t),\quad \text{and}\quad \tilde{v}(0,t)=g(t)$$ 
for all $t\in (0,T)$.

Upon constructing the solution $(\tilde{u}, \tilde{v})$ of IVP (\ref{SKforcante}), we obtain a distributional solution of IBVP (\ref{SK}) by restriction, that is $$(u,v)=\Big(\tilde{u}|_{\mathbb{R}^+\times (0,T)},\; \tilde{v}|_{\mathbb{R}^+\times (0,T)}\Big).$$
In the same way, to construct solutions  for the IBVP \eqref{SKe}-\eqref{hipinicial2} we need to solve the forced IVP
\begin{equation}\label{SKforcantee}
\begin{cases}
iu_t+u_{xx}=\alpha uv+\beta |u|^2u + \mathcal{T}_1(x)h_1(t),& (x,t)\in\mathbb{R}\times(0,T),\\
v_t+v_{xxx}+ vv_x=\gamma(|u|^2)_x + \mathcal{T}_2(x)h_2(t)+\mathcal{T}_3(x)h_3(t),& (x,t)\in\mathbb{R}\times(0,T),\\
u(x,0)=\tilde{u}_0(x),\ v(x,0)=\tilde{v}_0(x),& x\in\R,
\end{cases}
\end{equation}
where $\mathcal{T}_1$, $\mathcal{T}_2$ and $\mathcal{T}_3$ are distributions supported in $\mathbb{R}^+$,  $\tilde{u}_0,\ \tilde{v}_0$ are nice extensions of $u_0$ and $v_0$ in $\mathbb{R}$ and the  boundary forcing functions $h_1$, $h_2$ and $h_3$ are selected to ensure that 
$$\tilde{u}(0,t)=f(t),\quad \tilde{v}(0,t)=g(t),\quad \tilde{u}_x(0,t)=h(t)$$ 
for all $t\in (0,T)$.

\smallskip 
For both IVPs, \eqref{SKforcante} and \eqref{SKforcantee}, the solution is constructed using the classical Fourier restriction norm method used in \cite{CL, YW}  and the inversion of a Riemann-Liouville fractional integration operator. An important point in this work is that the Boundary forcing operators force us to work with Bourgain spaces $X^{s,b}$ and $Y^{k,b}$, associated for the Schr\"odinger and KdV equations, with $b<\frac{1}{2}$, then we need to obtain the nonlinear estimates for $b<\frac{1}{2}$, while in \cite{CL} and \cite{YW} the nonlinear estimates were obtained for $b>\frac{1}{2}$.

\begin{remark}
Unlike the single NLS and KdV equations, systems \eqref{SKforcante} and \eqref{SKforcantee} are not invariant under scaling, nevertheless we eliminate the smallness conditions on the initial data making  some refined localized time estimates. This was only possible for regularity of the initial data in the regions $\mathcal{D}$, $\mathcal{D}_0$, $\mathcal{E}$ and $\mathcal{E}_0$ indicated in figures \ref{Figura-1} and \ref{Figura-2}. For the remaining regularities where the local theory is established we were only able to remove the smallness assumption on the initial datum of the NLS component, the smallness assumption on the initial datum of the KdV component being still necessary. 
\end{remark}

\subsection{Structure of the paper}
This work is organized as follow: in the next section, we discuss some notations, introduce some important function spaces and review the definition and basic properties of the Riemann-Liouville fractional integral. Sections \ref{sectionlinear} and \ref{sectionduhamel} are devoted to summarize preliminary results about the free propagators of the single equations and also in these sections are established some useful estimates for the Duhamel boundary forcing operators and for the classes of the Duhamel boundary forcing operators associated to linear Schr\"odinger and KdV equations. Furthermore, similar estimates are established for the inhomogeneous solution operators associated to both equations. In Section \ref{nonlinearestimates}, the crucial bilinear estimates will be deduced and, finally, in Section \ref{prova-teoremas} is exhibited the proof of the main results about the local theory established in this work for the IBVPs \eqref{SK} and \eqref{SKe}.

\subsection*{Acknowledgments} 
M. Cavalcante  wishes to thank the Centro de Modelamiento Mate\-m\'atico - CMM  from Santiago de Chile, for the financial support and the  good scientific infrastructure
that allowed to conclude the final details of this paper. 

\section{\textbf{Preliminaries}}

Here we introduce some notations and function spaces as well as the  Riemann-Liouville fractional integral operator.

We put $\mathbb{R}^*=\mathbb{R}\setminus \{0\}.$ Throughout the paper the characteristic function of an arbitrary set $A$ is denoted by $\chi_{A}$ and also we fix a cutoff function $\psi \in \mathcal{C}_0^{\infty}(\mathbb{R})$ such that
$$
\psi(t)=
\begin{cases}
1 & \text{if}\; |t|\le 1,\\
0 & \text{if}\; |t|\ge 2,
\end{cases}
$$ 
and for $\delta>0$ we denote $\psi_{\delta}(t)=\frac{1}{\delta}\psi(\frac{t}{\delta})$.

For any real number we put $\langle x \rangle:=1+|x|$ and $f(x,y) \lesssim g(x,y)$ means that there is a constant $c$ such that 
$$f(x,y) \leq c g(x,y)\quad  \text{for all}\quad  (x, y)\in \R^2.$$ 

The classical Schwartz's space is denoted by $\mathscr{S}(\mathbb{R}^n)$ and the spcaes of tempered distributions is denoted by $\mathscr{S}'(\mathbb{R}^n)$. Given a function $\phi\in \mathscr{S}(\mathbb{R})$,\;  $\displaystyle \hat{\phi}(\xi)=\int_{\R} e^{-i\xi x}\phi(x)dx$ denotes the Fourier transform of $\phi$. For $u\in \mathscr{S}(\mathbb{R}^2)$, 
$$\hat{u}(\xi,\tau)=\displaystyle \iint_{\R^2}e^{-i(\xi x+\tau t)}u(x,t)dxdt$$
 denotes its space-time Fourier transform, $\mathscr{F}_{x}u(\xi,t)$  its space Fourier transform and $\mathscr{F}_{t}u(x,\tau)$ its time Fourier transform. 

We define $(\tau-i0)^{\alpha}=\lim\limits_{\gamma\rightarrow 0^{-}}(\tau-\gamma i)^{\alpha}$ in the sense of distributions.

\subsection{Function Spaces}
For $s\geq 0$ we say that $\phi \in H^s(\mathbb{R}^+)$ if exists $\tilde{\phi}\in H^s(\mathbb{R})$ such that 
$\phi=\tilde{\phi}|_{\R+}$.  In this case we set $\|\phi\|_{H^s(\mathbb{R}^+)}:=\inf\limits_{\tilde{\phi}}\|\tilde{\phi}\|_{H^{s}(\mathbb{R})}$. For $s\geq 0$ define $$H_0^s(\mathbb{R}^+)=\Big\{\phi \in H^{s}(\mathbb{R}^+);\,\text{supp} (\phi) \subset[0,+\infty) \Big\}.$$ For $s<0$, define $H^s(\mathbb{R}^+)$ and $H_0^s(\mathbb{R}^+)$  as the dual space of $H_0^{-s}(\mathbb{R}^+)$ and  $H^{-s}(\mathbb{R}^+)$, respectively. 

Define 
$$C_0^{\infty}(\mathbb{R}^+)=\Big\{\phi\in C^{\infty}(\mathbb{R});\, \text{supp}(\phi) \subset [0,+\infty)\Big\}$$
and $C_{0,c}^{\infty}(\mathbb{R}^+)$ as those members of $C_0^{\infty}(\mathbb{R}^+)$ with compact support and also recall that $C_{0,c}^{\infty}(\mathbb{R}^+)$ is dense in $H_0^s(\mathbb{R}^+)$ for all $s\in \mathbb{R}$. 

Finally we observe that a definition for $H^s(\R^-)$ and $H_0^s(\R^-)$ can be given analogous to that for $H^s(\R^+)$ and $H_0^s(\R^+)$.

The following results summarize useful properties of the Sobolev spaces and the proofs can be seen in \cite{CK}.
	
\begin{lemma}\label{sobolevh0}
Let $-\frac{1}{2}<s<\frac{1}{2}$. There is a constant $c_s$ such that
$$\|\chi_{(0,+\infty)}f\|_{H^s(\mathbb{R})}\leq c_s \|f\|_{H^s(\mathbb{R})},$$
for all $f\in H^s(\R).$
\end{lemma}

\begin{lemma}\label{sobolev0}
Let $0\leq s<\frac{1}{2}$. There is a constant $c_{s, \psi}$ such that
\begin{enumerate}
\item [(a)] $\|\psi f\|_{H^s(\mathbb{R})}\leq c_{s, \psi} \|f\|_{\dot{H}^{s}(\mathbb{R})}$,\medskip
\item [(b)] $\|\psi f\|_{\dot{H}^{-s}(\mathbb{R})}\leq c_{s, \psi} \|f\|_{H^{-s}(\mathbb{R})}$,
\end{enumerate}
for all $f$ in the spaces $\dot{H}^{s}(\mathbb{R})$ and $H^{-s}(\mathbb{R})$, respectively. 
\end{lemma}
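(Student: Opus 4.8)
The plan is to establish (a) directly and deduce (b) from it by duality. For (a), the endpoint $s=0$ is trivial, since $\dot{H}^0=L^2$ and $\|\psi f\|_{L^2}\le\|\psi\|_{L^\infty}\|f\|_{L^2}$. For $0<s<1/2$ I would use the comparison $\langle\xi\rangle^{2s}\simeq 1+|\xi|^{2s}$ to split the inhomogeneous norm into a low- and a high-frequency part, $\|\psi f\|_{H^s}^2\simeq\|\psi f\|_{L^2}^2+\|\psi f\|_{\dot{H}^s}^2$, and bound each summand by $\|f\|_{\dot H^s}^2$.

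For the $L^2$-part the key tool is the homogeneous Sobolev embedding $\dot{H}^s(\mathbb{R})\hookrightarrow L^{p}(\mathbb{R})$ with $p=2/(1-2s)$, which holds precisely because $0<s<1/2$ (so that $2<p<\infty$). Since $\psi$ is supported in $[-2,2]$, Hölder's inequality on this compact set gives $\|\psi f\|_{L^2}\le\|\psi\|_{L^\infty}\|f\|_{L^2([-2,2])}\lesssim\|f\|_{L^p}\lesssim\|f\|_{\dot H^s}$.

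For the homogeneous part I would pass to the Gagliardo--Slobodeckij representation $\|g\|_{\dot H^s}^2\simeq\iint_{\mathbb{R}^2}\frac{|g(x)-g(y)|^2}{|x-y|^{1+2s}}\,dx\,dy$ (valid for $0<s<1$) and use the splitting $\psi(x)f(x)-\psi(y)f(y)=\psi(x)\big(f(x)-f(y)\big)+f(y)\big(\psi(x)-\psi(y)\big)$. The contribution of the first term is at most $\|\psi\|_{L^\infty}^2\|f\|_{\dot H^s}^2$. The delicate term, and what I expect to be the main obstacle, is the commutator contribution $\int_{\mathbb{R}}|f(y)|^2 G(y)\,dy$, where $G(y):=\int_{\mathbb{R}}\frac{|\psi(x)-\psi(y)|^2}{|x-y|^{1+2s}}\,dx$. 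A crude estimate only shows $G\in L^\infty$, which would force control by $\|f\|_{L^2}^2$ --- a quantity not available from $\|f\|_{\dot H^s}$. The resolution is to exploit the decay of $G$: splitting the $x$-integral at $|x-y|=1$, using $|\psi(x)-\psi(y)|\lesssim|x-y|$ near the diagonal (where $1-2s>-1$ guarantees integrability) and the compact support of $\psi$ for $|x-y|\ge 1$, one obtains $G(y)\lesssim\langle y\rangle^{-(1+2s)}$, so that $G\in L^{1/(2s)}(\mathbb{R})$. Hölder's inequality with exponents $p/2=1/(1-2s)$ and $(p/2)'=1/(2s)$ then yields $\int|f|^2 G\le\|f\|_{L^{p}}^2\,\|G\|_{L^{1/(2s)}}\lesssim\|f\|_{\dot H^s}^2$, again by the Sobolev embedding. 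This completes (a).

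Finally, (b) would follow from (a) by duality, using that $\dot H^{-s}$ pairs with $\dot H^s$ and $H^{-s}$ with $H^s$. Since $\psi$ is real, for $g\in\dot H^s$ one has $\langle\psi f,g\rangle=\langle f,\psi g\rangle$, so $|\langle\psi f,g\rangle|\le\|f\|_{H^{-s}}\|\psi g\|_{H^s}\lesssim\|f\|_{H^{-s}}\|g\|_{\dot H^s}$ by part (a); taking the supremum over $\|g\|_{\dot H^s}\le1$ gives $\|\psi f\|_{\dot H^{-s}}\lesssim\|f\|_{H^{-s}}$, with the case $s=0$ again immediate.
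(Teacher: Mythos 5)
Your proof is correct. Note that the paper itself gives no argument for this lemma: it is quoted with a pointer to \cite{CK} (Colliander--Kenig), so there is no in-paper proof to compare against line by line. Your argument rests on the same two pillars as the standard one in that reference --- the homogeneous Sobolev embedding $\dot{H}^s(\mathbb{R})\hookrightarrow L^{2/(1-2s)}(\mathbb{R})$ for $0<s<\frac12$ to control the low-frequency ($L^2$) piece, and duality to pass from (a) to (b) --- but you handle the homogeneous seminorm via the Gagliardo--Slobodeckij double integral and a pointwise decay estimate on the commutator kernel $G$, whereas the cited source proceeds on the Fourier side with a fractional Leibniz/commutator estimate. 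Your route is more elementary and self-contained; the decisive steps are all sound: the splitting $\psi(x)f(x)-\psi(y)f(y)=\psi(x)(f(x)-f(y))+f(y)(\psi(x)-\psi(y))$, the bound $G(y)\lesssim\langle y\rangle^{-(1+2s)}$ (which correctly uses both the Lipschitz bound near the diagonal, where $1-2s>-1$, and the compact support of $\psi$ far from it), and the H\"older pairing of $|f|^2\in L^{1/(1-2s)}$ against $G\in L^{1/(2s)}$, which is exactly where the restriction $s<\frac12$ enters. You also correctly isolate $s=0$ before invoking the Gagliardo--Slobodeckij equivalence, whose constants degenerate there.
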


\begin{lemma}\label{alta}
Let $\frac{1}{2}<s<\frac{3}{2}$. Then we have $H_0^s(\R^+)=\big\{f\in H^s(\R^+);f(0)=0\big\}$ and there is a constant $c_s$ such that 
$$\|\chi_{(0,+\infty)}f\|_{H_0^s(\R^+)}\leq c_s\|f\|_{H^s(\R^+)},$$
for all $f\in H^s(\R^+)$.
\end{lemma}

\begin{lemma}\label{cut}
Let $f\in  H_0^s(\mathbb{R}^+)$ with $s\in \R$. Then, there exists  a constant $c_{s, \psi}$ such that 
$$\|\psi f\|_{H_0^s(\mathbb{R}^+)}\leq c_{s, \psi} \|f\|_{H_0^s(\mathbb{R}^+)}.$$
\end{lemma}

Now consider the dispersive IVP of the form
 \begin{equation}\label{Abstrato}
 \begin{cases}
 iw_t-\phi(-i\partial_x)w=F(w)& \text{for}\quad (x,t)\in\mathbb{R}\times(0,T),\\
 w(x,0)=w_0(x)&\text{for}\quad x\in\mathbb{R},
 \end{cases}
 \end{equation}
 where $F$ is a nonlinear function, $\phi$ is a measurable real-valued function and  $\phi(-i\partial_x)$ is the multiplier operator by $\phi(\xi)$ via Fourier transform, that is,
 $$\big[\phi(-i\partial_x)w\big]^{\wedge}(\xi):= \phi(\xi)\widehat{w}(\xi).$$
 The corresponding integral formulation is given by
 \begin{equation*}
 w(t)=W_{\phi}(t)w_0-i\int_0^tW_{\phi}(t-t')F(w(t'))dt',
 \end{equation*}
 where $W_{\phi}(t)=e^{-it\phi(-i\partial_x)}$ is the unitary group that solves the linear part of (\ref{Abstrato}). We denote by 
 $X^{s,b}(\phi)$ the so called Bourgain space associated to \eqref{Abstrato}; more precisely, $X^{s,b}(\phi)$  is the completion of $\mathscr{S}'(\mathbb{R}^2)$ with respect to the norm
 \begin{equation*}\label{Bourgain-norm}
 \begin{split}
 \|w\|_{X^{s,b}(\phi)}&=\|W_{\phi}(-t)w \|_{H_t^b(\mathbb{R}:H_x^s(\mathbb{R}))}\\ &=\|\langle\xi\rangle^{s}\langle\tau\rangle^{b}\mathscr{F}(e^{it\phi(-i\partial_x)}w(\xi,\tau)\|_{L_{\tau}^2L^2_{\xi}}\\
 &=\!\|\langle\xi\rangle^s\langle\tau+\phi(\xi)\rangle^b\hat{w}(\xi,\tau)\rangle \|_{L_{\tau}^2L^2_{\xi}}.
 \end{split}
 \end{equation*}

Ginibre, Tsutsumi and Velo in \cite{GTV} while establishing local well-posedness results for the Zakharov system showed the following important estimate:
  \begin{lemma}\label{T}
 	Let $-\frac{1}{2}< b'< b\leq 0$\, or\, $0\leq b'<b<\frac{1}{2}$, $w\in X^{s,b}(\phi)$ and $s\in \mathbb{R}$. Then
 	\begin{equation*}
 	\|\psi_{T}w\|_{ X^{s,b'}(\phi)}\leq c T^{b-b'}\|w\|_{ X^{s,b}(\phi)}.
 	\end{equation*}
 \end{lemma}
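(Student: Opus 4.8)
The plan is to strip the estimate down to a one-dimensional statement in the time variable and then prove that statement by interpolating between two endpoint bounds. First I would use that $W_\phi(-t)$ acts only on $x$ while $\psi_T$ acts only on $t$, so that $W_\phi(-t)(\psi_T w)=\psi_T\,W_\phi(-t)w$. Setting $g:=W_\phi(-t)w$, the definition of the Bourgain norm gives $\|\psi_T w\|_{X^{s,b'}(\phi)}=\|\psi_T g\|_{H^{b'}_t(\R;\,\mathcal{H})}$ and $\|w\|_{X^{s,b}(\phi)}=\|g\|_{H^{b}_t(\R;\,\mathcal{H})}$ with $\mathcal{H}=H^s_x(\R)$. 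Hence it suffices to prove the Bochner--Sobolev bound $\|\psi_T g\|_{H^{b'}_t(\R;\mathcal{H})}\lesssim T^{b-b'}\|g\|_{H^{b}_t(\R;\mathcal{H})}$; since all the time estimates below are $L^2$-based, the Hilbert-valued case reduces to the scalar statement $\|\psi_T F\|_{H^{b'}(\R)}\lesssim T^{b-b'}\|F\|_{H^{b}(\R)}$ applied to $G(t):=\|g(t)\|_{\mathcal{H}}$ (the reverse triangle inequality controls the Gagliardo numerator of $G$ by that of $g$). Finally, since $\psi_T$ is real and $(X^{s,b}(\phi))^{*}=X^{-s,-b}(\phi)$ under the natural pairing, with $\langle \psi_T\,\cdot\,,\cdot\rangle=\langle\,\cdot\,,\psi_T\,\cdot\,\rangle$, the range $-\tfrac12<b'<b\le 0$ reduces by duality to the range $0\le b'<b<\tfrac12$ applied to the exponents $-b<-b'$. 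Thus everything comes down to the scalar estimate for $0\le b'<b<\tfrac12$.

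For that scalar estimate I would establish two endpoint bounds. At $b'=0$ I claim $\|\psi_T F\|_{L^2}\lesssim T^{b}\|F\|_{H^b}$: this follows from Hölder with conjugate exponents $\tfrac1{1-2b}$ and $\tfrac1{2b}$, the Sobolev embedding $H^b(\R)\hookrightarrow L^{2/(1-2b)}(\R)$, and the scaling identity $\|\psi_T\|_{L^{1/b}}=T^{b}\|\psi\|_{L^{1/b}}$. At $b'=b$ I claim the uniform-in-$T$ bound $\|\psi_T F\|_{H^b}\lesssim\|F\|_{H^b}$. Here I would use the Gagliardo characterization $\|F\|_{H^b}^2\sim\|F\|_{L^2}^2+\iint |F(t)-F(t')|^2\,|t-t'|^{-1-2b}\,dt\,dt'$ and split $\psi_T(t)F(t)-\psi_T(t')F(t')=\psi_T(t)\bigl(F(t)-F(t')\bigr)+\bigl(\psi_T(t)-\psi_T(t')\bigr)F(t')$. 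The first piece is dominated by $\|\psi\|_{\infty}\,[F]_b$, and the second by $\int |F(t')|^2\,w(t')\,dt'$, where after rescaling $t'=Ts'$ one has $w(t')\lesssim T^{-2b}\langle t'/T\rangle^{-1-2b}$, whose $L^{1/(2b)}$-norm is finite and independent of $T$; a second use of $H^b\hookrightarrow L^{2/(1-2b)}$ then bounds this by $\|F\|_{H^b}^2$.

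With these two endpoints in hand I would interpolate the fixed multiplier $M_T F=\psi_T F$, regarded as a map $H^b\to L^2$ of norm $\lesssim T^{b}$ and a map $H^b\to H^b$ of norm $\lesssim 1$. Since $[L^2,H^b]_{\theta}=H^{\theta b}$, choosing $\theta=b'/b$ gives $\|\psi_T F\|_{H^{b'}}\lesssim T^{b(1-\theta)}\|F\|_{H^b}=T^{b-b'}\|F\|_{H^b}$, and the Hilbert-valued version follows verbatim (or from the scalar one applied to $G$). Combined with the two reductions of the first paragraph, this proves the lemma.

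I expect the genuine difficulty to be the $b'=b$ endpoint, that is, the uniform-in-$T$ boundedness of multiplication by $\psi_T$ on $H^b$. This is the only place where the restriction $b<\tfrac12$ is truly used: both the embedding $H^b\hookrightarrow L^{2/(1-2b)}$ and the finiteness of $\|w\|_{L^{1/(2b)}}$ degenerate as $b\uparrow\tfrac12$. It is also essential to obtain the \emph{uniform} bound rather than the easy but lossy estimate $\|\psi_T F\|_{H^b}\lesssim T^{-b}\|F\|_{H^b}$ coming from letting the difference quotient fall on $\psi_T$; interpolating against that lossy endpoint would only yield the weaker power $T^{b-2b'}$, so extracting the sharp exponent $T^{b-b'}$ hinges precisely on proving the $T$-independent endpoint.
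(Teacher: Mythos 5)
The paper does not actually prove this lemma: it is quoted from Ginibre--Tsutsumi--Velo \cite{GTV} with no argument given, so there is no in-paper proof to compare yours against. Your proposal is essentially the standard proof of this folklore estimate, and its main steps all check out: the commutation $W_\phi(-t)(\psi_T w)=\psi_T W_\phi(-t)w$ reducing matters to a one-dimensional estimate in $t$; the duality reduction sending $(b',b)$ with $-\tfrac12<b'<b\le 0$ to $(-b,-b')$ with $0\le -b<-b'<\tfrac12$; the $b'=0$ endpoint via H\"older, $H^b(\R)\hookrightarrow L^{2/(1-2b)}(\R)$ and $\|\psi_T\|_{L^{1/b}}=T^b\|\psi\|_{L^{1/b}}$; the $T$-uniform $b'=b$ endpoint via the Gagliardo splitting, where your weight bound $w(t')\lesssim T^{-2b}\langle t'/T\rangle^{-1-2b}$ and the $L^{1/(2b)}$ computation are correct; and the interpolation $[L^2,H^b]_\theta=H^{\theta b}$ with $\theta=b'/b$. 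Your closing remark that the whole point is the $T$-uniform endpoint (interpolating against the lossy $T^{-b}$ bound would only give $T^{b-2b'}$) is also accurate.

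The one step that is wrong as written is the reduction of the Hilbert-valued estimate to the scalar one via $G(t):=\|g(t)\|_{\mathcal H}$. The reverse triangle inequality gives $\|G\|_{H^b}\lesssim\|g\|_{H^b(\R;\mathcal H)}$, which controls the right-hand side, but you would also need $\|\psi_T g\|_{H^{b'}(\R;\mathcal H)}\lesssim\|\psi_T G\|_{H^{b'}}$, and that inequality is false in general: take $g(t)=e_{n(t)}$ a unit vector jumping between orthonormal directions, so that $G\equiv 1$ is smooth while $g$ has no positive Sobolev regularity. The repair is immediate and standard: expand $g(t)=\sum_k F_k(t)e_k$ in an orthonormal basis of $\mathcal H=H^s_x$ (equivalently, fix $\xi$ and apply the scalar estimate to $\tau\mapsto\hat w(\xi,\tau)$, then integrate in $\xi$); since $\|\psi_T g\|^2_{H^{b'}(\R;\mathcal H)}=\sum_k\|\psi_T F_k\|^2_{H^{b'}}$ and $\|g\|^2_{H^{b}(\R;\mathcal H)}=\sum_k\|F_k\|^2_{H^{b}}$, the scalar bound transfers with the same constant. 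Alternatively, every step of your one-dimensional argument (H\"older, the Sobolev embedding, the Gagliardo splitting, Stein--Weiss interpolation of weighted $L^2$ spaces) is valid verbatim for $\mathcal H$-valued functions. With that substitution the proof is complete.
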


Here we work with the Bourgain spaces associated to the Schr\"odinger and Airy groups and for these spaces we use, respectively, the following notation: $X^{s,b}:=X^{s,b}(\xi^2)$ and $Y^{s,b}:=X^{s,b}(-\xi^3)$.

To obtain our results we also need define the following auxiliary modified Bougain spaces. Let $W^{s,b}$, $U^{s,b}$ and $V^{\alpha}$ the completion of $S'(\R^2)$ with respect to the norms:
\begin{align*}
&\|w\|_{W^{s,b}}=\left(\int\int \langle \tau\rangle^{s} \langle \tau+\xi^2\rangle^{2b} |\widehat{w}(\xi,\tau)|^2d\xi d\tau\right)^{\frac{1}{2}},\\
&\|w\|_{U^{s,b}}=\left(\int\int \langle \tau\rangle^{2s/3} \langle \tau-\xi^3\rangle^{2b} |\widehat{w}(\xi,\tau)|^2d\xi d\tau\right)^{\frac{1}{2}}\\\intertext{and}
&\|w\|_{V^{\alpha}}=\left(\int\int  \langle \tau\rangle^{2\alpha} |\widehat{w}(\xi,\tau)|^2d\xi d\tau\right)^{\frac{1}{2}}.
\end{align*}

\subsection{Riemann-Liouville fractional integral}
The tempered distribution $\frac{t_+^{\alpha-1}}{\Gamma(\alpha)}$ is defined as a locally integrable function for Re $\alpha>0$, that is 
\begin{equation*}
\left \langle \frac{t_+^{\alpha-1}}{\Gamma(\alpha)},\ f \right \rangle:=\frac{1}{\Gamma(\alpha)}\int_0^{+\infty} t^{\alpha-1}f(t)dt.
\end{equation*}

For Re $\alpha>0$, integration by parts implies that
\begin{equation*}
\frac{t_+^{\alpha-1}}{\Gamma(\alpha)}=\partial_t^k\left( \frac{t_+^{\alpha+k-1}}{\Gamma(\alpha+k)}\right)
\end{equation*}
for all $k\in\mathbb{N}$. This expression allows to extend the definition, in the sense of distributions,  of $\frac{t_+^{\alpha-1}}{\Gamma(\alpha)}$ to all $\alpha \in \mathbb{C}$.

Integrating over an appropriate contour yields
\begin{equation}\label{transformada}
\left(\frac{t_+^{\alpha-1}}{\Gamma(\alpha)}\right)^{\widehat{}}(\tau)=e^{-\frac{1}{2}\pi\alpha}(\tau-i0)^{-\alpha},
\end{equation}
where $(\tau-i0)^{-\alpha}$ is the distributional limit. If $f\in C_0^{\infty}(\mathbb{R}^+)$, we define
\begin{equation*}
\mathcal{I}_{\alpha}f=\frac{t_+^{\alpha-1}}{\Gamma(\alpha)}*f.
\end{equation*}
Thus, when Re $\alpha>0$,
\begin{equation*}
\mathcal{I}_{\alpha}f(t)=\frac{1}{\Gamma(\alpha)}\int_0^t(t-s)^{\alpha-1}f(s)ds
\end{equation*} 
and notice that 
$$\mathcal{I}_0f=f,\quad  \mathcal{I}_1f(t)=\int_0^tf(s)ds,\quad \mathcal{I}_{-1}f=f'\quad  \text{and}\quad  \mathcal{I}_{\alpha}\mathcal{I}_{\beta}=\mathcal{I}_{\alpha+\beta}.$$

The following results state important properties of the Riemann-Liouville fractional integral operator. The proof of them can be found in \cite{Holmerkdv}.
\begin{lemma}
If $f\in C_0^{\infty}(\mathbb{R}^+)$, then $\mathcal{I}_{\alpha}f\in C_0^{\infty}(\mathbb{R}^+)$ 
for all $\alpha \in \mathbb{C}$.
\end{lemma}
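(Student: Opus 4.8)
The plan is to reduce the general complex parameter $\alpha$ to the case $\text{Re}\,\alpha>0$, where $\mathcal{I}_\alpha f$ is given by a genuine integral, and then to exploit the semigroup identity $\mathcal{I}_\alpha\mathcal{I}_\beta=\mathcal{I}_{\alpha+\beta}$ together with $\mathcal{I}_{-1}f=f'$ to transfer the conclusion to all $\alpha$.

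First I would fix $\alpha\in\mathbb{C}$ and choose an integer $k\geq 0$ with $\text{Re}(\alpha+k)>0$. Using the semigroup property and $\mathcal{I}_{-1}f=f'$ (so that $\mathcal{I}_{-k}f=f^{(k)}$, by iterating $k$ times), I would write $\mathcal{I}_\alpha f=\mathcal{I}_{\alpha+k}\mathcal{I}_{-k}f=\mathcal{I}_{\alpha+k}\big(f^{(k)}\big)$. Since $f^{(k)}\in C_0^{\infty}(\mathbb{R}^+)$ again, this reduces the whole statement to proving that $\mathcal{I}_\beta g\in C_0^{\infty}(\mathbb{R}^+)$ whenever $g\in C_0^{\infty}(\mathbb{R}^+)$ and $\text{Re}\,\beta>0$.

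Second, for such $\beta$ and $g$, I would start from $\mathcal{I}_\beta g(t)=\frac{1}{\Gamma(\beta)}\int_0^t (t-s)^{\beta-1}g(s)\,ds$ and perform the change of variable $r=t-s$, obtaining
$$\mathcal{I}_\beta g(t)=\frac{1}{\Gamma(\beta)}\int_0^{+\infty} r^{\beta-1}g(t-r)\,dr,$$
where the upper limit may be taken infinite because $g$ vanishes on $(-\infty,0)$, so the integrand is supported in $r\in[0,t]$. The crucial gain is that the singular factor $r^{\beta-1}$ no longer depends on $t$: on any compact $t$-interval $[0,M]$ the $r$-integrand is supported in the fixed compact set $[0,M]$ and dominated by $C\,r^{\text{Re}\,\beta-1}$, which is locally integrable near $r=0$ precisely because $\text{Re}\,\beta>0$. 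Dominated convergence then justifies differentiating under the integral sign arbitrarily often, giving $\partial_t^{\,n}\mathcal{I}_\beta g=\mathcal{I}_\beta\big(g^{(n)}\big)$ for every $n$, whence $\mathcal{I}_\beta g\in C^{\infty}(\mathbb{R})$.

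Finally, for the support I would note that the same representation shows $\mathcal{I}_\beta g(t)=0$ whenever $t<0$ (the integrand vanishes identically there), so $\mathcal{I}_\beta g\in C_0^{\infty}(\mathbb{R}^+)$; applying this to $\beta=\alpha+k$ and $g=f^{(k)}$ yields $\mathcal{I}_\alpha f\in C_0^{\infty}(\mathbb{R}^+)$ for every $\alpha\in\mathbb{C}$. The only delicate point is the interchange of differentiation and integration against the apparently singular kernel $(t-s)^{\beta-1}$ when $0<\text{Re}\,\beta<1$; moving the entire $t$-dependence into the smooth factor $g$ via the change of variables is what makes this routine, and I expect this to be the one step deserving genuine care.
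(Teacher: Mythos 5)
Your argument is correct and coincides with the standard proof of this fact: the paper itself gives no proof but defers to Holmer's KdV paper, where the same reduction is used, namely the genuine integral formula for $\mathrm{Re}\,\alpha>0$ extended to all $\alpha\in\mathbb{C}$ by writing $\mathcal{I}_{\alpha}f=\mathcal{I}_{\alpha+k}\big(f^{(k)}\big)$ (equivalently $\partial_t^{k}\mathcal{I}_{\alpha+k}f$, using that differentiation commutes with convolution). Your change of variables $r=t-s$ together with dominated convergence correctly handles the only delicate point (differentiating through the kernel $(t-s)^{\beta-1}$), and the support statement follows as you say, so there is nothing to add.
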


\begin{lemma}\label{lio-lemaint}
If $0\leq \alpha <\infty$,\, $s\in \mathbb{R}$ and $\varphi \in C_0^{\infty}(\mathbb{R})$, then we have
\begin{align}
&\|\mathcal{I}_{-\alpha}h\|_{H_0^s(\mathbb{R}^+)}\leq c \|h\|_{H_0^{s+\alpha}(\mathbb{R}^+)},\label{lio}\\
\intertext{and}
&\|\varphi \mathcal{I}_{\alpha}h\|_{H_0^s(\mathbb{R}^+)}\leq c_{\varphi} \|h\|_{H_0^{s-\alpha}(\mathbb{R}^+)}.\label{lemaint}
\end{align}
\end{lemma}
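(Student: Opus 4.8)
The plan is to reduce both inequalities to one--dimensional Fourier multiplier estimates in the time variable, exploiting that the Riemann--Liouville kernels $\frac{t_+^{\alpha-1}}{\Gamma(\alpha)}$ are supported in $[0,+\infty)$. If $h$ is supported in $[0,+\infty)$, then so are $\mathcal{I}_{-\alpha}h$ and $\mathcal{I}_{\alpha}h$ (a convolution of two right--supported distributions is right--supported), and hence so is $\varphi\mathcal{I}_{\alpha}h$. For any distribution $g$ supported in $[0,+\infty)$ one has the norm equivalence $\|g\|_{H_0^s(\R^+)}\approx\|g\|_{H^s(\R)}$ (this is the definition for $s\geq0$ and follows by duality for $s<0$; see \cite{CK}), so I may pass freely between $H_0^s$ and $H^s(\R)$ on both sides. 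By \eqref{transformada}, the symbols of $\mathcal{I}_{-\alpha}$ and $\mathcal{I}_{\alpha}$ are $e^{\pi\alpha/2}(\tau-i0)^{\alpha}$ and $e^{-\pi\alpha/2}(\tau-i0)^{-\alpha}$, of magnitude $|\tau|^{\alpha}$ and $|\tau|^{-\alpha}$ respectively.

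For \eqref{lio} this is immediate. Since $h$ and $\mathcal{I}_{-\alpha}h$ are right--supported, I would write $\|\mathcal{I}_{-\alpha}h\|_{H_0^s(\R^+)}\approx\|\langle\tau\rangle^{s}|\tau|^{\alpha}\widehat{h}\|_{L^2}\leq\|\langle\tau\rangle^{s+\alpha}\widehat{h}\|_{L^2}\approx\|h\|_{H_0^{s+\alpha}(\R^+)}$, using only $|\tau|^{\alpha}\leq\langle\tau\rangle^{\alpha}$. No smallness of $\alpha$ and no hypothesis on $s$ are needed, because the multiplier $|\tau|^{\alpha}$ is harmless at every frequency.

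The real content is \eqref{lemaint}, where the symbol $|\tau|^{-\alpha}$ is singular at $\tau=0$; this is exactly why the cutoff $\varphi$ is indispensable. Reducing as above to $\|\varphi\mathcal{I}_{\alpha}h\|_{H^s(\R)}\lesssim\|h\|_{H^{s-\alpha}(\R)}$, I would first treat $0\leq\alpha<\tfrac12$ by splitting $h=h_{\mathrm{lo}}+h_{\mathrm{hi}}$ according to $|\tau|\leq1$ and $|\tau|>1$. On the high piece $|\tau|^{-\alpha}\approx\langle\tau\rangle^{-\alpha}$, so $\mathcal{I}_{\alpha}$ gains $\alpha$ derivatives, giving $\|\mathcal{I}_{\alpha}h_{\mathrm{hi}}\|_{H^s}\lesssim\|h\|_{H^{s-\alpha}}$, and multiplication by the Schwartz function $\varphi$ is bounded on $H^s$. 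For the low piece, $\widehat{\mathcal{I}_{\alpha}h_{\mathrm{lo}}}$ is supported in $|\tau|\leq1$; writing $\widehat{\varphi\mathcal{I}_{\alpha}h_{\mathrm{lo}}}=\tfrac{1}{2\pi}\,\widehat{\varphi}*\widehat{\mathcal{I}_{\alpha}h_{\mathrm{lo}}}$, using the Peetre bound $\langle\tau\rangle^{s}\lesssim\langle\tau-\sigma\rangle^{|s|}$ for $|\sigma|\leq1$ together with the rapid decay of $\widehat{\varphi}$, Young's inequality reduces matters to $\big\||\tau|^{-\alpha}\chi_{|\tau|\leq1}\widehat{h}\big\|_{L^1_\tau}\lesssim\|h\|_{H^{s-\alpha}}$, which follows from Cauchy--Schwarz precisely because $|\tau|^{-\alpha}\in L^2(|\tau|\leq1)$ when $\alpha<\tfrac12$. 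Beyond the support reduction above, this range requires no further structure of $h$.

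For $\alpha\geq\tfrac12$ the weight $|\tau|^{-\alpha}$ ceases to be locally square integrable, and the half--line support of $h$ becomes essential: this is the main obstacle. I would reduce to the previous range via the semigroup identity $\mathcal{I}_{\alpha}=\mathcal{I}_{\alpha_1}\cdots\mathcal{I}_{\alpha_n}$ with each $\alpha_j<\tfrac12$, applying the small--exponent estimate iteratively while keeping every intermediate function right--supported, so that it lies in the relevant $H_0$ space and, when its regularity exceeds $\tfrac12$, automatically carries the vanishing trace of Lemma \ref{alta}. The technical point is that the estimate produces the cutoff $\varphi$ only on the outside, so passing through the iteration forces me to reinsert cutoffs and absorb the resulting tails; here I would invoke Lemma \ref{cut} to control cutoffs in $H_0^s$ and Lemmas \ref{sobolevh0}--\ref{sobolev0} to move between $H^s$, $H_0^s$ and the homogeneous spaces across the $s=\tfrac12$ threshold, which is exactly the Hardy--type input compensating for the singular weight. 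Values $\alpha\geq1$ are reached by peeling off integer parts with \eqref{lio}.
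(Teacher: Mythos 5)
The paper offers no argument for this lemma --- it simply defers to \cite{Holmerkdv} --- so there is no in-paper proof to compare against and I can only assess your proposal on its own terms. Your treatment of \eqref{lio} is correct and is the standard one: the multiplier of $\mathcal{I}_{-\alpha}$ has magnitude $|\tau|^{\alpha}\le\langle\tau\rangle^{\alpha}$ and right-supportedness is preserved under convolution with $t_+^{-\alpha-1}/\Gamma(-\alpha)$. Your proof of \eqref{lemaint} in the range $0\le\alpha<\tfrac12$ is also essentially complete: the high-frequency piece is a genuine gain of $\alpha$ derivatives, and the low-frequency piece is correctly handled by convolving with $\widehat{\varphi}$, Peetre's inequality, Young, and Cauchy--Schwarz using $|\tau|^{-\alpha}\in L^2(|\tau|\le1)$. (I will not press the point that the identification $\|g\|_{H_0^s(\R^+)}\approx\|g\|_{H^s(\R)}$ for right-supported $g$ is delicate at half-integer $s$; that is glossed over throughout this literature.)

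The genuine gap is in the reduction of $\alpha\ge\tfrac12$ to the small-exponent case. Writing $\mathcal{I}_{\alpha}=\mathcal{I}_{\alpha_1}\mathcal{I}_{\alpha_2}$ and applying your estimate to the outer factor leaves you needing $\|\mathcal{I}_{\alpha_2}h\|_{H_0^{s-\alpha_1}(\R^+)}\lesssim\|h\|_{H_0^{s-\alpha_1-\alpha_2}(\R^+)}$ \emph{without} a cutoff, and this is false even for $0<\alpha_2<\tfrac12$: taking $\phi\in C_{0,c}^{\infty}(\R^+)$ with $\int\phi\neq0$ and $h_R(t)=R^{-1/2}\phi(t/R)$, one has $\|h_R\|_{H^{\sigma}(\R)}\lesssim1$ for $\sigma\le0$ while $\|\mathcal{I}_{\alpha_2}h_R\|_{H^{\sigma}(\R)}\gtrsim R^{\alpha_2}\to\infty$, because $\widehat{h_R}$ concentrates at $\tau=0$ where the symbol $|\tau|^{-\alpha_2}$ is singular. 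So ``reinsert cutoffs and absorb the tails'' is precisely the crux, and the lemmas you invoke do not accomplish it: Lemma \ref{cut} inserts a cutoff on the \emph{output} at the same regularity, whereas you need a compactly supported cutoff on the \emph{input} of the outer operator, and Lemmas \ref{sobolevh0}, \ref{sobolev0} and \ref{alta} are likewise beside the point. The missing observation is causality: since both $t_+^{\alpha_1-1}$ and $g:=\mathcal{I}_{\alpha_2}h$ are supported in $[0,+\infty)$, the value $\mathcal{I}_{\alpha_1}g(t)$ for $t\le M$ depends only on $g|_{[0,M]}$, so $\varphi\,\mathcal{I}_{\alpha_1}g=\varphi\,\mathcal{I}_{\alpha_1}(\varphi' g)$ for any $\varphi'\in C_0^{\infty}(\R)$ equal to $1$ on a neighbourhood of $[0,\sup\text{supp}\,\varphi]$; the small-exponent estimate is then applied to the inner factor in the localized form $\|\varphi'\mathcal{I}_{\alpha_2}h\|_{H_0^{s-\alpha_1}(\R^+)}\lesssim\|h\|_{H_0^{s-\alpha_1-\alpha_2}(\R^+)}$ and the induction closes. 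With that one line your iteration works; without it, the step as written fails.
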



\subsection{Elementary 1d-integral estimates}
The next inequalities will be used to estimate the nonlinear terms in Section \ref{nonlinearestimates}.
\begin{lemma}\label{lemanovo-lema2} The following integral estimates are valid:
\begin{align}
&\int_{-\infty}^{+\infty}\frac{dx}{\langle\alpha_0+\alpha_1x+x^2\rangle^b}\leq c\quad
\text{for all}\quad b>\frac{1}{2}\label{lemanovo}\\ \intertext{and}
&\int_{-\infty}^{+\infty}\frac{dx}{\langle\alpha_0+\alpha_1x+\alpha_2x^2+x^3\rangle^b}\leq c\quad
\text{for all}\quad  b>\frac{1}{3},\label{lema2}
\end{align}
where the constants $c$ only depend on $b$. 
\end{lemma}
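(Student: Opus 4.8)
The plan is to prove both integral estimates by the same strategy: bound the integrand below by a quantity that depends only on how far the polynomial in the denominator is from a critical value, and then integrate that model quantity explicitly. Concretely, for \eqref{lemanovo} I would complete the square, writing $\alpha_0+\alpha_1 x+x^2 = (x+\tfrac{\alpha_1}{2})^2 + \big(\alpha_0-\tfrac{\alpha_1^2}{4}\big)$. After the translation $y=x+\tfrac{\alpha_1}{2}$ the integral becomes $\int_{\mathbb R}\langle c_0+y^2\rangle^{-b}\,dy$ with $c_0=\alpha_0-\tfrac{\alpha_1^2}{4}$ a real constant. Since $\langle c_0+y^2\rangle = 1+|c_0+y^2|\geq 1+ y^2 - |c_0|$, one sees that for $|y|$ large the denominator grows like $y^2$, so the integral is dominated by $\int_{\mathbb R}\langle y\rangle^{-2b}\,dy$, which converges precisely when $2b>1$, i.e. $b>\tfrac12$. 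The only subtlety is to make the bound uniform in $c_0$ (equivalently, uniform in $\alpha_0,\alpha_1$), which is the content of the claim that $c$ depends only on $b$.

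The clean way to secure that uniformity is a change-of-variables / splitting argument. I would split the $y$-integral into the region where $|c_0+y^2|\leq 1$ and its complement. On the first region, the integrand is bounded by $1$, and the region is a union of at most two intervals each of length $O(1)$ (the set where a quadratic lies in an interval of length $2$ has bounded measure, uniformly in $c_0$, by the inverse-function estimate $|dy|\sim |d(y^2)|/|y|$), so its contribution is $O(1)$. On the complement, $\langle c_0+y^2\rangle = 1+|c_0+y^2|$ and one uses the elementary fact that $|c_0+y^2|\gtrsim \langle y\rangle^2$ once $|y|$ exceeds a threshold comparable to $\sqrt{|c_0|}$, together with a scaling substitution that renders the bound independent of $c_0$. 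Summing the two contributions yields a constant depending only on $b$.

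For \eqref{lema2} the same philosophy applies with a cubic $P(x)=\alpha_0+\alpha_1 x+\alpha_2 x^2+x^3$ in place of the quadratic, and the exponent threshold drops to $b>\tfrac13$ because the leading behavior of the denominator is now $|x|^3$. The model computation is $\int_{\mathbb R}\langle x\rangle^{-3b}\,dx<\infty \iff 3b>1$. The extra work compared to the quadratic case is controlling the measure of the sublevel set $\{x:|P(x)|\leq 1\}$ uniformly in the coefficients $(\alpha_0,\alpha_1,\alpha_2)$. Here I would invoke a standard sublevel-set estimate for polynomials: for a monic real polynomial of degree $d$, the Lebesgue measure of $\{|P|\leq \lambda\}$ is bounded by $C_d\,\lambda^{1/d}$ uniformly in the lower-order coefficients (this follows from factoring $P(x)=\prod(x-r_j)$ over $\mathbb C$ and noting that $|P|\leq\lambda$ forces $x$ to lie near one of the real parts of the roots, within distance $O(\lambda^{1/d})$). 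With $d=3$, $\lambda=1$ this gives a uniform bound on the bad region, and on the good region one again uses $|P(x)|\gtrsim \langle x\rangle^3$ for $|x|$ large.

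The main obstacle is precisely the uniformity in the coefficients, not the convergence itself; the naive estimate $\int \langle x^2\rangle^{-b}\,dx$ or $\int\langle x^3\rangle^{-b}\,dx$ handles only the case of vanishing lower-order terms, whereas the shift in the location of the minimum of the polynomial (governed by $\alpha_1,\alpha_2$) could a priori worsen the constant. The sublevel-set / completing-the-square argument is what makes the constant depend on $b$ alone. An alternative route, which I would keep in reserve, is to dominate the integrals by the convolution $\langle\cdot\rangle^{-b}*\langle\cdot\rangle^{-b}$ type bounds or to cite the known fact that such polynomial weight integrals are among the standard calculus estimates underpinning the Fourier restriction method (they appear in this exact form in the Bourgain-space literature, e.g. in the works cited for the bilinear estimates), in which case the lemma follows by a direct reference together with the elementary reductions above.
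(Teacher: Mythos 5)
Your strategy is sound and would produce a complete, self-contained proof, but it is a genuinely different route from the paper's: the paper does not prove this lemma at all, it simply defers to \cite{BOP}, where the estimates appear as standard calculus lemmas of the Fourier restriction method (your ``alternative route held in reserve'' is in fact exactly what the authors do). Your argument --- complete the square, reduce to a monic polynomial, bound the measure of the sublevel set $\{|P|\le 1\}$ by $C_d$ uniformly in the lower-order coefficients via the factorization $P=\prod_j(x-z_j)$, and control the complementary region by the decay of $\langle P(x)\rangle^{-b}$ --- is essentially the standard proof that lives inside that reference, so what you buy is self-containedness at the cost of the case analysis. The one spot where your write-up is loose is the complementary region: the bound $|P(x)|\gtrsim\langle x\rangle^{\deg P}$ holds only for $|x|$ beyond a threshold comparable to the size of the roots, so when the roots are large there is an intermediate range (at distance $\ge 1$ from the roots but still of order the roots themselves) that this inequality does not cover. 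The clean repair is either the pointwise bound $|P(x)|\ge\prod_j|x-\operatorname{Re}z_j|$ for real $x$ followed by splitting according to the nearest $\operatorname{Re}z_j$, or the change of variables $u=P(x)$ with the Jacobian estimate $|P'(x)|\gtrsim |u-P(x_c)|^{1-1/d}$ near each critical point $x_c$ (this is precisely the mechanism behind Lemma \ref{lema1-Holmerkdv} in the paper); your phrase ``scaling substitution that renders the bound independent of $c_0$'' gestures at this but should be carried out, since it is where the exponents $\tfrac12$ and $\tfrac13$ actually enter. With that detail filled in, the proof is correct and the constants depend only on $b$ as claimed.
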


\begin{proof}
	The details of the proof can be found in  \cite{BOP}. 
\end{proof}

\begin{lemma}\label{lemagtv1}
	Let $0\leq b_1,\ b_2<\frac{1}{2}$ with $b_{1}+b_{2}>\frac{1}{2}$. Then there exists a positive constant $c=c(b_{1}, b_{2})$ such that
	\begin{equation*}
	\int\frac{dy}{\langle y-\alpha\rangle^{2b_{1}}\langle y-\beta\rangle^{2b_{2}}}\leq \frac{c} {\langle \alpha-\beta\rangle^{2b_1+2b_2-1}},
	\end{equation*}
\end{lemma}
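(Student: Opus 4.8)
The plan is to reduce the estimate to a convolution-type integral with two ``centres'' and to split the real line according to the distance to each centre. First I would translate by $\alpha$: setting $z=y-\alpha$ and $\mu=\beta-\alpha$, the integral becomes $\int\langle z\rangle^{-2b_1}\langle z-\mu\rangle^{-2b_2}\,dz$, and since $\langle\mu\rangle=\langle\alpha-\beta\rangle$ the claim reduces to bounding this by $c\,\langle\mu\rangle^{-(2b_1+2b_2-1)}$. To lighten the notation I write $a=2b_1$ and $b=2b_2$, so that the hypotheses become $0\le a,b<1$ and $a+b>1$, and the target exponent is $-(a+b-1)$. If $|\mu|\le 1$ the right-hand side is comparable to a constant, and the desired bound is simply the statement that $\int\langle z\rangle^{-a}\langle z-\mu\rangle^{-b}\,dz$ is finite uniformly in such $\mu$; this finiteness is exactly what $a+b>1$ provides, since the integrand decays like $|z|^{-(a+b)}$ at infinity. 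So the substantive case is $|\mu|\ge 1$, treated next.

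For $|\mu|\ge 1$ I would decompose $\mathbb{R}=R_1\cup R_2\cup R_3$ with $R_1=\{|z|\le|\mu|/2\}$ (near the centre $0$), $R_2=\{|z-\mu|\le|\mu|/2\}$ (near the centre $\mu$), and $R_3$ the complement. On $R_1$ the triangle inequality gives $|z-\mu|\ge|\mu|-|z|\ge|\mu|/2$, hence $\langle z-\mu\rangle\gtrsim\langle\mu\rangle$; pulling this factor out and using $a<1$ to estimate $\int_{|z|\le|\mu|/2}\langle z\rangle^{-a}\,dz\lesssim\langle\mu\rangle^{1-a}$ yields a contribution $\lesssim\langle\mu\rangle^{-b}\langle\mu\rangle^{1-a}=\langle\mu\rangle^{1-a-b}$. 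The region $R_2$ is handled identically after interchanging the roles of the two factors (and of $a,b$), using $b<1$, and contributes $\lesssim\langle\mu\rangle^{1-a-b}$ as well.

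The remaining region $R_3$ is where I expect the only real care to be needed, and it is where $a+b>1$ re-enters to furnish decay rather than mere convergence. On $R_3$ one has simultaneously $|z|>|\mu|/2$ and $|z-\mu|>|\mu|/2$; a short case check (for $\mu>0$ this region is $z>\tfrac32\mu$ together with $z<-\tfrac12\mu$) shows that the two weights are comparable there, $\langle z\rangle\sim\langle z-\mu\rangle$, and that $\langle z\rangle\gtrsim\langle\mu\rangle$. Hence on $R_3$ the integrand is comparable to $\langle z\rangle^{-(a+b)}$, and since $a+b>1$,
\begin{equation*}
\int_{R_3}\frac{dz}{\langle z\rangle^{a}\langle z-\mu\rangle^{b}}\lesssim\int_{|z|\gtrsim\langle\mu\rangle}\frac{dz}{\langle z\rangle^{a+b}}\lesssim\langle\mu\rangle^{1-a-b}.
\end{equation*}
Adding the three contributions gives the asserted bound $\langle\mu\rangle^{1-a-b}=\langle\mu\rangle^{-(2b_1+2b_2-1)}$.

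The main obstacle is organizing $R_3$ so that $a+b>1$ is used both to secure integrability at infinity and to extract the decaying factor $\langle\mu\rangle^{1-a-b}$; in particular the verification that $\langle z\rangle\sim\langle z-\mu\rangle$ throughout $R_3$ is the one elementary but easy-to-botch point. The near-centre regions $R_1,R_2$ are genuinely routine one-variable computations, relying only on $a<1$ and $b<1$ respectively, and the small-$\mu$ case is disposed of by pure convergence. I would emphasize that each of the three hypotheses plays a distinct and necessary role: $a<1$ for $R_1$, $b<1$ for $R_2$, and $a+b>1$ for $R_3$ and for the case $|\mu|\le 1$.
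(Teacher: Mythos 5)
Your proof is correct: the three-region decomposition is sound, the comparability $\langle z\rangle\sim\langle z-\mu\rangle$ on $R_3$ checks out on both components $z>\tfrac32\mu$ and $z<-\tfrac12\mu$, and each hypothesis ($2b_1<1$, $2b_2<1$, $2b_1+2b_2>1$) is used exactly where you say it is. The paper itself offers no proof of this lemma, deferring to Lemma 4.2 of Ginibre--Tsutsumi--Velo, and the argument there is the same standard splitting of the line according to proximity to the two centres, so your write-up is essentially the omitted proof made explicit.
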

\begin{proof}
	See Lemma 4.2 in \cite{GTV}.
\end{proof}

We finish with another versions of the estimates given in Lemma \ref{lemagtv1}. 

\begin{lemma}\label{lema1-Holmerkdv}
The following integral estimate is valid:
\begin{equation*}
\int_{|x|<\beta}\frac{dx}{\langle x\rangle^{4b-1}|\alpha-x|^{\frac{1}{2}}}\leq c\frac{(1+\beta)^{2-4b}}{\langle\alpha\rangle^{\frac{1}{2}}}
\quad \text{for all}\quad b<\frac{1}{2},
\end{equation*}
where the constant $c$ only depends on $b$. 
\end{lemma}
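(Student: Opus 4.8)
The plan is to prove the integral estimate
\[
\int_{|x|<\beta}\frac{dx}{\langle x\rangle^{4b-1}|\alpha-x|^{\frac{1}{2}}}\leq c\,\frac{(1+\beta)^{2-4b}}{\langle\alpha\rangle^{\frac{1}{2}}}
\qquad (b<\tfrac12)
\]
by a dyadic/region decomposition of the domain of integration, separating the contribution of the singularity at $x=\alpha$ from the polynomial decay coming from $\langle x\rangle^{4b-1}$. First I would observe that since $b<\tfrac12$ the exponent $4b-1<1$, so $\langle x\rangle^{-(4b-1)}$ is \emph{growing} (or only mildly decaying) in $|x|$, and consequently the factor $(1+\beta)^{2-4b}$ on the right-hand side must be produced by integrating this weight over $|x|<\beta$; meanwhile the $\langle\alpha\rangle^{-1/2}$ must come from the inverse-square-root singularity at $x=\alpha$. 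The natural split is therefore into the region near the singularity, say $|x-\alpha|\le \tfrac12\langle\alpha\rangle$ (where the singular factor dominates and $\langle x\rangle\sim\langle\alpha\rangle$), and the region away from it, $|x-\alpha|>\tfrac12\langle\alpha\rangle$ (where $|\alpha-x|^{-1/2}$ is harmless).

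In the region away from the singularity I would bound $|\alpha-x|^{-1/2}\lesssim \langle\alpha\rangle^{-1/2}$ and reduce to estimating $\int_{|x|<\beta}\langle x\rangle^{1-4b}\,dx$. Since $1-4b>-1$ the integrand is integrable at infinity only up to the cutoff, and a direct computation gives $\int_{|x|<\beta}\langle x\rangle^{1-4b}\,dx\lesssim (1+\beta)^{2-4b}$ (using $1-4b+1=2-4b>0$, so the antiderivative scales like $\beta^{2-4b}$); this yields exactly the claimed bound $(1+\beta)^{2-4b}\langle\alpha\rangle^{-1/2}$. In the region near the singularity I would use that there $\langle x\rangle\sim\langle\alpha\rangle$, so $\langle x\rangle^{1-4b}\sim\langle\alpha\rangle^{1-4b}$, and I am left with the local singular integral $\int_{|x-\alpha|\le \frac12\langle\alpha\rangle}|\alpha-x|^{-1/2}\,dx\lesssim \langle\alpha\rangle^{1/2}$, since $\int_0^{R}r^{-1/2}\,dr\sim R^{1/2}$. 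Multiplying, the near-singularity contribution is controlled by $\langle\alpha\rangle^{1-4b}\cdot\langle\alpha\rangle^{1/2}=\langle\alpha\rangle^{3/2-4b}$, which I would then need to compare against the target $(1+\beta)^{2-4b}\langle\alpha\rangle^{-1/2}$.

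The main obstacle, and the step requiring the most care, is reconciling the two regimes: the naive near-singularity bound $\langle\alpha\rangle^{3/2-4b}$ is only acceptable when $\langle\alpha\rangle\lesssim 1+\beta$, because on the domain $|x|<\beta$ containing the singularity one necessarily has $|\alpha|\lesssim \beta$ (otherwise the singularity lies outside the region of integration and that contribution is empty). Thus I would first note that the near-singularity region is nonempty only when $|\alpha|\lesssim\beta$, in which case $\langle\alpha\rangle^{3/2-4b}=\langle\alpha\rangle^{2-4b}\langle\alpha\rangle^{-1/2}\lesssim (1+\beta)^{2-4b}\langle\alpha\rangle^{-1/2}$, matching the desired bound. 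This case analysis on the relative sizes of $\langle\alpha\rangle$ and $1+\beta$ is the crux of the argument; once it is handled, adding the two regional estimates gives the result, with the constant $c$ depending only on $b$ through the elementary one-dimensional integrals. Throughout I would keep the constants uniform by treating the degenerate case $\beta$ small (where the right-hand side is $\gtrsim \langle\alpha\rangle^{-1/2}$ and the whole integral is trivially bounded) separately if needed.
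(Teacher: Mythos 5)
Your argument is correct and complete: the split into the near region $|x-\alpha|\le\frac12\langle\alpha\rangle$ (where $\langle x\rangle\sim\langle\alpha\rangle$, the square-root singularity integrates to $c\langle\alpha\rangle^{1/2}$, and the region meets $\{|x|<\beta\}$ only when $\langle\alpha\rangle\le 2(1+\beta)$, which converts $\langle\alpha\rangle^{2-4b}$ into $(1+\beta)^{2-4b}$) and the far region (where $|\alpha-x|^{-1/2}\lesssim\langle\alpha\rangle^{-1/2}$ and $\int_{|x|<\beta}\langle x\rangle^{1-4b}dx\le\frac{2}{2-4b}(1+\beta)^{2-4b}$ by direct integration, using $2-4b>0$) gives exactly the claimed bound with a constant depending only on $b$. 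The paper gives no proof of its own, simply citing Lemma 5.13 of Holmer's KdV paper, where the estimate is obtained by essentially the same elementary near/far case analysis, so your proposal matches the intended argument.
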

\begin{proof}
See Lemma 5.13 in \cite{Holmerkdv}.
\end{proof}

\section{\textbf{Linear Versions for $\mathbb{R}^+$ and $\mathbb{R}^-$}}\label{sectionlinear}

In this section we shall obtain explicit solutions for the following linear versions of the IBVP associated to the liner part of the system; more precisely, we study the IBVP for linear Schr\"odinger equation:
\begin{equation}\label{jis1}
\begin{cases}
iu_t(x,t)+u_{xx}(x,t)=0& \text{for}\quad (x,t)\in\R^+\times (0,+\infty),\\
u(x,0)=u_0(x)& \text{for}\quad x\in \R^+,\\
u(0,t)=f(t)& \text{for}\quad t>0
\end{cases}
\end{equation}
and 
\begin{equation}\label{jis1-l}
	\begin{cases}
		iu_t(x,t)+u_{xx}(x,t)=0& \text{for}\quad (x,t)\in \R^-\times(0,+\infty),\\
		u(x,0)=u_0(x)& \text{for}\quad x\in \R^-,\\
		u(0,t)=f(t)& \text{for}\quad t>0.
	\end{cases}
\end{equation}
For linear version of KdV equation we study the following ones:
\begin{equation}\label{jis2}
\begin{cases}
v_t(x,t)+v_{xxx}(x,t)=0& \text{for}\quad  (x,t)\in \R^+ \times (0,+\infty),\\
v(x,0)=v_0(x)& \text{for}\quad x\in \R^+,\\
v(0,t)=f(t)& \text{for}\quad t>0
\end{cases}
\end{equation}
and 
\begin{equation}\label{jis3}
\begin{cases}
v_t(x,t)+v_{xxx}(x,t)=0& \text{for}\quad (x,t)\in \R^-\times(0,+\infty),\\
v(x,0)=v_0(x)& \text{for}\quad x\in \R^-,\\
v(0,t)=f(t)& \text{for}\quad t>0,\\
v_x(0,t)=h(t)& \text{for}\quad t>0.
\end{cases}
\end{equation}
For this purpose, we will use the approach given in \cite{Cavalcante}, \cite{CK}, \cite{Holmer} and \cite{Holmerkdv}, i.e., we solve forced problems in $\mathbb{R}$. More precisely, for the  Schr\"odinger case we solve the problems:
\begin{equation}\label{T2}
\begin{cases}
iu_t+u_{xx}=\mathcal{T}_1(x)h_1(t)& \text{for}\quad (x,t)\in \mathbb{R}\times(0,+\infty),\\
u(x,0)=\tilde{u}_0(x)& \text{for}\quad x\in\mathbb{R},
\end{cases}
\end{equation}
\begin{equation}\label{T2-l}
\begin{cases}
iu_t+u_{xx}=\mathcal{T}_2(x)h_2(t)& \text{for}\quad (x,t)\in \mathbb{R}\times(0, +\infty),\\
u(x,0)=\tilde{u}_0(x)& \text{for}\quad x\in\mathbb{R}
\end{cases}
\end{equation}
and for KdV the following ones:
\begin{equation}\label{T1}
\begin{cases}
v_t+v_{xxx}=\mathcal{T}_3(x)h_3(t)& \text{for}\quad (x,t)\in \mathbb{R}\times(0,+\infty),\\
v(x,0)=\tilde{v}_0(x)& \text{for}\quad x\in\mathbb{R},
\end{cases}
\end{equation}
\begin{equation}\label{T3}
\begin{cases}
v_t+ v_{xxx}=\mathcal{T}_4(x)h_4(t)+\mathcal{T}_5(x)h_5(t)& \text{for}\quad (x,t)\in \mathbb{R}\times(0,+\infty),\\
v(x,0)=\tilde{v}_0(x)& \text{for}\quad x\in\mathbb{R},
\end{cases}
\end{equation}
where $\tilde{u}_0$ and $\tilde{v}_0$ are nice extensions of $u_0$ and $v_0$ in $\mathbb{R}$, $\mathcal{T}_1$ and $\mathcal{T}_3$  are distributions supported in $\mathbb{R}^-$, 
$\mathcal{T}_2$, $\mathcal{T}_4$ and $\mathcal{T}_5$ are distributions supported in $\R^+$  and  $h_i\; (i=1,\dots,5)$ are suitable boundary forced functions.
Consequently, the solutions of IBVPs \eqref{jis1}, \eqref{jis1-l}, \eqref{jis2} and \eqref{jis3} are given, respectively, by the restriction of the solutions 
obtained for the problems \eqref{T2}, \eqref{T2-l}, \eqref{T1} and \eqref{T3}.

\begin{remark}
 The technique introduced  by Colliander and Kenig in \cite{CK} to solve the IVP \eqref{T1} in the context of generalized-KdV equation  was based on the delta-distribution $\delta_0(x)$.  Holmer \cite{Holmer} studied the IBVP \eqref{T2}  also using the delta-distribution in order to study the IBVP associated to nonlinear schr\"odinger equation and later, in \cite{Holmerkdv}, he solved the IVPS \eqref{T1} and \eqref{T3} by considering a more general family of distributions, namely $\frac{x_{-}^{\lambda-1}}{\Gamma(\lambda)}$ and $\frac{x_{+}^{\lambda-1}}{\Gamma(\lambda)}$ with $\lambda \in \mathbb{C}$, which  was necessary to obtain results in low regularity for the IBVP associated to the KdV equation on the half-line. Recently, following the Holmer's ideas, Cavalcante \cite{Cavalcante} solved the linear IBVP \eqref{jis1} by using the distributions $\frac{x_{-}^{\lambda-1}}{\Gamma(\lambda)}$, with $\lambda \in \mathbb{C}$, in order to solve IBVP associated to some quadratic nonlinear Schr\"odinger equations for data with low regularity.
\end{remark}

\subsection{Linear estimates for the free propagator of the Schr\"odinger equation}
We define the linear group 
$$e^{it\partial_x^2}:\mathscr{S}'(\mathbb{R})\rightarrow \mathscr{S}'(\mathbb{R})$$
associated to the linear Schr\"odinger equation as
\begin{equation*}
	e^{it\partial_x^2}\phi(x)=\big(e^{-it\xi^2}\widehat{\phi}(\xi)\big)^{\lor}(x),
\end{equation*}
so that 
\begin{equation}\label{linear}
\begin{cases}
		(i\partial_t+\partial_x^2)e^{it\partial_x^2}\phi(x) =0,& (x,t)\in\mathbb{R}\times\mathbb{R},\\
		e^{it\partial_x^2}\phi(x)\big|_{t=0}=\phi(x),& x\in\mathbb{R}.
\end{cases}
\end{equation}

\begin{lemma}\label{grupo}
	Let $s\in\mathbb{R}$ and  $0< b<1$. If $\phi\in H^s(\mathbb{R})$, then
	\begin{enumerate}
		\item[(a)] $\|e^{it\partial_x^2}\phi(x)\|_{\mathcal{C}\big(\mathbb{R}_t;\,H^s(\mathbb{R}_x^+)\big)}\leq c\|\phi\|_{H^s(\mathbb{R})}$ (\textbf{space traces}),\medskip 
		\item[(b)] $\|\psi(t) e^{it\partial_x^2}\phi(x)\|_{\mathcal{C}\big(\mathbb{R}_x;\,H^{(2s+1)/4}(\mathbb{R}_t)\big)}\leq c \|\phi\|_{H^s(\mathbb{R})}$ (\textbf{time traces}),\medskip
		\item [(c)]$\|\psi(t)e^{it\partial_x^2}\phi(x)\|_{X^{s,b}}\leq c\|\psi(t)\|_{H^1(\mathbb{R})} \|\phi\|_{H^s(\mathbb{R})}$
		(\textbf{Bourgain spaces}).
	\end{enumerate}
\end{lemma}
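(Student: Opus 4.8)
The plan is to prove the three estimates of Lemma \ref{grupo} by working directly on the Fourier side, exploiting the explicit symbol $e^{-it\xi^2}$ of the group $e^{it\partial_x^2}$. These are the classical Kato smoothing and Bourgain-space estimates for the Schr\"odinger group, and the strategy is to reduce each to a one-dimensional calculation and invoke the integral bounds of Lemma \ref{lemanovo-lema2} together with the elementary Sobolev properties recorded earlier.

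For part (a), the \textbf{space traces} estimate, the key observation is that $e^{it\partial_x^2}$ is a unitary group on $H^s(\mathbb{R})$ for each fixed $t$, since multiplication by $e^{-it\xi^2}$ preserves the weight $\langle\xi\rangle^s$ in the $L^2_\xi$ norm. Thus $\|e^{it\partial_x^2}\phi\|_{H^s(\mathbb{R})}=\|\phi\|_{H^s(\mathbb{R})}$ for every $t$, and restricting to $\mathbb{R}^+$ only decreases the norm. Continuity in $t$ with values in $H^s(\mathbb{R}_x^+)$ follows from dominated convergence applied to the Fourier representation, so the supremum over $t$ is controlled by $\|\phi\|_{H^s(\mathbb{R})}$; this step is routine.

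For part (b), the \textbf{time traces} estimate, I would fix $x$ and compute the time Fourier transform of $\psi(t)e^{it\partial_x^2}\phi(x)$. Writing $e^{it\partial_x^2}\phi(x)=\frac{1}{2\pi}\int e^{ix\xi}e^{-it\xi^2}\widehat{\phi}(\xi)\,d\xi$, the cutoff $\psi$ convolves in $\tau$ with $\widehat\psi$, and after a change of variables the $H^{(2s+1)/4}_t$ norm reduces to controlling $\int\langle\tau\rangle^{(2s+1)/2}|\mathcal{F}_t(\psi e^{it\partial_x^2}\phi)(x,\tau)|^2\,d\tau$ uniformly in $x$. The main point is the algebraic relation that on the characteristic set $\tau\approx-\xi^2$ one has $\langle\tau\rangle^{(2s+1)/4}\sim\langle\xi\rangle^{(2s+1)/2}\sim\langle\xi\rangle^{s}\langle\xi\rangle^{1/2}$, and the extra $\langle\xi\rangle^{1/2}$ is absorbed by the Jacobian $d\xi=\frac{d(\xi^2)}{2|\xi|}$ coming from the change of variable $\tau=-\xi^2$. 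This is precisely the smoothing gain of half a derivative in time per unit of $\langle\xi\rangle$; I would split into $|\xi|\lesssim1$ and $|\xi|\gtrsim1$ to handle the low-frequency contribution where $|\xi|$ in the denominator is harmless because $\psi$ localizes in time and the symbol is smooth.

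For part (c), the \textbf{Bourgain space} estimate, I would use that $\mathcal{F}\big(e^{it\partial_x^2}\phi\big)(\xi,\tau)$ is supported on $\tau=-\xi^2$, so that multiplying by $\psi(t)$ smears this to $\langle\tau+\xi^2\rangle^b\widehat{\psi\,e^{it\partial_x^2}\phi}(\xi,\tau)\sim\langle\tau+\xi^2\rangle^b\widehat\psi(\tau+\xi^2)\widehat\phi(\xi)$; taking the $X^{s,b}$ norm factorizes into $\|\langle\xi\rangle^s\widehat\phi\|_{L^2_\xi}\cdot\|\langle\tau\rangle^b\widehat\psi(\tau)\|_{L^2_\tau}$, the second factor being bounded by $\|\psi\|_{H^1}$ for $b<1$. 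I expect part (b) to be the main obstacle, since it is the genuine smoothing estimate requiring careful bookkeeping of the $\tau=-\xi^2$ change of variables and the separation of frequencies; parts (a) and (c) are essentially the unitarity of the group and a convolution identity, respectively.
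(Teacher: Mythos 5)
Your proposal is correct: part (a) is indeed just unitarity of the group plus the fact that restriction to $\mathbb{R}^+$ does not increase the $H^s$ norm, part (b) is the standard sharp time-trace (Kato smoothing) computation via the change of variables $\tau=-\xi^2$ with the cutoff handling the low-frequency singularity, and part (c) follows from the exact identity $\mathcal{F}\big(\psi(t)e^{it\partial_x^2}\phi\big)(\xi,\tau)=\widehat\psi(\tau+\xi^2)\widehat\phi(\xi)$, which factorizes the $X^{s,b}$ norm. The paper itself gives no argument beyond citing \cite{KPV} for (b) and \cite{GTV} for (c), and your sketches are precisely the arguments found in those references, so the approach is essentially the same.
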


\begin{proof}
The assertion in (a) follows  from properties of group $e^{it\partial_x^2}$, (b) was obtained in \cite{KPV} and the proof of (c) can be found in \cite{GTV}.
\end{proof}

\subsection{The Duhamel boundary forcing operator associated to the linear Schr\"odinger equation}\label{section4}
We now introduce the Duhamel boundary forcing operator introduced in \cite{Holmer}. For $f\in C_0^{\infty}(\mathbb{R}^+)$, define the boundary forcing operator
\begin{eqnarray*}
	\mathcal{L}f(x,t)&=&2e^{i\frac{\pi}{4}}\int_0^te^{i(t-t')\partial_x^2}\delta_0(x)\mathcal{I}_{-\frac{1}{2}}f(t')dt'\\
	&=&\frac{1}{\sqrt{\pi}}\int_0^t(t-t')^{-\frac{1}{2}}e^{\frac{ix^2}{4(t-t')}}\mathcal{I}_{-\frac{1}{2}}f(t')dt'.
\end{eqnarray*}
The equivalence of the two equalities is clear from the formula

\begin{equation*}
\mathcal{F}_x\left( \frac{e^{-i \frac{\pi}{4}\text{sgn}\ a} }{2|a|^{1/2}\sqrt{\pi}}e^{\frac{ix^2}{4a}}\right)(\xi)=e^{-ia\xi^2},\ \forall\ a\in \mathbb{R}.
\end{equation*}

From this definition we see that
\begin{equation}\label{forçante}
\begin{cases}
		(i\partial_t+\partial_x^2)\mathcal{L}f(x,t)=2e^{i\frac{\pi}{4}}\delta_0(x)\mathcal{I}_{-\frac{1}{2}}f(t)
		& \text{for}\quad (x,t)\in \mathbb{R}\times(0,T),\\
		\mathcal{L}f(x,0)=0& \text{for}\quad  x\in \R,\\
		\mathcal{L}f(0,t)=f(t)& \text{for}\quad t\in(0,T).
\end{cases}
\end{equation}

The next result is concerning the properties of continuity of the functions $\mathcal{L}f(x,t)$. 
\begin{lemma}\label{continuidade}
	Let $f\in C_{0,c}^{\infty}(\mathbb{R}^+)$, then $\mathcal{L}f(x,t)$ verifies the following properties. 
	\begin{enumerate}
		\item[(a)] For a fixed time $t$, $\mathcal{L}f(x,t)$  is continuous with respect to the spatial variable $x\in \R$ and $\partial_x\mathcal{L}f(x,t)$ is continuous for all $x\neq 0$. Furthermore, 
		\begin{equation*}
		\lim_{x\rightarrow 0^{-}}\partial_x\mathcal{L}f(x,t)=e^{-i\frac{\pi}{4}}\mathcal{I}_{-1/2}f(t)
		\quad \text{and}\quad  
		\lim_{x\rightarrow 0^{+}}\partial_x\mathcal{L}f(x,t)=-e^{-i\frac{\pi}{4}}\mathcal{I}_{-1/2}f(t).
		\end{equation*}
	
		\item[(b)] For $N,k$ nonnegative integers and fixed $x$, $\partial_t^k\mathcal{L}f(x,t)$  is continuous in $t$ for all $t\in \mathbb{R^+}$. Moreover, we have pointwise estimates on the interval $[0,T]$ given by
		\begin{equation*}
		|\partial_t^k \mathcal{L}f(x,t)|+|\partial_x\mathcal{L}f(x,t)|\leq c\langle x\rangle^{-N},
		\end{equation*}
		where $c=c(f,N,k,T)$.
	\end{enumerate}
\end{lemma}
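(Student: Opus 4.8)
The plan is to work directly from the explicit integral representation
\begin{equation*}
\mathcal{L}f(x,t)=\frac{1}{\sqrt{\pi}}\int_0^t(t-t')^{-\frac{1}{2}}e^{\frac{ix^2}{4(t-t')}}\mathcal{I}_{-\frac{1}{2}}f(t')dt',
\end{equation*}
and to exploit that for $f\in C_{0,c}^{\infty}(\mathbb{R}^+)$ the function $g:=\mathcal{I}_{-\frac{1}{2}}f$ is again smooth, supported in $\mathbb{R}^+$, and (by the pointwise decay estimates on $\mathcal{I}_{\alpha}$ of smooth compactly supported data) rapidly decaying with all its derivatives. The singularity at $t'=t$ is only a mild $(t-t')^{-1/2}$ one, which is integrable; this is what will let us differentiate and pass limits under the integral sign away from $x=0$.

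For part (a), first I would substitute $\sigma=t-t'$ to write $\mathcal{L}f(x,t)=\frac{1}{\sqrt{\pi}}\int_0^t\sigma^{-1/2}e^{ix^2/(4\sigma)}g(t-\sigma)d\sigma$. Continuity in $x$ for fixed $t$ is then immediate from the dominated convergence theorem, since $|\sigma^{-1/2}e^{ix^2/(4\sigma)}g(t-\sigma)|\le\sigma^{-1/2}|g(t-\sigma)|$ is $x$-independent and integrable. For the derivative, I would differentiate under the integral for $x\neq0$, obtaining a factor $\frac{ix}{2\sigma}$; this produces a $\sigma^{-3/2}$ factor, which is no longer integrable at $\sigma=0$, so the naive differentiation must be justified by an integration by parts in $\sigma$ that trades the $\sigma^{-3/2}$ growth against a derivative landing on $g$. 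The crucial computation is the limit $x\to0^{\pm}$: writing the $\partial_x$-integrand and rescaling $\sigma\mapsto x^2/(4\sigma)$ (or $\sigma=x^2 w$) should convert the integral into one whose limiting value is a Gaussian/Fresnel-type constant times $g(t)=\mathcal{I}_{-\frac{1}{2}}f(t)$, and the sign of $x$ controls the orientation of the resulting contour, producing the claimed $\pm e^{-i\pi/4}$ and the jump. This jump is precisely the distributional identity $(i\partial_t+\partial_x^2)\mathcal{L}f=2e^{i\pi/4}\delta_0\mathcal{I}_{-1/2}f$ from \eqref{forçante} read through the derivative-jump relation $\lim_{x\to0^-}\partial_x-\lim_{x\to0^+}\partial_x=2e^{-i\pi/4}\mathcal{I}_{-1/2}f$, which serves as a consistency check.

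For part (b), continuity of $\partial_t^k\mathcal{L}f(x,t)$ in $t$ for fixed $x$ again follows by differentiating under the integral and applying dominated convergence, using that each $t$-derivative either falls on $g$ (harmless, since $g$ is smooth) or is handled via the $\sigma$-substitution so that no non-integrable singularity is created. The pointwise bound $|\partial_t^k\mathcal{L}f|+|\partial_x\mathcal{L}f|\le c\langle x\rangle^{-N}$ is the heart of the matter: for $|x|$ bounded it reduces to continuity on a compact set, while for $|x|$ large the decay must come from the oscillation of $e^{ix^2/(4\sigma)}$. I would obtain it by repeated integration by parts in $\sigma$, each step turning the oscillatory factor into a gain of $|x|^{-2}$ at the cost of differentiating the amplitude $\sigma^{-1/2}g(t-\sigma)$; since $g$ and all its derivatives are smooth and compactly supported away from $\sigma=0$ when $t$ ranges in $[0,T]$, iterating $N$ times yields the $\langle x\rangle^{-N}$ decay with a constant depending only on $f,N,k,T$.

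The main obstacle I anticipate is the rigorous justification near the endpoint $\sigma=0$, where the $\sigma^{-1/2}$ weight makes the boundary terms in the integration-by-parts arguments delicate, and where the limits $x\to0^{\pm}$ of $\partial_x\mathcal{L}f$ must be extracted carefully to get both the correct constant $e^{-i\pi/4}$ and the correct sign. Controlling these endpoint contributions—showing that they either vanish or assemble into the stated $\mathcal{I}_{-1/2}f(t)$ limit—is where the real work lies; the large-$x$ decay, by contrast, is a routine non-stationary phase argument once the amplitude's smoothness and support are used.
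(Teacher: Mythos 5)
Your outline is correct and follows essentially the same route as the proof the paper relies on: the paper does not prove this lemma itself but cites Lemma~6.1 of Holmer's NLS half-line paper, whose argument is exactly the one you describe --- dominated convergence for continuity in $x$, the substitution $\sigma=x^2/(4u)$ reducing the one-sided limits of $\partial_x\mathcal{L}f$ to the Fresnel integral $\int_0^{\infty}u^{-1/2}e^{iu}\,du=\sqrt{\pi}\,e^{i\pi/4}$ (which indeed yields $\pm e^{3i\pi/4}=\mp e^{-i\pi/4}$ times $\mathcal{I}_{-1/2}f(t)$), and repeated integration by parts against $e^{ix^2/(4\sigma)}$ for the $\langle x\rangle^{-N}$ decay. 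One minor inaccuracy: $\mathcal{I}_{-1/2}f$ is smooth and vanishes for $t\le 0$ but decays only polynomially (like $t^{-3/2}$) rather than rapidly at infinity, which is harmless here since $t$ ranges over $[0,T]$.
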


\begin{proof} 
	See Lemma 6.1 in \cite{Holmer}.
	\end{proof}

If we set $u(x,t)=e^{it\partial_x^2}\phi(x)+\mathcal{L}\big(f-e^{i\cdot \partial_x^2}\phi(\cdot)\big|_{x=0}\big)(x,t)$, then by  Lemma \ref{continuidade}-(a) $u(x,t)$ is continuous in $x$ and also $u(0,t)=f(t)$. Hence,  $u(x,t)$ solves, in the sense of distributions, the IBVP
\begin{equation}\label{forçante2}
	\begin{cases}
		iu_t(x,t)+u_{xx}(x,t)=0& \text{for}\quad (x,t)\in \mathbb{R}^*\times \mathbb{R},\\
		u(x,0)=\phi(x)& \text{for}\quad x\in\mathbb{R},\\
		u(0,t)=f(t)& \text{for}\quad t\in(0,T).
\end{cases}
\end{equation}
This would suffice to solve the linear analogue problem on the half-line.

\subsection{The Duhamel boundary forcing operator classes associated to linear Schr\"odinger equation}
Now we introduce the  Duhamel boundary forcing operator classes associated to the linear Schr\"odinger equation used in \cite{Cavalcante}. 

For $\lambda\in \mathbb{C}$ such that Re $\lambda>-2$  and $f\in C_0^{\infty}(\mathbb{R}^+)$ define
\begin{equation*}
\mathcal{L}_{+}^{\lambda}f(x,t)=\left[\frac{x_{-}^{\lambda-1}}{\Gamma(\lambda)}*\mathcal{L}\big(\mathcal{I}_{-\frac{\lambda}{2}}f\big)(\cdot,t)   \right](x),
\end{equation*}
 with $\frac{x_{-}^{\lambda-1}}{\Gamma(\lambda)}=\frac{(-x)_{+}^{\lambda-1}}{\Gamma(\lambda)}$, and 
\begin{equation*}
\mathcal{L}_{-}^{\lambda}f(x,t)=\left[\frac{x_{+}^{\lambda-1}}{\Gamma(\lambda)}*\mathcal{L}\big(\mathcal{I}_{-\frac{\lambda}{2}}f\big)(\cdot,t)   \right](x).
\end{equation*}
These definitions imply
\begin{equation}\label{forcings}
(i\partial_t+\partial_x^2)\mathcal{L}_{+}^\lambda f(x,t)=\frac{2e^{\frac{i\pi}{4}}}{\Gamma(\lambda)}x_{-}^{\lambda-1}\mathcal{I}_{-\frac{1}{2}-\frac{\lambda}{2}}f(t)
\end{equation}
and
\begin{equation}\label{forcings2}
(i\partial_t+\partial_x^2)\mathcal{L}_{-}^\lambda f(x,t)=\frac{2e^{\frac{i\pi}{4}}}{\Gamma(\lambda)}x_{+}^{\lambda-1}\mathcal{I}_{-\frac{1}{2}-\frac{\lambda}{2}}f(t),
\end{equation}
in the sense of distributions.

If Re $\lambda>0$, then
\begin{equation}\label{classe1}
\mathcal{L}_{+}^{\lambda}f(x,t)=\frac{1}{\Gamma(\lambda)}\int_{x}^{+\infty}(y-x)^{\lambda-1}\mathcal{L}\big(\mathcal{I}_{-\frac{\lambda}{2}}f\big)(y,t)dy,
\end{equation}
and
\begin{equation}\label{classe2}
\mathcal{L}_{-}^{\lambda}f(x,t)=\frac{1}{\Gamma(\lambda)}\int_{-\infty}^{x}(x-y)^{\lambda-1}\mathcal{L}\big(\mathcal{I}_{-\frac{\lambda}{2}}f\big)(y,t)dy,
\end{equation}

For Re $\lambda>-2$, using \eqref{forçante} we obtain
\begin{eqnarray}
\mathcal{L}_{+}^{\lambda}f(x,t)&=&\frac{1}{\Gamma(\lambda+2)}\int_{x}^{+\infty}(y-x)^{\lambda+1}\partial_y^2\mathcal{L}\big(\mathcal{I}_{-\frac{\lambda}{2}}f\big)(y,t)dy\nonumber\\
&=&-\int_{x}^{+\infty}\frac{(y-x)^{\lambda+1}}{\Gamma(\lambda+2)}\big(i\partial_{t}\mathcal{L}\mathcal{I}_{-\frac{\lambda}{2}}f\big)(y,t)dy\!\!+\!\!2e^{i\frac{\pi}{4}}\frac{x_{-}^{\lambda+1}}{\Gamma(\lambda+2)}\mathcal{I}_{-1/2-\lambda/2}f(t)\label{defnova}
\end{eqnarray}
and
\begin{eqnarray}
\mathcal{L}_{-}^{\lambda}f(x,t)&=&\frac{1}{\Gamma(\lambda+2)}\int_{-\infty}^{x}(x-y)^{\lambda+1}\partial_y^2\mathcal{L}\big(\mathcal{I}_{-\frac{\lambda}{2}}f\big)(y,t)dy\nonumber\\
&=&-\int_{-\infty}^{x}\frac{(x-y)^{\lambda+1}}{\Gamma(\lambda+2)}\big(i\partial_{t}\mathcal{L}\mathcal{I}_{-\frac{\lambda}{2}}f\big)(y,t)dy\!\!+\!\!2e^{i\frac{\pi}{4}}\frac{x_{+}^{\lambda+1}}{\Gamma(\lambda+2)}\mathcal{I}_{-1/2-\lambda/2}f(t)\label{defnova2}.
\end{eqnarray}

Notice that $\frac{x_{\pm}^{\lambda-1}}{\Gamma(\lambda)}\bigg|_{\lambda=0}=\delta_0$, then $\mathcal{L}_{\pm}^{0}f(x,t)=\mathcal{L}f(x,t)$ and by \eqref{defnova} and \eqref{defnova2} we have  $\mathcal{L}_{\pm}^{-1}f(x,t)=\partial_x\mathcal{L}(\mathcal{I}_{1/2}f)(x,t)$.

From Lemma \ref{continuidade} it follows that   $\mathcal{L}_{\pm}^{\lambda}f(x,t)$ is well defined for $ \lambda>-2$ for $t\in [0,1]$. Moreover, the dominated convergence theorem and Lemma \ref{continuidade} imply that, for fixed $t\in [0,1]$ and $\text{Re}\,\lambda>-1$, the function  $\mathcal{L}^{\lambda}f(x,t)$ is continuous in $x$ for all $x\in \mathbb{R}$.

The next result establishes  the values of $\mathcal{L}_{\pm}^{\lambda}f(x,t)$ at $x=0$.

\begin{lemma}\label{lemma-lzero}
	If $\emph{Re}\ \lambda>-1$ and $f\in C^{\infty}_0(\R^+)$, then
	\begin{equation}\label{lzero}
	\mathcal{L}^{\lambda}_{\pm}f(0,t)=e^{i\frac{\lambda\pi}{4}}f(t).
	\end{equation}
\end{lemma}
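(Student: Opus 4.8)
The plan is to evaluate $\mathcal{L}^{\lambda}_{\pm}f(x,t)$ at $x=0$ by passing to the Fourier side in the spatial variable, where the convolution against $\frac{x_{\mp}^{\lambda-1}}{\Gamma(\lambda)}$ becomes multiplication by a known symbol. Recall from \eqref{transformada} (applied in the spatial variable, together with its reflected counterpart) that the Fourier transform of $\frac{x_{-}^{\lambda-1}}{\Gamma(\lambda)}$ is a constant multiple of $(\xi - i0)^{-\lambda}$ (and $\frac{x_{+}^{\lambda-1}}{\Gamma(\lambda)}$ transforms to the conjugate-type symbol $(\xi + i0)^{-\lambda}$, up to the phase $e^{-\frac{1}{2}\pi\lambda}$). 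First I would write $\mathcal{L}\big(\mathcal{I}_{-\lambda/2}f\big)(\cdot,t)$ via its spatial Fourier representation using the definition of $\mathcal{L}$ and the identity $\mathcal{F}_x\!\big(\tfrac{1}{2|a|^{1/2}\sqrt{\pi}}e^{-i\frac{\pi}{4}\operatorname{sgn}a}e^{\frac{ix^2}{4a}}\big)(\xi)=e^{-ia\xi^2}$, so that the convolution defining $\mathcal{L}^{\lambda}_{\pm}f$ turns into the product of $e^{-i\frac{1}{2}\pi\lambda}(\xi\mp i0)^{-\lambda}$ with the symbol of $\mathcal{L}\big(\mathcal{I}_{-\lambda/2}f\big)$.

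Evaluating at $x=0$ then amounts to integrating the resulting symbol in $\xi$, i.e. computing $\mathcal{L}^{\lambda}_{\pm}f(0,t)=\tfrac{1}{2\pi}\int_{\mathbb{R}} \big(\widehat{\cdot}\big)(\xi,t)\,d\xi$. The key step is to reduce this to the already-known base case $\lambda=0$: since $\mathcal{L}^{0}_{\pm}f=\mathcal{L}f$ and $\mathcal{L}f(0,t)=f(t)$ by \eqref{forçante}, I expect the $\lambda$-dependence to collapse into the single scalar factor coming from the distributional boundary values of $(\xi\mp i0)^{-\lambda}$. Concretely, the half-line structure of the symbol $(\xi\mp i0)^{-\lambda}$ contributes a phase $e^{\pm i\frac{\pi}{2}\lambda}$ on the relevant half of the $\xi$-axis, which, combined with the factor $e^{-\frac{1}{2}\pi\lambda}$ from \eqref{transformada} and the compensating factor $\mathcal{I}_{-\lambda/2}$ (which on the Fourier-in-$t$ side supplies $(\tau - i0)^{\lambda/2}$, consistent with the $\tau=\xi^2$ relation enforced by the group), should leave exactly the clean phase $e^{i\lambda\pi/4}$ asserted in \eqref{lzero}.

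An alternative and perhaps cleaner route, which I would use to organize the bookkeeping, is analytic continuation in $\lambda$. Both sides of \eqref{lzero} are analytic in $\lambda$ on the strip $\operatorname{Re}\lambda>-1$ (the left side because $\mathcal{L}^{\lambda}_{\pm}f(x,t)$ is continuous in $x$ there and depends analytically on $\lambda$ by the integral representations \eqref{classe1}--\eqref{defnova2}, the right side manifestly). Hence it suffices to verify the identity for $\operatorname{Re}\lambda$ large, where $\mathcal{L}^{\lambda}_{\pm}f$ is given by the absolutely convergent integral \eqref{classe1} or \eqref{classe2}; there I can differentiate under the integral sign, use the explicit kernel, and reduce to the $\lambda=0$ value via the fractional-integration identity $\mathcal{I}_{-\lambda/2}$ and Lemma~\ref{continuidade}. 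The main obstacle I anticipate is the careful handling of the distributional boundary value $(\xi\mp i0)^{-\lambda}$ and tracking the several phase factors ($e^{i\pi/4}$ from $\mathcal{L}$, $e^{-\pi\lambda/2}$ from \eqref{transformada}, and the half-axis phases from the symbol) so that they combine to precisely $e^{i\lambda\pi/4}$ rather than an incorrect multiple; getting these phases to cancel correctly, and justifying the interchange of the $x\to 0$ limit with the $\xi$-integration, is where the real care is needed.
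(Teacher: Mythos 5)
The paper gives no in-text proof of this lemma; it defers to \cite{Cavalcante}, where the computation is modeled on the analogous KdV lemma in \cite{Holmerkdv}. That proof is a direct real-space calculation rather than your Fourier-side route: for $0<\operatorname{Re}\lambda<1$ one writes, using \eqref{classe1} and the explicit kernel of $\mathcal{L}$,
\[
\mathcal{L}^{\lambda}_{+}f(0,t)=\frac{1}{\sqrt{\pi}\,\Gamma(\lambda)}\int_0^t (t-t')^{-\frac12}\Bigl(\int_0^{+\infty}y^{\lambda-1}e^{\frac{iy^2}{4(t-t')}}dy\Bigr)\mathcal{I}_{-\frac{1}{2}-\frac{\lambda}{2}}f(t')\,dt',
\]
evaluates the inner Fresnel-type integral as $\tfrac{1}{2}\Gamma(\tfrac{\lambda}{2})e^{i\pi\lambda/4}\bigl(4(t-t')\bigr)^{\lambda/2}$, recognizes the remaining $t'$-integral as $\Gamma(\tfrac{\lambda+1}{2})\,\mathcal{I}_{\frac{\lambda+1}{2}}\mathcal{I}_{-\frac{\lambda+1}{2}}f=\Gamma(\tfrac{\lambda+1}{2})f$ by the semigroup law $\mathcal{I}_{\alpha}\mathcal{I}_{\beta}=\mathcal{I}_{\alpha+\beta}$, and finally uses the Legendre duplication formula $\Gamma(\tfrac{\lambda}{2})\Gamma(\tfrac{\lambda}{2}+\tfrac12)=2^{1-\lambda}\sqrt{\pi}\,\Gamma(\lambda)$ to see that the accumulated constant is exactly $1$; the evenness of the kernel $e^{ix^2/4a}$ gives the same answer for $\mathcal{L}^{\lambda}_{-}$, and the full range $\operatorname{Re}\lambda>-1$ follows by analytic continuation, exactly as in the second route you sketch.

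Measured against this, your plan has the right skeleton (explicit kernel, interchange of integrals, analytic continuation in $\lambda$), but it stops short of the actual content of the lemma. The statement that the various phases ``should leave exactly the clean phase $e^{i\lambda\pi/4}$'' is precisely what must be proved: the nontrivial point is not only the phase but the fact that the real normalization constant is exactly $1$, which requires computing the Gamma-function value of the oscillatory integral and invoking the duplication formula (on your Fourier side one additionally needs $1+e^{-i\pi\lambda}=2e^{-i\pi\lambda/2}\cos(\tfrac{\pi\lambda}{2})$ together with the reflection formula, since both half-axes of $\xi$ contribute). Moreover, the proposed mechanism of ``reducing to the base case $\lambda=0$'' by factoring a scalar out of the symbol $(\xi\mp i0)^{-\lambda}$ cannot work as stated: that symbol is not a constant multiple of the $\lambda=0$ symbol, and the $\lambda$-dependence is not removed by a scalar factorization but by the extra power $(t-t')^{\lambda/2}$ produced by the spatial integration being absorbed into $\mathcal{I}_{-\lambda/2}$ via the composition law. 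Until the oscillatory integral is evaluated and the constant is shown to equal $1$, the argument is a plan rather than a proof.
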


We finish this section with some estimates for the Duhamel boundary forcing operators classes $\mathcal{L}^{\lambda}_{\pm}$.

\begin{lemma}\label{edbf}
Let  $s\in\mathbb{R}$ and $f\in C_0^{\infty}(\mathbb{R}^+)$. The following estimates are valid: \medskip
	\begin{enumerate} 
		\item[(a)] (\textbf{space traces}) $\|\mathcal{L}_{\pm}^{\lambda}f(x,t)\|_{\mathcal{C}\big(\mathbb{R}_t;\,H^s(\mathbb{R}_x^+)\big)}\leq c \|f\|_{H_0^{(2s+1)/4}(\mathbb{R}^+)}$ whenever $s-\frac{3}{2}<\lambda<\min\big\{s+\frac{1}{2},\, \frac{1}{2}\big\}$ and $\text{supp}\,(f) \subset [0,1]$. \medskip 
		\item[(b)](\textbf{time traces}) $\|\psi(t)\mathcal{L}_{\pm}^{\lambda}f(x,t)\|_{\mathcal{C}\big(\mathbb{R}_x;\,H_0^{(2s+1)/4}(\mathbb{R}_t^+)\big)}\leq c \|f\|_{H_0^{(2s+1)/4}(\mathbb{R}^+)}$ whenever $-1<\lambda<1$. \medskip 
		\item[(c)](\textbf{Bourgain spaces}) $\|\psi(t)\mathcal{L}_{\pm}^{\lambda}f(x,t)\|_{X^{s,b}}\leq \|f\|_{H_0^{(2s+1)/4}(\mathbb{R}^+)}$ whenever $s-\frac{1}{2}<\lambda<\min\big\{s+\frac{1}{2},\, \frac{1}{2}\big\}$  and  $b<\frac{1}{2}$.	
	\end{enumerate}
\end{lemma}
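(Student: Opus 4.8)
The plan is to estimate the three pieces of Lemma \ref{edbf} by reducing everything to the single Duhamel boundary forcing operator $\mathcal{L}$ and the mapping properties of the Riemann-Liouville operators $\mathcal{I}_\alpha$, exploiting the convolution structure $\mathcal{L}_\pm^\lambda f = \frac{x_\mp^{\lambda-1}}{\Gamma(\lambda)} * \mathcal{L}(\mathcal{I}_{-\lambda/2}f)$. The heuristic is that convolution against $\frac{x_\mp^{\lambda-1}}{\Gamma(\lambda)}$ acts like a spatial fractional integral of order $\lambda$ (shifting the $H^s_x$ index by $\lambda$), while the factor $\mathcal{I}_{-\lambda/2}$ shifts the temporal regularity; the constraints $s-\tfrac32<\lambda$, $\lambda<s+\tfrac12$, etc., are exactly the windows in which these fractional operations land in the right space. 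I would treat $\mathcal{L}_+$ and $\mathcal{L}_-$ symmetrically since only the support side of $\frac{x_\mp^{\lambda-1}}{\Gamma(\lambda)}$ differs.

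For part (b), the time traces, I would first reduce to $x=0$: by Lemma \ref{lemma-lzero} we have $\mathcal{L}_\pm^\lambda f(0,t)=e^{i\lambda\pi/4}f(t)$, which immediately gives the bound at $x=0$ with the $H_0^{(2s+1)/4}$ norm, and then use the pointwise decay estimates from Lemma \ref{continuidade}-(b) together with the time-regularity smoothing to extend the $C(\mathbb{R}_x; H_0^{(2s+1)/4}(\mathbb{R}_t^+))$ estimate uniformly in $x$. Here the combination $\mathcal{I}_{-\lambda/2}$ inside and the time-trace estimate for $\mathcal{L}$ (inherited from the smoothing estimate for $e^{it\partial_x^2}$) together produce the correct $\tfrac{2s+1}{4}$ index, and the range $-1<\lambda<1$ is dictated by Lemma \ref{lio-lemaint} applied to $\mathcal{I}_{-\lambda/2}$ and by the requirement that the $H_0$-norms be preserved under the cutoff (Lemma \ref{cut}).

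For parts (a) and (c) I would pass to the Fourier side. Writing everything through $\mathcal{F}_x$ and $\mathcal{F}_t$, I expect the symbol of convolution by $\frac{x_\mp^{\lambda-1}}{\Gamma(\lambda)}$ to behave like $(\xi \mp i0)^{-\lambda}$ (up to phases from \eqref{transformada}), so the $X^{s,b}$-norm of $\psi(t)\mathcal{L}_\pm^\lambda f$ becomes an integral of $\langle\xi\rangle^{2s}\langle\tau+\xi^2\rangle^{2b}|\xi|^{-2\lambda}$ against the transform of $\mathcal{L}(\mathcal{I}_{-\lambda/2}f)$. Using the explicit oscillatory form of $\mathcal{L}$ — whose space-time transform concentrates on $\tau \approx -\xi^2$ with a fractional weight matching $\mathcal{I}_{-1/2}$ — I would split the $\xi$-integral into low frequencies $|\xi|\lesssim 1$ and high frequencies $|\xi|\gtrsim 1$. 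On high frequencies the weight $|\xi|^{-2\lambda}\langle\xi\rangle^{2s}$ is controlled precisely when $\lambda<s+\tfrac12$ (so the output sits in $H^s_x$) giving part (a), while the integrability at low frequencies forces $\lambda<\tfrac12$; the lower bounds $s-\tfrac32<\lambda$ (resp.\ $s-\tfrac12<\lambda$ for the Bourgain estimate) come from requiring convergence of the dual integral in $b$ after invoking the $b<\tfrac12$ constraint via Lemma \ref{lemagtv1} or the elementary estimate \eqref{lemanovo}.

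The main obstacle I anticipate is the Bourgain-space estimate (c) at the threshold $b<\tfrac12$: unlike the classical case $b>\tfrac12$, one cannot simply use the algebra/trace properties of $X^{s,b}$ and must instead carefully track the interplay between the spatial weight $|\xi|^{-2\lambda}$ coming from the convolution kernel and the modulation weight $\langle\tau+\xi^2\rangle^{2b}$, ensuring the low-frequency singularity of the kernel is absorbed by the temporal smoothing of $\mathcal{L}$ rather than producing a divergence. This is where the precise endpoints $s-\tfrac12<\lambda<\min\{s+\tfrac12,\tfrac12\}$ are forced, and I would expect to lean on Lemma \ref{T} to trade the $b<\tfrac12$ Bourgain norm against the $H_0$-norm of the boundary data and on the elementary one-dimensional integral estimate \eqref{lemanovo} to close the frequency integration.
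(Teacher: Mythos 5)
The paper does not actually prove Lemma \ref{edbf}; it defers entirely to \cite{Cavalcante}, where the argument (modelled on Holmer's treatment of $\mathcal{V}_{\pm}^{\lambda}$ in \cite{Holmerkdv}) computes the spatial, temporal and space--time Fourier transforms of $\psi(t)\mathcal{L}_{\pm}^{\lambda}f$ explicitly and estimates the resulting integrals after a low/high frequency split. Your skeleton for parts (a) and (c) --- the kernel $\frac{x_{\mp}^{\lambda-1}}{\Gamma(\lambda)}$ contributing a symbol of size $|\xi|^{-\lambda}$, the split $|\xi|\lesssim 1$ versus $|\xi|\gtrsim 1$, and $\lambda<\frac12$ coming from low-frequency integrability --- matches that strategy. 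But two steps of your plan would not close as written.

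First, part (b). Reducing to $x=0$ via Lemma \ref{lemma-lzero} and then ``extending by pointwise decay'' is not a valid mechanism: Lemma \ref{continuidade}(b) gives bounds with constants $c=c(f,N,k,T)$ depending on $f$ itself rather than on $\|f\|_{H_0^{(2s+1)/4}(\mathbb{R}^+)}$, and pointwise control of $\partial_t^k\mathcal{L}f$ in $x$ does not yield a uniform-in-$x$ bound on the fractional norm $H^{(2s+1)/4}(\mathbb{R}_t)$. The actual proof fixes $x$, computes $\mathcal{F}_t\big[\psi\mathcal{L}_{\pm}^{\lambda}f(x,\cdot)\big](\tau)$ in closed form, and uses that the $x$-dependence enters only through factors of modulus at most one (oscillatory for one sign of $\tau$, exponentially decaying for the other); the window $-1<\lambda<1$ falls out of the convergence of those explicit integrals, not from Lemma \ref{lio-lemaint} (valid for all $\alpha\ge 0$) or Lemma \ref{cut}. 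Second, your attribution of the constraints on $\lambda$ in (a) and (c) is off: part (a) contains no parameter $b$, so the lower bound $s-\frac32<\lambda$ cannot come from ``the dual integral in $b$''; it reflects the $x_{\mp}^{\lambda+1}$-type singularity of $\mathcal{L}_{\pm}^{\lambda}f(\cdot,t)$ at the origin visible in \eqref{defnova}--\eqref{defnova2}, equivalently the high-frequency decay $|\xi|^{-\lambda-2}$ of its spatial transform, which lies in $H^s_x$ precisely when $\lambda>s-\frac32$ (the Bourgain weight $\langle\tau+\xi^2\rangle^{2b}$ then tightens this to $\lambda>s-\frac12$ in (c)). Finally, Lemma \ref{T} cannot be used to ``trade the Bourgain norm against the $H_0$-norm of the boundary data'': it compares two Bourgain norms of the same function under a time cutoff and plays no role in this lemma.
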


The proof of lemmas \ref{lemma-lzero} and \ref{edbf} can be seen in \cite{Cavalcante}.
	
\subsection{Linear group associated to the KdV equation}
The linear unitary  group $e^{-t\partial_x^3}:\mathscr{S}'(\mathbb{R})\rightarrow \mathscr{S}'(\mathbb{R})$ associated to the linear KdV equation is defined by
\begin{equation*}
e^{-t\partial_x^3}\phi(x)=\Big(e^{it\xi^3}\widehat{\phi}(\xi)\Big)^{\lor{}}(x),
\end{equation*}
so that
\begin{equation}\label{lineark}
\begin{cases}
(\partial_t+\partial_x^3)e^{-t\partial_x^3}\phi(x,t)=0& \text{for}\quad (x,t)\in \mathbb{R}\times\mathbb{R},\\
e^{-t\partial_x^3}(x,0)=\phi(x)&\text{for}\quad  x\in\mathbb{R}.
\end{cases} 
\end{equation}

The next estimates were proven in \cite{Holmerkdv}.

\begin{lemma}\label{grupok}
	Let $k\in\mathbb{R}$ and  $0< b<1$. If $\phi\in H^s(\mathbb{R})$, then we have \medskip
	\begin{enumerate}
		\item[(a)] $\|e^{-t\partial_x^3}\phi(x)\|_{\mathcal{C}\big(\mathbb{R}_t;\,H^k(\mathbb{R}_x)\big)}\leq c\|\phi\|_{H^k(\mathbb{R})}$ (\textbf{space traces}), \medskip
		\item[(b)] $\|\psi(t) e^{-t\partial_x^3}\phi(x)\|_{\mathcal{C}\big(\mathbb{R}_x;\,H^{(k+1)/3}(\mathbb{R}_t)\big)}\leq c \|\phi\|_{H^k(\mathbb{R})}$ (\textbf{time traces}), \medskip 
		\item[(c)] $\|\psi(t) \partial_xe^{-t\partial_x^3}\phi(x)\|_{\mathcal{C}\big(\mathbb{R}_x;\,H^{k/3}(\mathbb{R}_t)\big)}\leq c \|\phi\|_{H^k(\mathbb{R})}$
		(\textbf{derivative time traces}),\medskip 
		\item [(d)] $\|\psi(t)e^{-t\partial_x^3}\phi(x)\|_{Y^{k,b}\cap V^{\alpha}}\leq c \|\phi\|_{H^k(\mathbb{R})}$ (\textbf{Bourgain spaces}).
	\end{enumerate}
\end{lemma}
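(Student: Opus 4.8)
The plan is to establish the four bounds independently, exploiting that $e^{-t\partial_x^3}$ acts on the Fourier side as multiplication by the unimodular symbol $e^{it\xi^3}$, so that each estimate reduces to a computation in frequency. For (a), since $|e^{it\xi^3}|=1$, multiplication by $e^{it\xi^3}$ commutes with $\langle\xi\rangle^k$ and preserves $L^2_\xi$; hence $\|e^{-t\partial_x^3}\phi\|_{H^k(\R)}=\|\phi\|_{H^k(\R)}$ for every $t$, and $e^{-t\partial_x^3}$ is a unitary group on $H^k(\R)$. Strong continuity in $t$ follows by dominated convergence applied to $\int\langle\xi\rangle^{2k}|e^{it\xi^3}-e^{it_0\xi^3}|^2|\widehat\phi(\xi)|^2\,d\xi$, which yields the $C(\R_t;H^k(\R_x))$ bound.

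The estimates (b) and (c) are the crux. I would fix $x$, view $t\mapsto e^{-t\partial_x^3}\phi(x)$ as a function of $t$, and split $\widehat\phi$ into its high-frequency part ($|\xi|\ge 1$) and its low-frequency part ($|\xi|\le 1$). On the high-frequency piece the monotone substitution $\tau=\xi^3$ (with $d\tau=3\xi^2\,d\xi$) computes the time Fourier transform explicitly; inserting the weight $\langle\tau\rangle^{(k+1)/3}\sim\langle\xi\rangle^{k+1}$ together with the Jacobian factor $|\xi|^{-2}$ leaves exactly $\langle\xi\rangle^{2k}|\widehat\phi(\xi)|^2$, which is controlled by $\|\phi\|_{H^k}^2$; for (c) the extra factor $|\xi|^2$ produced by $\partial_x$ cancels the Jacobian, and the weight is correspondingly lowered to $\langle\tau\rangle^{k/3}$. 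The low-frequency piece is where the cutoff $\psi$ is essential: there $e^{-t\partial_x^3}\phi(x)$ and $\partial_x e^{-t\partial_x^3}\phi(x)$ are smooth in $t$ with all $t$-derivatives bounded by $\|\widehat\phi\,\chi_{|\xi|\le 1}\|_{L^1}\lesssim\|\phi\|_{H^k}$, and multiplying by $\psi$ confines them to $|t|\le 2$, so their $H^{(k+1)/3}_t$ (resp. $H^{k/3}_t$) norms are bounded uniformly in $x$. Continuity in $x$ follows from the same integral representation. This is the Kenig--Ponce--Vega smoothing computation carried out in \cite{Holmerkdv}.

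For (d) I would compute the space-time Fourier transform directly: since $\mathcal{F}_t\big(\psi(t)e^{it\xi^3}\big)(\tau)=\widehat\psi(\tau-\xi^3)$, one has $\mathcal{F}\big(\psi(t)e^{-t\partial_x^3}\phi\big)(\xi,\tau)=\widehat\phi(\xi)\,\widehat\psi(\tau-\xi^3)$. Substituting $\sigma=\tau-\xi^3$ then gives
\[
\|\psi\,e^{-t\partial_x^3}\phi\|_{Y^{k,b}}^2=\|\phi\|_{H^k(\R)}^2\int\langle\sigma\rangle^{2b}|\widehat\psi(\sigma)|^2\,d\sigma,
\]
which is finite because $\psi\in C_0^\infty(\R)$. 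For the $V^\alpha$ factor I would use $\langle\tau\rangle\lesssim\langle\tau-\xi^3\rangle\langle\xi\rangle^3$, so that $\langle\tau\rangle^{2\alpha}\lesssim\langle\tau-\xi^3\rangle^{2\alpha}\langle\xi\rangle^{6\alpha}$, and the same substitution yields a bound by $\|\psi\|_{H^\alpha}^2\|\phi\|_{H^k}^2$ whenever $3\alpha\le k$, which is the range in which $V^\alpha$ is invoked.

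The main obstacle is the low-frequency regime in (b) and (c): near $\xi=0$ the change of variables $\tau=\xi^3$ degenerates and produces the non-integrable Jacobian factor $|\xi|^{-2}$, so the clean high-frequency computation breaks down there. It is precisely the time cutoff $\psi$ that rescues the estimate, by trading the singular weight for smoothness on a compact time interval; correctly isolating this contribution is the only genuinely delicate point, the remaining parts being unitarity and direct Fourier-side computations.
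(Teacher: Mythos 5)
Your parts (a)--(c) and the $Y^{k,b}$ half of (d) are correct and follow exactly the route the paper relies on: the paper gives no proof of this lemma, it only cites Holmer \cite{Holmerkdv}, and your unitarity argument plus the change of variables $\tau=\xi^3$ with the high/low frequency splitting is precisely that computation. The delicate point you flag (the degenerate Jacobian $|\xi|^{-2}$ near $\xi=0$, cured by the cutoff $\psi$) is the right one.

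There is, however, a genuine gap in the $V^{\alpha}$ part of (d). Your bound
\[
\|\psi\,e^{-t\partial_x^3}\phi\|_{V^{\alpha}}^2\lesssim \int \langle\xi\rangle^{6\alpha}|\widehat\phi(\xi)|^2\,d\xi\;\cdot\;\|\psi\|_{H^{\alpha}}^2
\]
indeed requires $3\alpha\le k$ to be dominated by $\|\phi\|_{H^k}^2$, but your claim that this is ``the range in which $V^{\alpha}$ is invoked'' is false for this paper: the bilinear estimate (Lemma \ref{bilinear1}) needs $\alpha>\tfrac12$ while $k$ ranges down to $-\tfrac34+$, and Lemma \ref{duhamelk}(d) needs $\alpha>1-b>\tfrac12$, so $3\alpha>\tfrac32>k$ throughout the regimes of interest. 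With the paper's literal definition of $V^{\alpha}$ (no frequency restriction) the inequality in (d) is in fact \emph{false} when $3\alpha>k$: testing with $\widehat\phi$ a bump at frequency $\xi_0$ gives $\|\psi e^{-t\partial_x^3}\phi\|_{V^{\alpha}}\sim\langle\xi_0\rangle^{3\alpha}\|\widehat\phi\|_{L^2}$ against $\|\phi\|_{H^k}\sim\langle\xi_0\rangle^{k}\|\widehat\phi\|_{L^2}$. The missing ingredient is that Holmer's low-frequency correction space carries the restriction $|\xi|\le 1$ in its norm (his $D_{\alpha}$), a restriction this paper's transcription of $V^{\alpha}$ has dropped. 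On $|\xi|\le1$ one has $\langle\tau\rangle\lesssim\langle\tau-\xi^3\rangle$ and $\langle\xi\rangle^{2k}\sim1$, so your own substitution $\sigma=\tau-\xi^3$ closes the estimate for every $\alpha\ge0$ and every $k\in\R$ with no constraint between them. You should either incorporate that restriction or state the constraint $3\alpha\le k$ explicitly; as written, your proof does not establish (d) in the generality in which the lemma is later applied.
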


\begin{remark}
	The spaces $V^{\alpha}$ introduced in \cite{Holmerkdv}  give us  useful auxiliary norms of the classical Bourgain spaces in order to validate the nonlinear estimates associated to the KdV equation.  
\end{remark}

\subsection{The Duhamel boundary forcing operator associated to the linear KdV equation}\label{section4}
Now we give the properties of the Duhamel boundary forcing operator introduced By Colliander and Kenig in \cite{Holmerkdv}, that is
\begin{equation}\label{lk}
\begin{split}
\mathcal{V}g(x,t)&=3\int_0^te^{-(t-t')\partial_x^3}\delta_0(x)\mathcal{I}_{-2/3}g(t')dt'\\
&=3\int_0^t\int_{\mathbb{R}}e^{i(t-t')\xi^3}e^{ix\xi}d\xi\mathcal{I}_{-2/3}g(t')dt'\\
&=3\int_0^t\int_{\mathbb{R}}\frac{1}{(t-t')^{1/3}}e^{i\xi^3}e^{i x\xi/(t-t')^{1/3}}d\xi\mathcal{I}_{-2/3}f(t')dt'\\
&=3\int_0^t A\left(\frac{x}{(t-t')^{1/3}}\right)\frac{\mathcal{I}_{-2/3}g(t')}{(t-t')^{1/3}}dt',
\end{split}
\end{equation}
defined for all $g\in C_0^{\infty}(\mathbb{R}^+)$ and $A$ denotes the Airy function 
$$A(x)=\frac{1}{2\pi}\int_{\xi}e^{ix\xi}e^{i\xi^3}d\xi.$$

From the definition of $\mathcal{V}$  it follows that 
\begin{equation}\label{forcingk}
\begin{cases}
(\partial_t+\partial_x^3)\mathcal{V}g(x,t)=3\delta_0(x)\mathcal{I}_{-\frac{2}{3}}g(t)& \text{for}\quad (x,t)\in \mathbb{R}\times\mathbb{R},\\
\mathcal{V}g(x,0)=0& \text{for}\quad x\in\mathbb{R}.
\end{cases}
\end{equation}

The proof of the results exhibited in this section were shown in \cite{Holmerkdv}.

\begin{lemma}\label{lemacrb1}
	Let $g\in C_0^{\infty}(\mathbb{R}^+)$ and consider a fixed time  $t\in[0,1]$. Then, 
	\begin{enumerate}
	\item[(a)]the functions $\mathcal{V}g(\cdot,t)$ and $\partial_x\mathcal{V}g(\cdot,t)$ are continuous in $x$ for all $x\in\mathbb{R}$. Moreover, they satisfy the spatial decay bounds
    $$
	|\mathcal{V}g(x,t)|+|\partial_x\mathcal{V}g(x,t)|\leq c_k\|g\|_{H^{k+1}}\langle x\rangle^{-k}\quad \text{for all}\;  k \geq 0; 
    $$
	\item[(b)] the function $\partial_x^2\mathcal{V}g(x,t)$ is continuous in $x$ for all $x\neq 0$ and has a step discontinuity
	of size $3\mathcal{I}_{\frac{2}{3}}g(t)$ at $x=0$. Also, $\partial_x^2\mathcal{V}g(x,t)$ satisfies the spatial decay bounds
	$$
	|\partial_x^2\mathcal{V}g(x,t)|\leq c_k \|f\|_{H^{k+2}}\langle x\rangle^{-k} \quad \text{for all}\; k\geq 0.
	$$
\end{enumerate}
	\end{lemma}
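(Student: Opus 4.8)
The plan is to reduce the lemma to the pointwise behaviour of the Airy function together with the self-similar form of the Airy kernel. Writing $f:=\mathcal{I}_{-2/3}g$, which lies in $C_0^{\infty}(\mathbb{R}^+)$ and vanishes to infinite order at $t=0$ (by the mapping property of $\mathcal{I}_{\alpha}$ and since $\mathrm{supp}\,f\subset[0,\infty)$), the rescaling $\xi\mapsto\xi\tau^{-1/3}$ in the oscillatory integral in \eqref{lk} gives $e^{-\tau\partial_x^3}\delta_0(x)=\tau^{-1/3}A\big(x\tau^{-1/3}\big)=:K(x,\tau)$, so that $\mathcal{V}g(x,t)=3\int_0^tK(x,\tau)f(t-\tau)\,d\tau$. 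The basic input, which I would record first (either by citing the classical Airy estimates or by a stationary-phase analysis of $A(y)=\tfrac1{2\pi}\int e^{iy\xi+i\xi^3}d\xi$), is that $A\in C^{\infty}(\mathbb{R})$ decays together with all its derivatives faster than any power of $\langle y\rangle$ as $y\to+\infty$, while as $y\to-\infty$ one has the oscillatory bounds $|A^{(j)}(y)|\leq c_j\langle y\rangle^{(2j-1)/4}$, with leading term $|y|^{-1/4}e^{\pm\frac{2}{3}i|y|^{3/2}}$.

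For the continuity claims in (a) I would differentiate under the integral, the only delicate point being the endpoint $\tau\to0^+$. For $x>0$ the argument $x\tau^{-1/3}\to+\infty$ and the rapid decay of $A,A'$ renders the integrand harmless; for $x<0$ the oscillatory bounds give $|K(x,\tau)|\lesssim|x|^{-1/4}\tau^{-1/4}$ and $|\tau^{-2/3}A'(x\tau^{-1/3})|\lesssim|x|^{1/4}\tau^{-3/4}$, both integrable on $(0,t]$. Dominated convergence then yields continuity of $\mathcal{V}g(\cdot,t)$ and $\partial_x\mathcal{V}g(\cdot,t)=3\int_0^t\tau^{-2/3}A'(x\tau^{-1/3})f(t-\tau)\,d\tau$ on all of $\mathbb{R}$, including $x=0$, where $A,A'$ are finite.

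The substance of (a) is the spatial decay, and I expect the case $x<0$ to be the main obstacle. For $x>0$ it is immediate: $x\tau^{-1/3}\geq x\,t^{-1/3}$ is uniformly large, so the super-polynomial decay of $A$ and $A'$ gives $|\mathcal{V}g|+|\partial_x\mathcal{V}g|\lesssim\langle x\rangle^{-N}$ for every $N$ at once. For $x<0$ the kernel is only oscillatory and decays like $|x|^{-1/4}$, so I would insert the asymptotic expansion of $A$, which produces the phase $\Phi(\tau)=\tfrac{2}{3}|x|^{3/2}\tau^{-1/2}$ with $\Phi'(\tau)=-\tfrac13|x|^{3/2}\tau^{-3/2}\neq0$ on $(0,t]$; thus the $\tau$-integral is genuinely non-stationary. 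Repeated integration by parts in $\tau$ converts each factor $|x|^{-3/2}$ into one time derivative falling on $f$; since $f$ and its derivatives vanish at $\tau=0$ the boundary terms are controlled, and after about $2k/3$ steps one extracts $\langle x\rangle^{-k}$. The same computation applies verbatim to $\partial_x\mathcal{V}g$. Counting the derivatives and using the Riemann--Liouville bound \eqref{lio} to pass from $f=\mathcal{I}_{-2/3}g$ back to $g$ shows that the norm $\|g\|_{H^{k+1}}$ suffices, exactly as asserted.

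Finally, for (b) I would use the Airy identity $A''(y)=\tfrac{y}{3}A(y)$ to write $\partial_x^2\mathcal{V}g(x,t)=\int_0^t x\,\tau^{-4/3}A(x\tau^{-1/3})f(t-\tau)\,d\tau$. Away from $x=0$ this is continuous and obeys the same decay bound as in (a) by the identical splitting and non-stationary-phase argument, now at the cost of one further derivative, whence $\|g\|_{H^{k+2}}$. The step at $x=0$ is a concentration phenomenon at the endpoint $\tau\to0^+$: the substitution $y=x\tau^{-1/3}$ turns the integral into $3\int_{xt^{-1/3}}^{\infty}A(y)f(t-x^3y^{-3})\,dy$ for $x>0$ and into $-3\int_{-\infty}^{xt^{-1/3}}A(y)f(t-x^3y^{-3})\,dy$ for $x<0$, and letting $x\to0^{\pm}$ produces the one-sided limits $3f(t)\int_0^{\infty}A$ and $-3f(t)\int_{-\infty}^0A$. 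Using $\int_0^{\infty}A=\tfrac13$ and $\int_{-\infty}^0A=\tfrac23$, their difference is $3f(t)=3\mathcal{I}_{-2/3}g(t)$, which is the size of the step discontinuity and is consistent with the forcing identity \eqref{forcingk}. This identifies the jump and closes the argument.
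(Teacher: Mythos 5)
Your argument is essentially correct, and it is worth saying up front that the paper itself gives no proof of this lemma: it defers entirely to Holmer \cite{Holmerkdv}, whose proof runs along the same lines you propose --- rapid (super-polynomial) decay of the Airy function on the right, the oscillatory bounds $|A^{(j)}(y)|\lesssim\langle y\rangle^{(2j-1)/4}$ on the left, and repeated non-stationary-phase integration by parts in the time variable against the phase $\tfrac{2}{3}|x|^{3/2}\tau^{-1/2}$, with the infinite-order vanishing of $\mathcal{I}_{-2/3}g$ at $t=0$ killing the boundary terms. Your treatment of part (b) via the Airy equation $A''(y)=\tfrac{y}{3}A(y)$ (which is the correct identity for this normalization of $A$) and the self-similar substitution $y=x\tau^{-1/3}$ is clean, and the values $\int_0^{\infty}A=\tfrac13$ and $\int_{-\infty}^{0}A=\tfrac23$ are right.

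Two points should be tightened. First, in the $x\to 0^{-}$ limit of part (b) the dominated convergence you implicitly invoke is not available: $A$ is not absolutely integrable on $(-\infty,0)$, so $\int_{-\infty}^{0}A$ converges only as an improper oscillatory integral. One fix is to write $f(t-x^3y^{-3})=f(t)+O(|x|^3|y|^{-3})$, observe that the error term contributes $O\bigl(|x|^3\cdot(|x|t^{-1/3})^{-2}\bigr)=O(|x|)$ because the integration range is $|y|\geq |x|t^{-1/3}$, and then handle $\int_{-\infty}^{xt^{-1/3}}A(y)\,dy$ by a single integration by parts against the oscillation. Second, the jump you compute is $3\mathcal{I}_{-2/3}g(t)$, whereas the statement prints $3\mathcal{I}_{2/3}g(t)$; your value is the correct one --- it matches the forcing identity \eqref{forcingk}, the paper's own later computation of the one-sided limits of $\partial_x^2\mathcal{V}g$, and the stated jump $3\mathcal{I}_{-1/3}g$ of $\partial_x\mathcal{V}^{-1}g=\partial_x^2\mathcal{V}\mathcal{I}_{1/3}g$ --- so the subscript in the lemma (like the stray $\|f\|_{H^{k+2}}$, which should read $\|g\|_{H^{k+2}}$) is a typographical slip in the paper rather than a defect of your proof.
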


Since $A(0)=\frac{1}{3\Gamma\big(\frac{2}{3}\big)}$ from  \eqref{lk} we have that $\mathcal{V}g(0,t)=g(t).$
Thus, if we set
\begin{equation}
v(x,t)=e^{-t\partial_x^3}\phi(x)+\mathcal{V}\big(g-e^{-\cdot\partial_x^3}\phi\big|_{x=0}\big)(x,t),
\end{equation}
then $u$ solves the linear problem 
\begin{equation}
\begin{cases}
v_t(x,t)+v_{xxx}(x,t)=0& \text{for}\quad (x,t)\in \R^*\times\mathbb{R},\\
v(x,0)=\phi(x)& \text{for}\quad x\in\mathbb{R},\\
v(0,t)=g(t)& \text{for}\quad t\in(0, +\infty),
\end{cases}
\end{equation}
in the sense of distributions, and then this would suffice to solve the IBVP on the right half-line associated to linear KdV equation.

For the linear IBVP on the left half-line associated to the KdV equation, which has
two boundary conditions, it is considered the second boundary forcing operator associated to the linear KdV equation:
\begin{equation}
\mathcal{V}^{-1}g(x,t)=\partial_x\mathcal{V}\mathcal{I}_{1/3}g(x,t)=3\int_0^tA'\left( \frac{x}{(t-t')^{1/3}}\right)\frac{\mathcal{I}_{-1/3}g(t')}{(t-t')^{2/3}}dt'.
\end{equation}
From Lemma \ref{lemacrb1}, for all $g\in C_0^{\infty}(\mathbb{R}^+)$ the function $\mathcal{V}^{-1}g(x,t)$ is continuous in $x$ on $x\in\mathbb{R}$; moreover using that $A'(0)=-\frac{1}{3\Gamma(\frac{1}{3})}$ we get the relation  $\mathcal{V}^{-1}g(0,t)=-g(t).$

Also, the definition of $\mathcal{V}^{-1}g(x,t)$ allows us to ensure that
\begin{equation}
\begin{cases}
(\partial_t+\partial_x^3)\mathcal{V}^{-1}g(x,t)=3\delta_0'(x)\mathcal{I}_{-\frac{1}{3}}g& \text{for}\quad (x,t)\in \mathbb{R}\times\mathbb{R},\\
\mathcal{V}^{-1}g(x,0)=0& \text{for}\quad x\in\mathbb{R},
\end{cases}
\end{equation}
in the sense of distributions.

Furthermore, Lemma \ref{lemacrb1} implies that the function $\partial_x\mathcal{V}f(x,t)$ is continuous in $x$ for all $x\in\mathbb{R}$ and, since $A'(0)=-\frac{1}{3\Gamma(\frac{1}{3})}$,
\begin{equation}
\partial_x\mathcal{V}g(0,t)=-\mathcal{I}_{-\frac{1}{3}}g(t).
\end{equation}
Also,  $\partial_x\mathcal{V}^{-1}g(x,t)=\partial_x^2\mathcal{V}\mathcal{I}_{\frac{1}{3}}g(x,t)$ is continuous in $x$ for $x\neq 0$ and has a step discontinuity of size $3\mathcal{I}_{-\frac{1}{3}}g(t)$ at $x=0$. Indeed,
\begin{eqnarray*}
	\lim_{x\rightarrow 0^{+}}\partial_x^2\mathcal{V}g(x,t)&=&-\int_{0}^{+\infty}\partial_y^3\mathcal{V}g(y,t)dy=\int_{0}^{+\infty}\partial_t\mathcal{V}g(y,t)dy\\
	&=&3\int_{0}^{+\infty}A(y)dy\int_0^t\partial_t\mathcal{I}_{-\frac{2}{3}}g(t')dt'=\mathcal{I}_{-\frac{2}{3}}g(t),
\end{eqnarray*}
then from Lemma \ref{lemacrb1} -(b) we have
\begin{equation*}
\lim_{x\rightarrow 0^{-}}\partial_x\mathcal{V}^{-1}g(x,t)=-2\mathcal{I}_{-\frac{1}{3}}g(t)\quad \text{and}\quad  \lim_{x\rightarrow 0^{+}}\partial_x\mathcal{V}^{-1}g(x,t)=\mathcal{I}_{-\frac{1}{3}}g(t).
\end{equation*}
On the other hand, given $h_1(t)$ and $h_2(t)$ belonging to $C_0^{\infty}(\mathbb{R}^+)$ we have the relations:
\begin{eqnarray*}
	\mathcal{V}h_1(0,t)+\mathcal{V}^{-1}h_2(0,t)&=&h_1(t)-h_2(t),\\
	\lim_{x\rightarrow 0^{-}}\mathcal{I}_{\frac{1}{3}}\partial_x(\mathcal{V}h_1(x,\cdot)+\partial_x\mathcal{V}^{-1}h_2(x,\cdot))(t)&=&-h_1(t)-2h_2(t),\\
	\lim_{x\rightarrow 0^{+}}\mathcal{I}_{\frac{1}{3}}\partial_x(\mathcal{V}h_1(x,\cdot)+\partial_x\mathcal{V}^{-1}
	h_2(x,\cdot))(t)&=&-h_1(t)+h_2(t).
\end{eqnarray*}

For given $v_0(x)$, $g(t)$ and  $h(t)$ we assigned

\[ \Bigg[\begin{array}{c}
h_1 \vspace{0.3cm}\\
h_2 \end{array} \Bigg]:=\frac{1}{3}
\Bigg[\begin{array}{cc}
2  & -1 \vspace{0.3cm}\\
-1 & -1 \end{array} \Bigg]
\Bigg[\begin{array}{c}
g-e^{-\cdot\partial_x^3}v_0|_{x=0}\vspace{0.3cm}\\
\mathcal{I}_{\frac{1}{3}}\big(h-\partial_xe^{-\cdot\partial_x^3}v_0|_{x=0}\big)\end{array} \Bigg].\]
So, taking  $v(x,t)=e^{-t\partial_x^3}v_0(x)+\mathcal{V}h_1(x,t)+\mathcal{V}^{-1}h_2(x,t)$ we get
\begin{equation}\label{deltaprime}
\begin{cases}
v_t(x,t)+v_{xxx}(x,t)=0& \text{for}\quad (x,t)\in \mathbb{R}^*\times\mathbb{R},\\
v(x,0)=v_0(x)& \text{for}\quad x\in\mathbb{R},\\
v(0,t)=g(t)& \text{for}\quad t\in \R ,\\
\lim\limits_{x \rightarrow 0^{-}}\partial_xv(x,t)=h(t)& \text{for}\quad t\in \R,
\end{cases}
\end{equation}
in the sense of distributions.
\subsection{The Duhamel Boundary Forcing Operator Classes associated to linear KdV equation}

Now, we define the generalization of operators $\mathcal{V}$ and $\mathcal{V}^{-1}$ given by Holmer \cite{Holmerkdv}. Consider $\lambda\in \mathbb{C}$ with 
$\text{Re}\,\lambda>-3$ and $g\in C_0^{\infty}(\mathbb{R}^+)$. Define the operators
\begin{equation*}
\mathcal{V}_{-}^{\lambda}g(x,t)=\left[\frac{x_+^{\lambda-1}}{\Gamma(\lambda)}*\mathcal{V}\big(\mathcal{I}_{-\frac{\lambda}{3}}g\big)(\cdot,t)   \right](x)
\end{equation*}
 and
\begin{equation*}
\mathcal{V}_{+}^{\lambda}g(x,t)=\left[\frac{x_-^{\lambda-1}}{\Gamma(\lambda)}*\mathcal{V}\big(\mathcal{I}_{-\frac{\lambda}{3}}g\big)(\cdot,t)   \right](x),
\end{equation*}
with $\frac{x_{-}^{\lambda-1}}{\Gamma(\lambda)}=e^{i\pi \lambda}\frac{(-x)_{+}^{\lambda-1}}{\Gamma(\lambda)}$. Then, 
using \eqref{forcingk} we have that
\begin{equation}\label{kdvlambda-}
(\partial_t+\partial_x^3)\mathcal{V}_{-}^{\lambda}g(x,t)=3\frac{x_{+}^{\lambda-1}}{\Gamma(\lambda)}\mathcal{I}_{-\frac{2}{3}-\frac{\lambda}{3}}g(t)
\end{equation}
and
\begin{equation}\label{kdvlambda+}
(\partial_t+\partial_x^3)\mathcal{V}_{+}^{\lambda}g(x,t)=3\frac{x_{-}^{\lambda-1}}{\Gamma(\lambda)}\mathcal{I}_{-\frac{2}{3}-\frac{\lambda}{3}}g(t).
\end{equation}

\begin{lemma}[\textbf{Spatial continuity and decay properties for} $\boldsymbol{\mathcal{V}_{\pm}^{\lambda}g(x,t)}$]\label{holmer1}
	Let $g\in C_0^{\infty}(\mathbb{R}^+)$ and fix $t\geq 0$. Then, we have
	\begin{equation*}
		\mathcal{V}_{\pm}^{-2}g=\partial_x^2\mathcal{V}\mathcal{I}_{\frac{2}{3}}g,\quad \mathcal{V}_{\pm}^{-1}g=\partial_x\mathcal{V}\mathcal{I}_{\frac{1}{3}}g\quad \text{and}\quad \mathcal{V}_{\pm}^{0}g=\mathcal{V}g.
	\end{equation*}
	Also, $\mathcal{V}_{\pm}^{-2}g(x,t)$ has a step discontinuity of size $3g(t)$ at $x=0$, otherwise for $x\neq 0$, $\mathcal{V}_{ \pm}^{-2}g(x,t)$ is continuous in $x$. For $\lambda>-2$, $\mathcal{V}_{\pm}^{\lambda}g(x,t)$ is continuous in $x$ for all $x\in\mathbb{R}$. For $-2\leq\lambda\leq 1$ and  $0\leq t\leq 1$, $\mathcal{V}_{\pm}^{\lambda}g(x,t)$ satisfies the following decay bounds:
		\begin{align*}
	&|\mathcal{V}_{-}^{\lambda}g(x,t)|\leq c_{m,\lambda,g}\langle x\rangle^{-m},\; \text{for all}\; x\leq 0\; \text{and} \;m\geq0,\\ 
	&|\mathcal{V}_{-}^{\lambda}g(x,t)|\leq c_{\lambda,g}\langle x\rangle^{\lambda-1}\; \text{for all}\;  x\geq 0.\\
	&|\mathcal{V}_{+}^{\lambda}g(x,t)|\leq c_{m,\lambda,g}\langle x\rangle^{-m},\; \text{for all}\; x\geq 0\; \text{and} \;m\geq0,\\ \intertext{and}
	&|\mathcal{V}_{+}^{\lambda}g(x,t)|\leq c_{\lambda,g}\langle x\rangle^{\lambda-1}\; \text{for all}\;  x\leq 0.
	\end{align*}
	
	\end{lemma}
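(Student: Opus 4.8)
The plan is to reduce every assertion to the fine properties of the base operator $\mathcal{V}$ already recorded in Lemma \ref{lemacrb1}, exactly as in Holmer's treatment of the corresponding KdV operators. Throughout one notes that $\mathcal{I}_{-\lambda/3}g\in C_0^\infty(\mathbb{R}^+)$, so Lemma \ref{lemacrb1} applies verbatim with $g$ replaced by $\mathcal{I}_{-\lambda/3}g$.

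First I would dispatch the special values and the step discontinuity. The identities $\mathcal{V}_\pm^0 g=\mathcal{V}g$, $\mathcal{V}_\pm^{-1}g=\partial_x\mathcal{V}\mathcal{I}_{1/3}g$ and $\mathcal{V}_\pm^{-2}g=\partial_x^2\mathcal{V}\mathcal{I}_{2/3}g$ follow, exactly as the relations $\mathcal{L}_\pm^0 f=\mathcal{L}f$ and $\mathcal{L}_\pm^{-1}f=\partial_x\mathcal{L}\mathcal{I}_{1/2}f$ were obtained below \eqref{defnova2}, by inserting the distributional value $\frac{x_\pm^{\lambda-1}}{\Gamma(\lambda)}\big|_{\lambda=0}=\delta_0$ together with the first and second distributional derivatives of these kernels arising at $\lambda=-1,-2$ into the convolution definitions, and using that $\mathcal{I}_{-\lambda/3}$ reduces to $\mathrm{Id},\mathcal{I}_{1/3},\mathcal{I}_{2/3}$ at $\lambda=0,-1,-2$. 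Granting this, the jump of $\mathcal{V}_\pm^{-2}g=\partial_x^2\mathcal{V}\mathcal{I}_{2/3}g$ at $x=0$ is read off from the step relation for $\partial_x^2\mathcal{V}$ in Lemma \ref{lemacrb1}(b) together with the composition law $\mathcal{I}_\alpha\mathcal{I}_\beta=\mathcal{I}_{\alpha+\beta}$, which turns the jump into precisely $3g(t)$; away from the origin $\partial_x^2\mathcal{V}$ is continuous, so $\mathcal{V}_\pm^{-2}g$ is continuous for $x\neq0$.

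The key tool for the remaining range $\lambda>-2$ is an integration-by-parts representation. Starting from the absolutely convergent formula valid for $\mathrm{Re}\,\lambda>0$ (the analogue of \eqref{classe2}),
\begin{equation*}
\mathcal{V}_-^\lambda g(x,t)=\frac{1}{\Gamma(\lambda)}\int_{-\infty}^x (x-y)^{\lambda-1}\,\mathcal{V}(\mathcal{I}_{-\lambda/3}g)(y,t)\,dy,
\end{equation*}
I would integrate by parts twice in $y$, transferring derivatives onto $\mathcal{V}(\mathcal{I}_{-\lambda/3}g)$. The boundary terms vanish by the rapid spatial decay of $\mathcal{V}$ and $\partial_x\mathcal{V}$ (Lemma \ref{lemacrb1}(a)), and no $\delta$-contribution appears because $\partial_x\mathcal{V}(\mathcal{I}_{-\lambda/3}g)$ is continuous across $x=0$. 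This yields
\begin{equation*}
\mathcal{V}_-^\lambda g(x,t)=\frac{1}{\Gamma(\lambda+2)}\int_{-\infty}^x (x-y)^{\lambda+1}\,\partial_y^2\mathcal{V}(\mathcal{I}_{-\lambda/3}g)(y,t)\,dy,
\end{equation*}
which is now meaningful for $\mathrm{Re}\,\lambda>-2$ since $\lambda+1>-1$. Continuity of $\mathcal{V}_-^\lambda g$ in $x$ for $\lambda>-2$ then follows by dominated convergence, using the bound $|\partial_y^2\mathcal{V}(\mathcal{I}_{-\lambda/3}g)(y,t)|\le c_k\langle y\rangle^{-k}$ from Lemma \ref{lemacrb1}(b); the same argument applies to $\mathcal{V}_+^\lambda$, so that only at $\lambda=-2$, where $(x-y)^{\lambda+1}=(x-y)^{-1}$ fails to be locally integrable, does the jump survive.

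Finally, the decay bounds follow from this representation after the substitution $z=x-y$. For $x\le0$ one has $x-z\le0$ and $\langle x-z\rangle=\langle x\rangle+z$, so inserting $|\partial_y^2\mathcal{V}(\cdot)(x-z,t)|\le c_N\langle x-z\rangle^{-N}$ and rescaling $z=\langle x\rangle w$ bounds the integral by $c\langle x\rangle^{\lambda+2-N}$, i.e.\ by $\langle x\rangle^{-m}$ for every $m$ upon choosing $N$ large. The bound for $x\ge0$ is the delicate point: after splitting at $z=x/2$ the region $x-z\ge x/2$ is negligible by rapid decay, but on the remaining region the naive estimate only gives $\langle x\rangle^{\lambda+1}$. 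To recover the sharp exponent one must exploit the two vanishing moments
\begin{equation*}
\int_{\mathbb{R}}\partial_y^2\mathcal{V}(\mathcal{I}_{-\lambda/3}g)(y,t)\,dy=0,\qquad \int_{\mathbb{R}}y\,\partial_y^2\mathcal{V}(\mathcal{I}_{-\lambda/3}g)(y,t)\,dy=0,
\end{equation*}
which hold because $\mathcal{V}(\mathcal{I}_{-\lambda/3}g)$ and $\partial_y\mathcal{V}(\mathcal{I}_{-\lambda/3}g)$ are continuous and rapidly decaying, so the boundary terms in the defining integrations by parts vanish. Taylor expanding the kernel $(x-y)^{\lambda+1}$ to second order about $y=0$, the constant and linear terms are annihilated by these moments, and the quadratic term produces the claimed $c_{\lambda,g}\langle x\rangle^{\lambda-1}$, the restriction $\lambda\le1$ being exactly what keeps the resulting exponent from being positive. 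This moment cancellation is the main obstacle; once it is set up, the two decay estimates are routine, and the analogous bounds for $\mathcal{V}_+^\lambda$ (with the roles of $x\to\pm\infty$ interchanged) are proved the same way.
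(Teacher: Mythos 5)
Your argument is correct and follows the same route as the source the paper relies on: the paper gives no proof of Lemma \ref{holmer1} itself, deferring entirely to \cite{Holmerkdv}, and your reconstruction --- the convolution identities at $\lambda=0,-1,-2$, the twice-integrated-by-parts representation with kernel $(x-y)^{\lambda+1}$ against $\partial_y^2\mathcal{V}(\mathcal{I}_{-\lambda/3}g)$ (the exact analogue of \eqref{defnova}--\eqref{defnova2}), dominated convergence for the continuity when $\lambda>-2$, and the two-moment cancellation for the $\langle x\rangle^{\lambda-1}$ decay on $x\ge 0$ --- is precisely Holmer's scheme. One small remark: your derivation of the jump $3g(t)$ correctly uses that $\partial_x^2\mathcal{V}h$ has a step of size $3\mathcal{I}_{-2/3}h(t)$ at $x=0$ (as forced by \eqref{forcingk} and as the paper itself uses later when computing $\lim_{x\to 0^{\pm}}\partial_x\mathcal{V}^{-1}g$), rather than the $3\mathcal{I}_{2/3}h(t)$ misprinted in the statement of Lemma \ref{lemacrb1}(b).
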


\begin{lemma}[\textbf{Values of} $\boldsymbol{\mathcal{V}_{\pm}^{\lambda}g(x,t)}$ \textbf{at} $\boldsymbol{x=0}$]\label{holmer2}
	For $\emph{Re}\,\lambda>-2$ and $g\in C_0^{\infty}(\R^+)$ we have 
	\begin{align*}
	&\mathcal{V}_{-}^{\lambda}g(0,t)=2\sin \left(\frac{\pi}{3}\lambda+\frac{\pi}{6}\right)g(t)\\ 
	\intertext{and}
	&\mathcal{V}_{+}^{\lambda}g(0,t)=e^{i\pi\lambda}g(t).
	\end{align*}
\end{lemma}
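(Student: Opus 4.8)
The plan is to evaluate the spatial convolutions defining $\mathcal{V}_{\pm}^{\lambda}g$ at $x=0$ by passing to the Fourier side in $x$, where each factor has an explicit transform, and then to recover the full range $\text{Re}\,\lambda>-2$ by analytic continuation. I would first fix $t$ and restrict to the strip $0<\text{Re}\,\lambda<1$, in which all the integrals below converge absolutely and, by Lemma \ref{holmer1}, $\mathcal{V}_{\pm}^{\lambda}g(\cdot,t)$ is continuous at $x=0$; the general case then follows because both sides of each claimed identity are analytic in $\lambda$ on $\text{Re}\,\lambda>-2$. Using \eqref{transformada} in the $x$-variable, $\mathcal{F}_x\big(\tfrac{x_+^{\lambda-1}}{\Gamma(\lambda)}\big)(\xi)=e^{-i\pi\lambda/2}(\xi-i0)^{-\lambda}$, and reading off from \eqref{lk}--\eqref{forcingk} that $\mathcal{F}_x[\mathcal{V}(\mathcal{I}_{-\lambda/3}g)(\cdot,t)](\xi)=3\int_0^t e^{i(t-t')\xi^3}\mathcal{I}_{-\frac23-\frac{\lambda}{3}}g(t')\,dt'$, Fourier inversion at $x=0$ gives
\[
\mathcal{V}_{-}^{\lambda}g(0,t)=\frac{3e^{-i\pi\lambda/2}}{2\pi}\int_0^t \mathcal{I}_{-\frac23-\frac{\lambda}{3}}g(t')\Big[\int_{\mathbb{R}}(\xi-i0)^{-\lambda}e^{i(t-t')\xi^3}\,d\xi\Big]\,dt',
\]
with the same formula for $\mathcal{V}_{+}^{\lambda}$ after replacing $e^{-i\pi\lambda/2}(\xi-i0)^{-\lambda}$ by the transform $e^{i\pi\lambda/2}(-\xi-i0)^{-\lambda}$ of $\tfrac{x_-^{\lambda-1}}{\Gamma(\lambda)}$.

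Next I would decouple the $\xi$-integral from the time integral. Scaling $\xi=(t-t')^{-1/3}\eta$ shows that $\int_{\mathbb{R}}(\xi-i0)^{-\lambda}e^{i(t-t')\xi^3}\,d\xi=(t-t')^{(\lambda-1)/3}C_\lambda$, where $C_\lambda:=\int_{\mathbb{R}}(\eta-i0)^{-\lambda}e^{i\eta^3}\,d\eta$ is independent of $t,t'$. The remaining time integral is a Riemann--Liouville composition: since $\tfrac{\lambda+2}{3}-1=\tfrac{\lambda-1}{3}$,
\[
\int_0^t (t-t')^{(\lambda-1)/3}\mathcal{I}_{-\frac23-\frac{\lambda}{3}}g(t')\,dt'=\Gamma\Big(\tfrac{\lambda+2}{3}\Big)\,\mathcal{I}_{\frac{\lambda+2}{3}}\mathcal{I}_{-\frac23-\frac{\lambda}{3}}g(t)=\Gamma\Big(\tfrac{\lambda+2}{3}\Big)g(t),
\]
using $\mathcal{I}_\alpha\mathcal{I}_\beta=\mathcal{I}_{\alpha+\beta}$ and $\mathcal{I}_0=\mathrm{id}$. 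Hence $\mathcal{V}_{-}^{\lambda}g(0,t)=\tfrac{3}{2\pi}e^{-i\pi\lambda/2}C_\lambda\,\Gamma(\tfrac{\lambda+2}{3})\,g(t)$, so everything reduces to the single constant $C_\lambda$.

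The main obstacle is the explicit evaluation of $C_\lambda$, which demands care with the branch of $(\eta-i0)^{-\lambda}$. Writing $(\eta-i0)^{-\lambda}=\eta^{-\lambda}$ for $\eta>0$ and $(\eta-i0)^{-\lambda}=e^{i\pi\lambda}|\eta|^{-\lambda}$ for $\eta<0$ (the argument tends to $-\pi$), reflecting the negative half-line, and substituting $u=\eta^3$, I would obtain
\[
C_\lambda=\int_0^\infty \eta^{-\lambda}e^{i\eta^3}\,d\eta+e^{i\pi\lambda}\int_0^\infty \eta^{-\lambda}e^{-i\eta^3}\,d\eta=\tfrac13\Gamma(s)\big(e^{i\pi s/2}+e^{i\pi\lambda}e^{-i\pi s/2}\big),\qquad s=\tfrac{1-\lambda}{3},
\]
where each one-sided integral is computed by rotating the contour to the imaginary axis, $\int_0^\infty u^{s-1}e^{\pm iu}\,du=\Gamma(s)e^{\pm i\pi s/2}$. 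Factoring out $e^{i\pi\lambda/2}$ gives $C_\lambda=\tfrac{2}{3}\Gamma(s)e^{i\pi\lambda/2}\cos\big(\tfrac{\pi(1-4\lambda)}{6}\big)$. Substituting back, noting $\tfrac{\lambda+2}{3}=1-s$, the phases $e^{\mp i\pi\lambda/2}$ cancel and the reflection formula $\Gamma(s)\Gamma(1-s)=\pi/\sin(\pi s)$ yields $\mathcal{V}_{-}^{\lambda}g(0,t)=\cos\big(\tfrac{\pi(1-4\lambda)}{6}\big)\big/\sin\big(\tfrac{\pi(1-\lambda)}{3}\big)\,g(t)$. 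The proof then closes with the identities $\cos\big(\tfrac{\pi(1-4\lambda)}{6}\big)=\sin\big(\tfrac{\pi(1+2\lambda)}{3}\big)=2\sin\big(\tfrac{\pi(1-\lambda)}{3}\big)\cos\big(\tfrac{\pi(1-\lambda)}{3}\big)$, which reduce the quotient to $2\cos\big(\tfrac{\pi(1-\lambda)}{3}\big)=2\sin\big(\tfrac{\pi}{3}\lambda+\tfrac{\pi}{6}\big)$, as claimed.

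Finally, the $\mathcal{V}_{+}^{\lambda}$ identity follows from the same scheme: the transform $e^{i\pi\lambda/2}(-\xi-i0)^{-\lambda}$ replaces $C_\lambda$ by $\widetilde{C}_\lambda=\int_{\mathbb{R}}(-\eta-i0)^{-\lambda}e^{i\eta^3}\,d\eta=\int_{\mathbb{R}}(\eta-i0)^{-\lambda}e^{-i\eta^3}\,d\eta$, and the analogous branch computation gives $\widetilde{C}_\lambda=\tfrac{2}{3}\Gamma(s)e^{i\pi\lambda/2}\cos\big(\tfrac{\pi(1+2\lambda)}{6}\big)$. Carrying this through leaves $\mathcal{V}_{+}^{\lambda}g(0,t)=e^{i\pi\lambda}\cos\big(\tfrac{\pi(1+2\lambda)}{6}\big)\big/\sin\big(\tfrac{\pi(1-\lambda)}{3}\big)\,g(t)$, and since $\cos\big(\tfrac{\pi(1+2\lambda)}{6}\big)=\sin\big(\tfrac{\pi(1-\lambda)}{3}\big)$ the quotient equals $1$, giving $\mathcal{V}_{+}^{\lambda}g(0,t)=e^{i\pi\lambda}g(t)$. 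Analytic continuation in $\lambda$ then extends both identities to all $\text{Re}\,\lambda>-2$.
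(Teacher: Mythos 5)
Your computation is correct, and it supplies an actual proof where the paper gives none: the text simply defers to \cite{Holmerkdv} for Lemmas \ref{holmer1} and \ref{holmer2}. I checked the chain of identities: the Fourier transform $e^{-i\pi\lambda/2}(\xi-i0)^{-\lambda}$ of $x_+^{\lambda-1}/\Gamma(\lambda)$ matches \eqref{transformada} with the paper's convention, the scaling $\xi=(t-t')^{-1/3}\eta$ correctly decouples $C_\lambda$, the Riemann--Liouville composition $\mathcal{I}_{(\lambda+2)/3}\mathcal{I}_{-2/3-\lambda/3}=\mathcal{I}_0$ is exact, the branch values of $(\eta-i0)^{-\lambda}$ on the two half-lines are right, and the final trigonometric reductions (via $\sin(\pi-x)=\sin x$ and the reflection formula $\Gamma(s)\Gamma(1-s)=\pi/\sin\pi s$ with $s=(1-\lambda)/3$) do yield $2\sin(\tfrac{\pi}{3}\lambda+\tfrac{\pi}{6})$ and $e^{i\pi\lambda}$ respectively; the extra $e^{i\pi\lambda}$ in the paper's convention for $x_-^{\lambda-1}/\Gamma(\lambda)$ is correctly absorbed into the phase $e^{i\pi\lambda/2}$ that combines with the one inside $\widetilde{C}_\lambda$. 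Your route differs from Holmer's only in organization: he stays on the spatial side and evaluates $\frac{1}{\Gamma(\lambda)}\int_0^\infty y^{\lambda-1}A(\mp y)\,dy$ using the Mellin transform of the Airy function, whereas you pass to the Fourier side; the two are the same computation after Parseval, and both end at the same Gamma-function and trigonometric identities. Three small points deserve a sentence in a polished write-up: the Fresnel-type integrals $\int_0^\infty u^{s-1}e^{\pm iu}\,du$ converge only conditionally (not absolutely) on $0<\mathrm{Re}\,s<1$, so the interchange of the $\xi$- and $t'$-integrations should be justified by a standard regularization (e.g.\ inserting $e^{-\varepsilon\xi^2}$ and letting $\varepsilon\to0$, or rotating the contour before interchanging); the spatial convolution itself converges absolutely thanks to the one-sided rapid decay in Lemma \ref{holmer1}, which is worth citing explicitly; and the final analytic-continuation step requires observing that $\lambda\mapsto\mathcal{V}_{\pm}^{\lambda}g(0,t)$ is analytic on $\mathrm{Re}\,\lambda>-2$, which follows from the analyticity of the distribution family $x_\pm^{\lambda-1}/\Gamma(\lambda)$ paired against the smooth, appropriately decaying function $\mathcal{V}(\mathcal{I}_{-\lambda/3}g)(\cdot,t)$. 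None of these affects the validity of the argument.
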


Now, as in \cite{Holmer} we shall solve the IVP forced \eqref{T3} with the distributions 	$\frac{x_{+}^{\lambda}}{\Gamma(\lambda)}$ by the using of the classes $\mathcal{V}_{-}^{\lambda}$.
	
	Let $-1<\lambda_1,\lambda_2<1$, $h_1,h_2\in C_0^{\infty}(\mathbb{R}^+)$ and set 
	\begin{equation}\label{probleft}
	v(x,t)=\mathcal{V}_{ -}^{\lambda_1}h_1(x,t)+\mathcal{L}_{ -}^{\lambda_2}h_2(x,t).
	\end{equation}
	By Lemma \ref{holmer1}, $u(x,t)$ is continuous in $x$ and Lemma \ref{holmer2} implies
	\begin{align*}
	& v(0,t)=2 \text{sin}\left(\frac{\pi}{3}\lambda_1+\frac{\pi}{6}\right)h_1(t)+2 \text{sin}\left(\frac{\pi}{3}\lambda_1+\frac{\pi}{6}\right)h_2(t)\intertext{and}\\ 
	&v_x(x,t)=\mathcal{V}_{ -}^{\lambda_1-1}\mathcal{I}_{-\frac{1}{3}}h_1(t)+\mathcal{V}_{ -}^{\lambda_2-1}\mathcal{I}_{-\frac{1}{3}}h_2(t).
	\end{align*}
	
	Also, by Lemma \ref{holmer1}, $v_x(x,t)$ is continuous in $x$ and Lemma \ref{holmer2} gives us that
	\begin{equation}
	\mathcal{I}_{\frac13}v_x(0,t)=2 \text{sin}\left(\frac{\pi}{3}\lambda_1-\frac{\pi}{6}\right)h_1(t)+2 \text{sin}\left(\frac{\pi}{3}\lambda_1-\frac{\pi}{6}\right)h_2(t).
	\end{equation}
	Therefore, we have
	\begin{equation}\label{Determinant-2x2}
	\left[\begin{array}{c}
	v(0,t)\vspace{0.3cm}\\
	\mathcal{I}_{1/3}v_x(0,t)
	\end{array}\right]=2
	\left[\begin{array}{cc}
	\text{sin}\left(\frac{\pi}{3}\lambda_1+\frac{\pi}{6}\right)&
	\text{sin}\left(\frac{\pi}{3}\lambda_2+\frac{\pi}{6}\right)\vspace{0.3cm}\\
	\text{sin}\left(\frac{\pi}{3}\lambda_1-\frac{\pi}{6}\right)& \text{sin}\left(\frac{\pi}{3}\lambda_2-\frac{\pi}{6}\right)\end{array}\right]\left[\begin{array}{r}
	h_1(t)\vspace{0.3cm}\\
	h_2(t)
	\end{array}\right].
	\end{equation}
Notice that the determinant of the $2 \times 2$  matrix in \eqref{Determinant-2x2} is given by the expression $\sqrt{3}\,\text{sin}\left(\frac{\pi}{3}(\lambda_2-\lambda_1)\right)$, which is nonzero if $\lambda_1-\lambda_2\neq 3n$ for $n\in \mathbb{Z}$. Thus, for any $-1<\lambda_1,\lambda_2<1$, with $\lambda_1\neq \lambda_2$ we set
	\begin{equation*}
	\left[\begin{array}{c}
	h_1(t)\vspace{0.3cm}\\
	h_2(t)
	\end{array}\right]=A\left[\begin{array}{c}
	g(t)\vspace{0.3cm}\\
	\mathcal{I}_{1/3}h(t)
	\end{array}\right],
	\end{equation*}
	where
	\begin{equation}
	A=\frac{1}{2\sqrt{3}\,\text{sin}\left(\frac{\pi}{3}(\lambda_2-\lambda_1)\right)}\left[\begin{array}{cc}
	\text{sin}\left(\frac{\pi}{3}\lambda_2-\frac{\pi}{6}\right)& -\text{sin}\left(\frac{\pi}{3}\lambda_2+\frac{\pi}{6}\right)\vspace{0.3cm}\\
	-\text{sin}\left(\frac{\pi}{3}\lambda_1-\frac{\pi}{6}\right)& \text{sin}\left(\frac{\pi}{3}\lambda_1+\frac{\pi}{6}\right)\end{array}\right].
	\end{equation}
and hence $v(x,t)$ solves \eqref{T3}.
	
We finish this section with further useful estimates, proved in \cite{Holmerkdv}, for the operators $\mathcal{V}_{\pm}^{\lambda}$.
 
\begin{lemma}\label{cof}
Let $k\in\mathbb{R}$.  The following estimates are ensured:\medskip
	\begin{enumerate}
		\item[(a)] (\textbf{space traces}) $\|\mathcal{V}_{\pm}^{\lambda}g(x,t)\|_{\mathcal{C}\big(\mathbb{R}_t;\,H^k(\R_x)\big)}\leq c \|g\|_{H_0^{(k+1)/3}(\mathbb{R}^+)}$ for all $k-\frac{5}{2}<\lambda<k+\frac{1}{2}$, $\lambda<\frac{1}{2}$  and $\emph{supp}(g)\subset[0,1]$.\medskip

		\item[(b)] (\textbf{time traces})
		$\|\psi(t)\mathcal{V}_{\pm}^{\lambda}g(x,t)\|_{\mathcal{C}\big(\mathbb{R}_x;\,H_0^{(k+1)/3}(\mathbb{R}_t^+)\big)}\leq c \|g\|_{H_0^{(k+1)/3}(\mathbb{R}^+)}$ for all $-2<\lambda<1$.\medskip

		\item[(c)] (\textbf{derivative time traces})
		$\big \|\psi(t)\partial_x\mathcal{V}_{\pm}^{\lambda}g(x,t)\big \|_{\mathcal{C}\big(\mathbb{R}_x;\,H_0^{k/3}(\mathbb{R}_t^+)\big)}\leq c \|g\|_{H_0^{(k+1)/3}(\mathbb{R}^+)}$ for all $~{-1<\lambda<2}$.\medskip
	
		\item[(d)] (\textbf{Bourgain spaces}) $
			\big\|\psi(t)\mathcal{V}_{\pm}^{\lambda}g(x,t)\big\|_{Y^{k,b}\cap V^{\alpha}}\leq c \|g\|_{H_0^{(k+1)/3}(\mathbb{R}^+)}
	$ for all $k-1\leq \lambda<k+\frac{1}{2}$, $\lambda<\frac{1}{2}$, $\alpha\leq\frac{s-\lambda+2}{3}$ and  $0\leq b<\frac{1}{2}$.
	\end{enumerate}
\end{lemma}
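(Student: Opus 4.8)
The plan is to treat the two families $\mathcal{V}_{+}^{\lambda}$ and $\mathcal{V}_{-}^{\lambda}$ in parallel, since they differ only in the placement of the spatial kernel $\frac{x_{\pm}^{\lambda-1}}{\Gamma(\lambda)}$ and are handled by the same Fourier-analytic computation. The starting point is an explicit space–time Fourier representation of $\mathcal{V}_{\pm}^{\lambda}g$. Writing $\mathcal{V}$ through its oscillatory kernel as in \eqref{lk}, using \eqref{transformada} for the temporal Riemann–Liouville factor $\mathcal{I}_{-\lambda/3}$, and the analogous distributional Fourier formula for the spatial kernel $\frac{x_{\pm}^{\lambda-1}}{\Gamma(\lambda)}$ (whose transform is a symbol homogeneous of order $-\lambda$), I would derive a closed formula for $\mathcal{F}_{x,t}\big(\psi\,\mathcal{V}_{\pm}^{\lambda}g\big)(\xi,\tau)$ in terms of $\widehat{g}$. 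In this formula the spatial convolution contributes a factor of size $|\xi|^{-\lambda}$, the temporal operators contribute powers of $\langle\tau\rangle$, and the Airy phase produces the characteristic weight $\langle\tau-\xi^3\rangle$. Each of the four estimates is then a matter of reading off the admissible range of $\lambda$ from this representation.

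For the space traces (a) I would fix $t$ and estimate $\big\|\langle\xi\rangle^{k}\mathcal{F}_x\mathcal{V}_{\pm}^{\lambda}g(\xi,t)\big\|_{L^2_\xi}$ uniformly in $t$. Splitting into low and high spatial frequencies, the high-frequency part is controlled by combining the decay bounds of Lemma \ref{holmer1} with the time-trace smoothing of the group (Lemma \ref{grupok}(b)), whereas the low-frequency part requires the singular multiplier $|\xi|^{-\lambda}$ to be square-integrable against $\langle\xi\rangle^{k}$ once that smoothing is accounted for; this is exactly where the two-sided restriction $k-\tfrac{5}{2}<\lambda<k+\tfrac{1}{2}$ together with $\lambda<\tfrac{1}{2}$ originate. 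The temporal factor $\mathcal{I}_{-\lambda/3}$ is absorbed using Lemma \ref{lio-lemaint}, which returns the claimed $H_0^{(k+1)/3}(\mathbb{R}^+)$ norm of $g$.

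For the time traces (b) and derivative time traces (c) I would not argue on the Fourier side but rather use the boundary identities: Lemmas \ref{holmer1} and \ref{holmer2} identify $\mathcal{V}_{\pm}^{\lambda}g$ and $\partial_x\mathcal{V}_{\pm}^{\lambda}g$ at $x=0$ as explicit multiples of $g$ (and of $\mathcal{I}_{-1/3}g$), and guarantee continuity in $x$ on the stated ranges of $\lambda$. Controlling the $x$-dependence by the spatial decay bounds of Lemma \ref{holmer1}, the problem reduces to the temporal regularity of these boundary values, for which the group time-trace estimates Lemma \ref{grupok}(b),(c) supply the correct smoothing and Lemma \ref{lio-lemaint} accounts for the fractional integrations $\mathcal{I}_{-\lambda/3}$ and $\mathcal{I}_{-2/3}$; this yields the index $(k+1)/3$ in (b) and $k/3$ in (c). The ranges $-2<\lambda<1$ and $-1<\lambda<2$ are precisely those on which the relevant trace is continuous and the fractional operators map $H_0^s(\mathbb{R}^+)$ into itself.

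For the Bourgain estimate (d), the most delicate part, I would insert the representation from the first step into the $Y^{k,b}\cap V^{\alpha}$ norm and reduce the square of that norm to a one-dimensional integral. After bounding $|\widehat{g}|$ by its $H_0^{(k+1)/3}$ weight, the remaining $\xi$- (or $\tau$-) integral carries the cubic denominator $\langle\tau-\xi^3\rangle$ and is estimated by the elementary Lemma \ref{lema2}, with Lemma \ref{lemagtv1} used where two resonant weights meet. The restriction $b<\tfrac{1}{2}$ is what makes these integrals converge, while the conditions $k-1\leq\lambda<k+\tfrac{1}{2}$, $\lambda<\tfrac{1}{2}$, and $\alpha\leq\frac{s-\lambda+2}{3}$ are dictated respectively by the high-frequency decay, the low-frequency singularity of $|\xi|^{-\lambda}$, and the $V^{\alpha}$ weight. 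I expect the main obstacle throughout to be this low-frequency singularity of the spatial fractional-integration multiplier: it must be absorbed by the smoothing gained from the dispersion, and tracking that balance sharply enough to reach the stated endpoints of the $\lambda$-range—uniformly in $t$ for (a) and against the Airy weight for (d)—is the genuinely technical point.
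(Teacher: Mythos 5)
The paper does not actually prove Lemma \ref{cof}: it is quoted verbatim from Holmer's work and the text only points to \cite{Holmerkdv} for the details. So the relevant comparison is with Holmer's argument, and against that your plan has one part that is broadly in the right spirit and one part that would fail. For (a) and (d), deriving an explicit space--time Fourier representation of $\mathcal{V}_{\pm}^{\lambda}g$ (the kernel $\frac{x_{\pm}^{\lambda-1}}{\Gamma(\lambda)}$ contributing a multiplier of order $|\xi|^{-\lambda}$, the Riemann--Liouville factors contributing powers of $\langle\tau\rangle$, the Airy phase contributing $\langle\tau-\xi^3\rangle$) and then reading off the admissible range of $\lambda$ is indeed the skeleton of Holmer's proof; but as written it is a plan, not a proof --- none of the integrals that actually produce the thresholds $k-\tfrac52<\lambda<k+\tfrac12$, $\lambda<\tfrac12$, $k-1\le\lambda$ is carried out, and invoking Lemma \ref{grupok}(b) (a time-trace bound for the free group) to control a fixed-time spatial $H^k$ norm in (a) is not the right tool.

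The genuine gap is your treatment of (b) and (c). These estimates assert $\sup_{x\in\mathbb{R}}\|\psi\,\mathcal{V}_{\pm}^{\lambda}g(x,\cdot)\|_{H^{(k+1)/3}(\mathbb{R}_t)}\le c\|g\|_{H_0^{(k+1)/3}(\mathbb{R}^+)}$, i.e.\ a bound uniform over \emph{all} $x$, at the same low temporal regularity as the datum. Knowing the exact boundary values at $x=0$ from Lemma \ref{holmer2} together with the spatial decay bounds of Lemma \ref{holmer1} cannot yield this: the decay bounds are pointwise in $t$ and their constants depend on $g$ through high Sobolev norms (compare Lemma \ref{lemacrb1}, where the constants involve $\|g\|_{H^{k+1}}$ and $\|g\|_{H^{k+2}}$), so they say nothing quantitative about the $H^{(k+1)/3}$-in-time regularity at a generic $x\neq 0$ in terms of $\|g\|_{H_0^{(k+1)/3}}$ alone, and there is no interpolation that recovers it. Holmer's proof of (b) and (c) instead computes $\mathcal{F}_t\bigl[\mathcal{V}_{\pm}^{\lambda}g(x,\cdot)\bigr](\tau)$ explicitly for each fixed $x$, obtaining $\widehat{g}(\tau)$ times an oscillatory multiplier built from the roots of the cubic symbol (a bounded function of $x|\tau|^{1/3}$), and verifies that this multiplier is bounded uniformly in $x$ precisely on the ranges $-2<\lambda<1$ and $-1<\lambda<2$; that uniform multiplier bound is the content of the lemma and is absent from your proposal.
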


\section{\textbf{The Duhamel Inhomogeneous Solution Operators}}\label{sectionduhamel}
The  Duhamel inhomogeneous solution operator $\mathcal{S}$ associated with Schr\"odinger equation is define by 
\begin{equation*}
\mathcal{S}w(x,t)=-i\int_0^te^{i(t-t')\partial_x^2}w(x,t')dt',
\end{equation*}
so that
\begin{equation}\label{nonlinears}
\begin{cases}
(i\partial_t+\partial_x^2)\mathcal{S}w(x,t) =w(x,t)&\text{for}\quad  (x,t)\in\mathbb{R}\times\mathbb{R},\\
\mathcal{S}w(x,0)=0& \text{for}\quad x\in\mathbb{R}.
\end{cases}
\end{equation}
The corresponding inhomogeneous solution operator $\mathcal{K}$ associated to the KdV equation is given by
\begin{equation*}
\mathcal{K}w(x,t)=\int_0^te^{-(t-t')\partial_x^3}w(x,t')dt',
\end{equation*}
thus we have 
\begin{equation}\label{DK}
\begin{cases}
(\partial_t+\partial_x^3)\mathcal{K}w(x,t) =w(x,t)&\text{for}\quad  (x,t)\in\mathbb{R}\times\mathbb{R},\\
\mathcal{K}w(x,0) =0 & \text{for}\quad x\in\mathbb{R}.
\end{cases}
\end{equation}

Now we summarize some useful estimates for the Duhamel inhomogeneous solution operators $\mathcal{S}$ and $\mathcal{K}$  that will be used later in the proof of the main results.

The following lemma is due to Cavalcante and its proof can be seen in \cite{Cavalcante}.

\begin{lemma}\label{duhamel}
Let $s\in \mathbb{R}$. The following estimates are ensured:\medskip
	\begin{enumerate}
		\item[(a)](\textbf{space traces})
		$\|\psi(t)\mathcal{S}w(x,t)\|_{\mathcal{C}\big(\mathbb{R}_t;\,H^s(\mathbb{R}_x)\big)}\leq c\|w\|_{X^{s,d_1}}$ whenever $-\frac{1}{2}<d_1<0$.\medskip
	
		\item[(b)](\textbf{time traces}) For all $-\frac{1}{2}<d_1<0$ we have that
		 \begin{equation*}
		\|\psi(t)\mathcal{S}w(x,t)\|_{\mathcal{C}\big(\mathbb{R}_x;H^{(2s+1)/4}(\mathbb{R}_t)\big)}\lesssim
		\begin{cases}
		\|w\|_{X^{s,d_1}}& \text{if}\; -\frac{1}{2}< s\leq\frac{1}{2},\medskip\\
		(\|w\|_{W^{s,d_1}}+\|w\|_{X^{s,d_1}})& \text{for all}\; s\in\mathbb{R}. 
		\end{cases}
		\end{equation*}
        \medskip
		
		\item[(c)](\textbf{Bourgain spaces estimates}) Let $-\frac{1}{2}<d_1\leq0\leq b\leq d_1+1$, then we have
		\begin{equation*}
		\|\psi(t)\mathcal{S}w(x,t)\|_{X^{s,b}}\leq c \|w\|_{X^{s,d_1}}.
		\end{equation*}
	\end{enumerate}
\end{lemma}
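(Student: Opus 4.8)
The plan is to pass to the Fourier side and reduce every assertion to the linear group estimates of Lemma~\ref{grupo} by means of the Ginibre--Tsutsumi--Velo decomposition of the Duhamel multiplier. Carrying out the $t'$-integration in the definition of $\mathcal{S}$ and writing $\widehat w$ for the space--time Fourier transform of $w$, one gets
\begin{equation*}
\mathcal{F}_x[\mathcal{S}w](\xi,t)=c\int_{\mathbb{R}}\frac{e^{it\tau}-e^{-it\xi^2}}{\tau+\xi^2}\,\widehat w(\xi,\tau)\,d\tau .
\end{equation*}
I would then insert the cutoff $\psi(t)$ and split the multiplier according to $\langle\tau+\xi^2\rangle\le2$ and $\langle\tau+\xi^2\rangle>2$. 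On the low--modulation region the numerator vanishes to first order at $\tau+\xi^2=0$, so after writing $e^{it\tau}-e^{-it\xi^2}=e^{-it\xi^2}\big(e^{it(\tau+\xi^2)}-1\big)$ and expanding the exponential in its Taylor series, the factor $\tau+\xi^2$ cancels and one is left with an absolutely convergent series of terms $\tfrac{(it)^k}{k!}\psi(t)\,e^{it\partial_x^2}g_k$, where $\widehat{g_k}(\xi)=\int_{\langle\tau+\xi^2\rangle\le2}(\tau+\xi^2)^{k-1}\widehat w(\xi,\tau)\,d\tau$. On the high--modulation region I would split into the homogeneous piece carrying $e^{-it\xi^2}$, again of the form $\psi(t)e^{it\partial_x^2}g$ with $\widehat g(\xi)=\int_{\langle\tau+\xi^2\rangle>2}\tfrac{\widehat w(\xi,\tau)}{\tau+\xi^2}\,d\tau$, and the genuinely inhomogeneous piece $R$ carrying the factor $e^{it\tau}$, whose space--time Fourier transform is $\tfrac{\widehat w(\xi,\tau)}{\tau+\xi^2}\chi_{\langle\tau+\xi^2\rangle>2}$ (up to the harmless convolution in $\tau$ coming from $\psi$).

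For the group--type terms the three assertions would follow immediately from parts (a), (b) and (c) of Lemma~\ref{grupo}, once the auxiliary data $g_k$ and $g$ are shown to lie in $H^s(\mathbb{R})$ with the right norm. This is where the modulation restriction enters: by Cauchy--Schwarz in $\tau$,
\begin{equation*}
|\widehat g(\xi)|^2\lesssim\Big(\int_{\langle\tau+\xi^2\rangle>2}\frac{d\tau}{\langle\tau+\xi^2\rangle^{2+2d_1}}\Big)\int\langle\tau+\xi^2\rangle^{2d_1}|\widehat w(\xi,\tau)|^2\,d\tau ,
\end{equation*}
and the first integral is finite precisely when $2+2d_1>1$, i.e. $d_1>-\tfrac12$; multiplying by $\langle\xi\rangle^{2s}$ and integrating in $\xi$ gives $\|g\|_{H^s}\lesssim\|w\|_{X^{s,d_1}}$. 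The low--modulation coefficients $g_k$ are handled identically, the bounded factor $\langle\tau+\xi^2\rangle^{k-1}$ on $\langle\tau+\xi^2\rangle\le2$ together with the $1/k!$ making the series converge.

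It then remains to estimate the inhomogeneous remainder $R$, whose full transform is $\tfrac{\widehat w(\xi,\tau)}{\tau+\xi^2}\chi_{\langle\tau+\xi^2\rangle>2}$. For (c) this is direct: since $\langle\tau+\xi^2\rangle^{2b}/|\tau+\xi^2|^2\sim\langle\tau+\xi^2\rangle^{2b-2}\le\langle\tau+\xi^2\rangle^{2d_1}$ on the support exactly when $b\le d_1+1$, one reads off $\|R\|_{X^{s,b}}\lesssim\|w\|_{X^{s,d_1}}$. For (a) I would bound $\sup_t\|R(\cdot,t)\|_{H^s_x}$ by the $L^1_\tau$ norm of its transform and apply the same Cauchy--Schwarz step, again converging for $d_1>-\tfrac12$. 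The delicate point is the time--trace estimate (b): the target $H^{(2s+1)/4}_t$ forces the weight $\langle\tau\rangle^{(2s+1)/4}$, and on $\tau\approx-\xi^2$ this behaves like $\langle\xi\rangle^{(2s+1)/2}$, which is \emph{not} controlled by the parabolic weight $\langle\tau+\xi^2\rangle$ alone once $s>\tfrac12$. For $-\tfrac12<s\le\tfrac12$ the required power of $\langle\tau\rangle$ is small enough to be absorbed, yielding the bound by $\|w\|_{X^{s,d_1}}$; for general $s$ one must use $\langle\tau\rangle\lesssim\langle\tau+\xi^2\rangle+\langle\xi\rangle^2$ and absorb the part of $\langle\tau\rangle$ not seen by the parabola into the extra weight defining the $W^{s,d_1}$ norm, which is exactly why that auxiliary space appears in the statement. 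I expect this case distinction in (b), together with the careful $L^1_\tau\to L^\infty_t$ trace bookkeeping it demands, to be the main obstacle; everything else is routine once the decomposition above is in place.
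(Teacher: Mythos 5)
Your proposal is correct and follows essentially the same route as the source: the paper gives no proof of Lemma \ref{duhamel}, deferring entirely to \cite{Cavalcante}, and the argument there is precisely your decomposition of the Duhamel multiplier $\frac{e^{it\tau}-e^{-it\xi^2}}{\tau+\xi^2}$ into low-modulation (Taylor-expanded) group terms, a high-modulation group term, and an inhomogeneous remainder, each reduced to Lemma \ref{grupo} or estimated directly via Cauchy--Schwarz in $\tau$ using $d_1>-\tfrac12$. You have also correctly identified the one genuinely delicate point, namely that in part (b) the weight $\langle\tau\rangle^{(2s+1)/4}$ is controlled by the parabolic modulation only for $-\tfrac12<s\le\tfrac12$, the excess for larger $s$ being absorbed by the $\langle\tau\rangle^{s}$ factor in the $W^{s,d_1}$ norm, which is exactly why that auxiliary space appears in the statement.
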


Similar results for KdV equation were obtained by Holmer in \cite{Holmerkdv}, which read as follows:

\begin{lemma}\label{duhamelk} For all $k\in\mathbb{R}$ we have the following estimates:\medskip
	\begin{enumerate}
		\item[(a)](\textbf{space traces}) Let $-\frac{1}{2}<d_2<0$, then
		\begin{equation*}
		\|\psi(t)\mathcal{K}w(x,t)\|_{\mathcal{C}\big(\mathbb{R}_t;\,H^k(\mathbb{R}_x)\big)}\leq c\|w\|_{Y^{k,d_2}}.
		\end{equation*}\medskip
		
		\item[(b)](\textbf{time traces})
		If $-\frac{1}{2}<d_2<0$, then
		\begin{equation*}
		\left\|\psi(t)\mathcal{K}w(x,t)\right\|_{\mathcal{C}\big(\mathbb{R}_x;\,H^{(k+1)/3}(\mathbb{R}_t)\big)}\lesssim \begin{cases}
		\|w\|_{Y^{k,d_2}} & \text{if}\; -1\leq k \leq \frac{1}{2},\\
	    \|w\|_{Y^{k,d_2}}+\|w\|_{U^{k,d_2}}& \text{for all}\;  k\in\mathbb{R}.
		\end{cases}
		\end{equation*}\medskip
		
		\item[(c)](\textbf{derivative time traces}) If $-\frac{1}{2}<d_2<0$, then
		\begin{equation*}
		\|\psi(t)\partial_x\mathcal{K}w(x,t)\|_{\mathcal{C}\big(\mathbb{R}_x;\,H_t^{k/3}(\mathbb{R})\big)}\lesssim
		\begin{cases}
		\|w\|_{Y^{k,d_2}}& \text{if}\;\;0\leq k \leq \frac{3}{2},\\
		\|w\|_{Y^{k,d_2}}+\|w\|_{U^{k,d_2}}& \text{for all}\; k\in\mathbb{R}.
		\end{cases}
		\end{equation*}
		
		\item[(d)](\textbf{Bourgain spaces estimates})
	    Let $0<b<\frac{1}{2}$ and $\alpha>1-b$, then
		\begin{equation*}
		\|\psi(t)\mathcal{K}w(x,t)\|_{Y^{k,b}\cap V^{\alpha}}\leq \|w\|_{Y^{k,-b}}.
		\end{equation*}
	\end{enumerate}
\end{lemma}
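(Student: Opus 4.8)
The plan is to follow Holmer's strategy from \cite{Holmerkdv}, reducing every estimate to the linear group bounds of Lemma \ref{grupok} through an explicit space--time Fourier representation of $\mathcal{K}$ together with a modulation decomposition. First I would insert the Fourier expansion of $w$ and compute the elementary time integral $\int_0^t e^{i(t-t')\xi^3}e^{it'\tau}dt'$, obtaining
\[
\mathcal{K}w(x,t)=c\int_{\R^2}e^{ix\xi}\,\frac{e^{it\tau}-e^{it\xi^3}}{i(\tau-\xi^3)}\,\widehat{w}(\xi,\tau)\,d\xi\,d\tau.
\]
The two exponentials in the numerator split $\mathcal{K}w$ into a genuinely inhomogeneous piece carrying $e^{it\tau}$ and a homogeneous piece carrying $e^{it\xi^3}$, the latter being (up to sign) $\psi(t)e^{-t\partial_x^3}$ applied to the function whose Fourier transform is $\widehat{w}(\xi,\tau)/\!\bigl(i(\tau-\xi^3)\bigr)$.

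Next I would cut the symbol with a smooth multiplier $\chi(\tau-\xi^3)$ into the high-modulation region $|\tau-\xi^3|\gtrsim1$ and the low-modulation region $|\tau-\xi^3|\lesssim1$. On the high-modulation region, a Cauchy--Schwarz in $\tau$ against the weight $|\tau-\xi^3|^{-2}\langle\tau-\xi^3\rangle^{-2d_2}$—which is integrable precisely because $d_2>-\tfrac12$—shows that the function defining the homogeneous piece has $H^k$-norm bounded by $\|w\|_{Y^{k,d_2}}$, so that piece is controlled by the corresponding part of Lemma \ref{grupok}; the inhomogeneous piece is handled by the same group bound after expanding $\psi(t)e^{it\tau}$ through its time Fourier transform. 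On the low-modulation region I would Taylor expand
\[
\frac{e^{it\tau}-e^{it\xi^3}}{i(\tau-\xi^3)}=e^{it\xi^3}\sum_{m\ge1}\frac{(it)^m(\tau-\xi^3)^{m-1}}{m!},
\]
which converges uniformly since $|\tau-\xi^3|\lesssim1$; each term is $\psi(t)t^m e^{-t\partial_x^3}$ acting on a function Fourier-supported in $|\tau-\xi^3|\lesssim1$, where $\langle\xi\rangle^{k}\sim\langle\tau\rangle^{k/3}$, so Lemma \ref{grupok} applies again and the series sums geometrically. Running this scheme with parts (a), (b), (c), (d) of Lemma \ref{grupok} respectively produces the four asserted estimates; for (d) one additionally invokes Lemma \ref{T} to absorb the $\psi(t)$ cutoff in $Y^{k,b}\cap V^{\alpha}$ and uses $0<b<\tfrac12$, $\alpha>1-b$ to keep the modulation weights summable.

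The main obstacle will be the dichotomy in $k$ appearing in (b) and (c), namely the need for the auxiliary norm $U^{k,d_2}$ outside the ranges $-1\le k\le\tfrac12$ and $0\le k\le\tfrac32$. The difficulty is that on the high-modulation region the resonance relation $\langle\xi\rangle^{k}\sim\langle\tau\rangle^{k/3}$ fails, and for large $k$ the required time-trace regularity $(k+1)/3$ (resp.\ $k/3$) cannot be recovered from the spatial weight $\langle\xi\rangle^{k}$ alone; one must instead measure $w$ with the temporal weight $\langle\tau\rangle^{k/3}$, which is exactly what the $U^{k,d_2}$ norm encodes. I would therefore, for $k$ outside the favorable range, bound the time trace of the homogeneous piece by trading $\langle\xi\rangle^{k}$ for $\langle\tau\rangle^{k/3}$ on the portion of frequency space where $\langle\tau\rangle\gtrsim\langle\xi\rangle^{3}$ and controlling the complementary portion by the group estimate, so that the two contributions combine to $\|w\|_{Y^{k,d_2}}+\|w\|_{U^{k,d_2}}$. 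The remaining work—verifying integrability of the modulation weights via Lemmas \ref{lemagtv1} and \ref{lemanovo-lema2} and summing the Taylor series—is routine and parallels the Schr\"odinger case treated in Lemma \ref{duhamel}.
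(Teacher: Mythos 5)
Your plan is correct and coincides with the argument the paper relies on: the paper gives no proof of Lemma \ref{duhamelk} but cites Holmer \cite{Holmerkdv}, whose proof is exactly the Fourier representation of $\mathcal{K}$ with the symbol $\frac{e^{it\tau}-e^{it\xi^3}}{i(\tau-\xi^3)}$, the high/low modulation splitting with Taylor expansion on the low-modulation piece, reduction to the group estimates of Lemma \ref{grupok}, and the introduction of the temporal weight $\langle\tau\rangle^{k/3}$ (the $U^{k,d_2}$ norm) precisely where the resonance relation $\langle\xi\rangle^{k}\sim\langle\tau\rangle^{k/3}$ fails. You correctly identify both the role of $d_2>-\tfrac12$ in the Cauchy--Schwarz step and the mechanism behind the dichotomy in $k$ in parts (b) and (c), so no substantive gap remains beyond the routine computations you defer.
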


\begin{remark}
	We note that the  time-adapted Bourgain spaces $W^{s,d_1}$ and $U^{k,d_2}$ used in lemmas \ref{duhamel}-(b) and  \ref{duhamelk}-(c), respectively, are introduced in order to cover the full values of regularity $s$ and $k$. 
\end{remark}

\section{\textbf{Nonlinear Estimates}}\label{nonlinearestimates}

Now we prove the main estimates for the nonlinear terms that are needed in the proof of our main results. 

\subsection{Known nonlinear estimates} We begin by recall previous nonlinear estimates obtained in the works \cite{Bourgain1} and \cite{Holmerkdv}.

We begin with the trilinear estimate for the nonlinear term of the classical cubic-NLS equation, deduced by Bourgain in \cite{Bourgain1}.

\begin{lemma}\label{trilinear}
	Let $u_1$, $u_2$, $u_3 \in X^{s,b}$ with $\frac{3}{8}<b<\frac{1}{2}$ and $s\geq 0$. Then, for all  $0<a<\frac{1}{2}$ we have that
	\begin{equation*}
	\|u_1u_2\overline{u}_3\|_{X^{s,-a}}\leq c\|u_1\|_{X^{s,b}}\|u_2\|_{X^{s,b}}\|u_3\|_{X^{s,b}}.
	\end{equation*}
\end{lemma}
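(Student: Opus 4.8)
The plan is to reduce to the base case $s=0$ and then run a duality argument anchored on Bourgain's $L^4$ Strichartz inequality. First I would note that since $\xi=\xi_1+\xi_2-\xi_3$ on the support of the convolution defining $\widehat{u_1u_2\overline{u_3}}$, one has $\langle\xi\rangle^s\lesssim\langle\xi_1\rangle^s+\langle\xi_2\rangle^s+\langle\xi_3\rangle^s$ for $s\ge0$. Distributing this weight onto whichever factor carries the top frequency, and using $\|\cdot\|_{X^{0,b}}\le\|\cdot\|_{X^{s,b}}$, reduces everything to the case $s=0$, namely
\[
\|u_1u_2\overline{u_3}\|_{X^{0,-a}}\le c\,\|u_1\|_{X^{0,b}}\|u_2\|_{X^{0,b}}\|u_3\|_{X^{0,b}}.
\]

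The main analytic input is the one-dimensional $L^4$ Strichartz estimate in Bourgain form, $\|u\|_{L^4_{x,t}(\R^2)}\lesssim\|u\|_{X^{0,3/8}}$, which follows from the free $L^4$ (or $L^6$) bound for $e^{it\partial_x^2}$ via the transference principle. For the sub-range $3/8\le a<\tfrac12$ this closes the estimate immediately: since $\|\cdot\|_{X^{0,-a}}$ is nonincreasing in $a$, duality of the Strichartz bound gives $\|F\|_{X^{0,-a}}\le\|F\|_{X^{0,-3/8}}\lesssim\|F\|_{L^{4/3}_{x,t}}$, and Hölder with $\tfrac34=\tfrac14+\tfrac14+\tfrac14$ yields
\[
\|u_1u_2\overline{u_3}\|_{X^{0,-a}}\lesssim\|u_1\|_{L^4}\|u_2\|_{L^4}\|u_3\|_{L^4}\lesssim\prod_{j=1}^3\|u_j\|_{X^{0,3/8}}\le\prod_{j=1}^3\|u_j\|_{X^{0,b}},
\]
the last step using $b>3/8$.

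The substantive case is $0<a<3/8$, where the negative weight is too weak to pass to $L^{4/3}$ and one must exploit the dispersive relation. I would dualize, writing $\|u_1u_2\overline{u_3}\|_{X^{0,-a}}=\sup_{\|v\|_{X^{0,a}}\le1}|\langle u_1u_2\overline{u_3},v\rangle_{L^2_{x,t}}|$, and reduce to bounding the quadrilinear form $\int_{*}f_1f_2f_3\,h\,\langle\sigma_1\rangle^{-b}\langle\sigma_2\rangle^{-b}\langle\sigma_3\rangle^{-b}\langle\sigma\rangle^{-a}$, where $f_j=\langle\sigma_j\rangle^{b}|\widehat{u_j}|$ and $h=\langle\sigma\rangle^{a}|\widehat{v}|$ are nonnegative $L^2$ functions, $\sigma_j=\tau_j+\xi_j^2$, $\sigma=\tau+\xi^2$, and $\int_{*}$ denotes integration over $\xi=\xi_1+\xi_2-\xi_3$, $\tau=\tau_1+\tau_2-\tau_3$. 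The key structural fact is the resonance identity
\[
\sigma-\sigma_1-\sigma_2+\sigma_3=2(\xi-\xi_1)(\xi-\xi_2),
\]
which forces $\max\{\langle\sigma\rangle,\langle\sigma_1\rangle,\langle\sigma_2\rangle,\langle\sigma_3\rangle\}\gtrsim\langle(\xi-\xi_1)(\xi-\xi_2)\rangle$. After a dyadic decomposition of the four modulations, in every non-resonant configuration (largest modulation comparable to the frequency product) I would spend part of that large weight to produce a negative power of the top frequency, place the two lowest-modulation factors in $L^4$ via the Strichartz bound, pair the remaining two by Cauchy--Schwarz, and sum the resulting dyadic series; the strict inequalities $b>3/8$ and $a>0$ supply exactly the surplus needed for summability.

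The hard part will be the resonant region $(\xi-\xi_1)(\xi-\xi_2)=O(1)$, i.e. $\xi_2\approx\xi_3$ (or $\xi_1\approx\xi_3$), where all four modulations may simultaneously be small and the identity yields no gain. There I expect to need the bilinear Strichartz refinement: two Schr\"odinger waves with transversal spatial frequencies satisfy $\|u_iu_j\|_{L^2_{x,t}}$ with a $\tfrac12$-derivative smoothing, which is what compensates for the loss $b<\tfrac12$. Grouping the genuinely resonant pair against the transversal factor and distributing the small positive weight $\langle\sigma\rangle^{-a}$ over the output should close the bound, once more using $a>0$ strictly. Assembling the non-resonant and resonant contributions and summing over the finitely many choices of which factor carries the $\langle\xi\rangle^s$ weight then completes the proof.
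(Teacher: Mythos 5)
You should first note that the paper does not actually prove this lemma: it is cited from \cite{Bourgain1}, where only the $b>\tfrac12$ version appears; the $b<\tfrac12$ variant used here (with $a,b\in(\tfrac38,\tfrac12)$) is the one established in Holmer's half-line NLS paper via the $L^4$ Strichartz estimate. Your treatment of the range $\tfrac38\le a<\tfrac12$ is exactly that standard argument and is complete: the reduction to $s=0$, the embedding $\|u\|_{L^4_{x,t}}\lesssim\|u\|_{X^{0,3/8}}$, its dual $\|F\|_{X^{0,-3/8}}\lesssim\|F\|_{L^{4/3}_{x,t}}$, the monotonicity $\|F\|_{X^{0,-a}}\le\|F\|_{X^{0,-3/8}}$ for $a\ge\tfrac38$, and H\"older. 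This already covers every use of the lemma in the paper, since the proofs of the main theorems must choose $a>\tfrac38$ anyway (Proposition \ref{acoplamento2} requires $a>\tfrac38$ and Proposition \ref{acoplamento1} requires $a\ge 2b-\tfrac12>\tfrac{7}{18}$).

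The gap is the range $0<a<\tfrac38$, which you only sketch and explicitly leave open (``I expect to need the bilinear Strichartz refinement\dots should close the bound''). That case cannot be closed, because the inequality is false whenever $3b+a<\tfrac32$, hence in particular for every fixed $b<\tfrac12$ once $a$ is small enough. Indeed, take $\widehat{u_j}=\chi_{A_N}$ with $A_N=\{(\xi,\tau):|\xi|\le N,\ |\tau+\xi^2|\le N^2\}$, so that $\|u_j\|_{X^{0,b}}\sim N^{2b+3/2}$ and the right-hand side is $\sim N^{6b+9/2}$. Writing $F=u_1u_2\overline{u_3}$, one has $\|\widehat{F}\|_{L^1}=\prod_j\|\widehat{u_j}\|_{L^1}\sim N^{9}$, while $\operatorname{supp}\widehat{F}$ has measure $\lesssim N^3$ and satisfies $|\tau+\xi^2|\lesssim N^2$ throughout; Cauchy--Schwarz then gives $\|F\|_{X^{0,-a}}^2\ge N^{-4a}\|\widehat{F}\|_{L^2}^2\ge N^{-4a}\|\widehat{F}\|_{L^1}^2/|\operatorname{supp}\widehat{F}|\gtrsim N^{15-4a}$. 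The ratio of the two sides is therefore $\gtrsim N^{3-6b-2a}\to\infty$ when $3b+a<\tfrac32$ (e.g.\ $b=0.4$, $a=0.1$); at $a=b=\tfrac38$ the exponent vanishes, consistent with the sharpness of the $L^4$ method. So your instinct that small $a$ is ``the substantive case'' was right, but the correct conclusion is that it must be excluded: the lemma should be stated for $\tfrac38\le a<\tfrac12$ (or under $3b+a\ge\tfrac32$), which is all the paper ever needs.
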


Next nonlinear estimates, in the context of the KdV equation, for $b<\frac{1}{2}$, was derived by Holmer in \cite{Holmerkdv}.
	\begin{lemma}\label{bilinear1}
		Let $v_1,\ v_2\in Y^{k,b}\cap V^{\alpha}$, with $k>-\frac{3}{4}$, $\alpha>\frac{1}{2}$ and $\max\big\{\frac{5}{12}-\frac{s}{9}, \frac{1}{4}-\frac{s}{3},\frac{3}{10}-\frac{s}{15},\frac{1}{4}\big\}<b<\frac{1}{2}$. Then we have
		\begin{equation}
		\big\|(v_1v_2)_x\big\|_{Y^{k,-b}}\leq c\|v_1\|_{Y^{k,b}\cap V^{\alpha}}\|v_2\|_{Y^{k,b}\cap V^{\alpha}}.
		\end{equation}
	\end{lemma}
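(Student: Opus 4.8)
The plan is to prove the estimate by duality, reducing it to a trilinear form over the convolution hyperplane, and then to exploit the KdV resonance identity together with the auxiliary $V^{\alpha}$ norm to treat the resonant region. Since the dual of $Y^{k,-b}$ under the $L^2$ space-time pairing is $Y^{-k,b}$, it suffices to bound
\begin{equation*}
\left| \int\!\!\int \partial_x(v_1 v_2)\,\overline{w}\,dx\,dt \right|\lesssim \|v_1\|_{Y^{k,b}\cap V^{\alpha}}\,\|v_2\|_{Y^{k,b}\cap V^{\alpha}}\,\|w\|_{Y^{-k,b}}
\end{equation*}
for arbitrary $w$. Passing to space-time Fourier variables and setting $f_j=\langle\xi\rangle^k\langle\tau-\xi^3\rangle^b|\widehat{v_j}|$ and $g=\langle\xi\rangle^{-k}\langle\tau-\xi^3\rangle^b|\widehat{w}|$, the form is dominated by an integral over $\{\xi=\xi_1+\xi_2,\ \tau=\tau_1+\tau_2\}$, abbreviated $\int_{*}$, of
\begin{equation*}
\frac{|\xi|\,\langle\xi\rangle^{-k}\langle\xi_1\rangle^{-k}\langle\xi_2\rangle^{-k}}{\langle\sigma\rangle^{b}\langle\sigma_1\rangle^{b}\langle\sigma_2\rangle^{b}}\;g\,f_1\,f_2,
\qquad \sigma=\tau-\xi^3,\ \ \sigma_j=\tau_j-\xi_j^3.
\end{equation*}
The task is to bound this multiplier so that Cauchy--Schwarz in $L^2$ applies to $f_1,f_2,g$.

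The central algebraic tool is the resonance relation: on the hyperplane one has $\sigma-\sigma_1-\sigma_2=-3\xi\xi_1\xi_2$, whence $\max\{\langle\sigma\rangle,\langle\sigma_1\rangle,\langle\sigma_2\rangle\}\gtrsim\langle\xi\xi_1\xi_2\rangle$. I would split the domain according to which of the three modulations is largest and according to the dyadic sizes of $|\xi|,|\xi_1|,|\xi_2|$, separating high-high interactions with a low output frequency, high-low interactions with a high output, and the fully comparable case. In each nonresonant subregion one of the factors $\langle\sigma\rangle^b,\langle\sigma_1\rangle^b,\langle\sigma_2\rangle^b$ is $\gtrsim\langle\xi\xi_1\xi_2\rangle^b$; a portion of this gain absorbs the derivative factor $|\xi|$ and the frequency weights, after which Cauchy--Schwarz in the remaining variables together with the one-dimensional integral bounds of Lemma~\ref{lemagtv1} and Lemma~\ref{lema1-Holmerkdv} closes the estimate. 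The threshold $k>-\tfrac34$ emerges precisely from the high-high interaction producing a low output, where the gain $\langle\xi\xi_1\xi_2\rangle^b$ must overcome the loss $\langle\xi\rangle^{-k}|\xi|$.

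The main obstacle is the resonant region, where all three modulations are comparably small and the interaction sits near the characteristic surface $\tau=\xi^3$. There $\max\{\langle\sigma\rangle,\langle\sigma_1\rangle,\langle\sigma_2\rangle\}$ may be $O(1)$ while $|\xi\xi_1\xi_2|$ is also $O(1)$, so the modulation weights provide no gain; moreover, because $b<\tfrac12$, the classical $X^{k,b}$ argument (which uses $b>\tfrac12$ to render $\langle\tau-\xi^3\rangle^{-b}$ integrable in $\tau$) breaks down. This is exactly the situation the norm $V^{\alpha}$ with $\alpha>\tfrac12$ is designed to repair: in this region I would estimate $v_1$ or $v_2$ through $\|v_j\|_{V^{\alpha}}=\|\langle\tau\rangle^{\alpha}\widehat{v_j}\|_{L^2}$, using the surplus power $\langle\tau\rangle^{\alpha}$ to recover the $\tau$-integrability that the deficient modulation weight can no longer supply, and then performing the remaining frequency integration by Cauchy--Schwarz.

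Assembling the nonresonant and resonant contributions and summing the dyadic pieces yields the claim; the dyadic sums converge geometrically thanks to the strict inequalities $\alpha>\tfrac12$, $b<\tfrac12$ and $k>-\tfrac34$, together with the lower bounds on $b$ in the hypothesis, which guarantee that each elementary integral invoked through Lemmas~\ref{lemagtv1} and \ref{lema1-Holmerkdv} converges. I expect the bookkeeping in the resonant case, and in particular the correct allocation of the $V^{\alpha}$ norm to only one factor while keeping the other in $Y^{k,b}$, to be the most delicate point.
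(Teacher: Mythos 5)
First, note that the paper does not prove this lemma at all: it is quoted verbatim from Holmer \cite{Holmerkdv} (his bilinear estimate with the low-frequency correction $V^{\alpha}$), so there is no in-paper argument to compare against beyond the citation. Your outline is the right one in skeleton and is essentially what Holmer does: dualize, pass to the convolution hyperplane, use the resonance identity $\sigma-\sigma_1-\sigma_2=-3\xi\xi_1\xi_2$ (which you state correctly), split according to the dominant modulation and the relative sizes of the frequencies, and invoke the calculus lemmas (Lemmas \ref{lemagtv1} and \ref{lema1-Holmerkdv}) to close the $L^2$ estimates. Two small slips: your multiplier should carry $\langle\xi\rangle^{+k}$, not $\langle\xi\rangle^{-k}$, given your normalization $g=\langle\xi\rangle^{-k}\langle\sigma\rangle^{b}|\widehat{w}|$; and the statement's exponents involve $k$, not $s$ (a typo inherited from the paper).

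The genuine gap is in the two places where the lemma actually has content. First, your localization of where $V^{\alpha}$ is needed is off. The obstruction created by $b<\tfrac12$ is not the ``fully resonant'' region where all three modulations are $O(1)$; it is the \emph{low input-frequency} region $|\xi_j|\lesssim 1$ (with the other two frequencies possibly large), where $\langle\tau_j-\xi_j^3\rangle\sim\langle\tau_j\rangle$ and the $\tau_j$-integral $\int\langle\tau_j\rangle^{-2b}\,d\tau_j$ diverges precisely because $b<\tfrac12$; this is why Holmer, and the paper's own remark following the lemma, call $V^{\alpha}$ a ``low frequency correction factor,'' and why $\alpha>\tfrac12$ restores integrability there. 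Your description gestures at this but conflates it with near-characteristic resonance, and without isolating the correct region one cannot run the substitution $\langle\sigma_j\rangle^{-b}\mapsto\langle\tau_j\rangle^{-\alpha}$ legitimately. Second, you never derive (or even test) the explicit thresholds $\max\{\tfrac{5}{12}-\tfrac{k}{9},\tfrac14-\tfrac{k}{3},\tfrac{3}{10}-\tfrac{k}{15},\tfrac14\}<b$, which are the quantitative heart of the statement; ``the lower bounds on $b$ guarantee convergence'' is a placeholder for the entire case-by-case computation in which each of the four expressions arises from a specific frequency/modulation configuration. As a proposal it identifies the correct strategy, but it would need the low-frequency analysis made precise and the exponent bookkeeping carried out before it constitutes a proof.
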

	
		\begin{remark}
			The purpose of introducing the space $V^{\alpha}$ (low frequency correction factor) was done by Holmer in \cite{Holmerkdv} to validate the bilinear estimates above for $b<\frac{1}{2}$. Recall that the need to take $b<\frac{1}{2}$ appears in Lemma \ref{cof}-(d).
		\end{remark}

\subsection{Bilinear estimates for the coupling terms}
We finish this section deriving new bilinear estimates for the interaction terms of the system.
\begin{proposition}\label{acoplamento1}
	Let $s,k,a,b\in \mathbb{R}$ with $k-|s|> \max\{2-6b,\frac{5}{2}-9a\}$ and $\frac{7}{18}<2b-\tfrac12 \le a<b$. Then there exists a positive constant $c=c(s,k,a,b)$ such that 
	\begin{equation*}
	\|uv\|_{X^{s,-a}}\leq c \|u\|_{X^{s,b}}\|v\|_{Y^{k,b}}
	\end{equation*}
	 for any $u\in X^{s,b}$ and $v\in Y^{k,b}$.
\end{proposition}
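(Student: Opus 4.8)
The plan is to argue by duality and reduce the estimate to a single weighted convolution bound. Writing $f(\xi,\tau)=\langle\xi\rangle^{s}\langle\tau+\xi^{2}\rangle^{b}\widehat{u}(\xi,\tau)$ and $g(\xi,\tau)=\langle\xi\rangle^{k}\langle\tau-\xi^{3}\rangle^{b}\widehat{v}(\xi,\tau)$, so that $\|f\|_{L^{2}}=\|u\|_{X^{s,b}}$ and $\|g\|_{L^{2}}=\|v\|_{Y^{k,b}}$, the norm $\|uv\|_{X^{s,-a}}$ is computed by pairing $\langle\xi\rangle^{s}\langle\tau+\xi^{2}\rangle^{-a}\widehat{uv}$ with an arbitrary $h\in L^{2}$ of unit norm. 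Setting $\sigma=\tau+\xi^{2}$, $\sigma_{1}=\tau_{1}+\xi_{1}^{2}$, $\sigma_{2}=\tau_{2}-\xi_{2}^{3}$ and using $\xi=\xi_{1}+\xi_{2}$, $\tau=\tau_{1}+\tau_{2}$ on the support of the convolution, the first step is to record the resonance identity
\begin{equation*}
\sigma-\sigma_{1}-\sigma_{2}=\xi_{2}^{3}+\xi_{2}^{2}+2\xi_{1}\xi_{2}=:R,
\end{equation*}
which shows $\max\{\langle\sigma\rangle,\langle\sigma_{1}\rangle,\langle\sigma_{2}\rangle\}\gtrsim\langle R\rangle$; this algebraic gain is the only source of smoothing and will be used to absorb the frequency weights. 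Note that the hypotheses $2b-\tfrac12\le a<b$ force $\tfrac14<b<\tfrac12$, which is exactly the range in which the calculus lemmas below apply.

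Next I would apply the Cauchy--Schwarz inequality in $(\xi_{1},\tau_{1})$ after extracting $h$, which reduces the whole estimate to the uniform pointwise bound
\begin{equation*}
\Theta(\xi,\tau):=\frac{\langle\xi\rangle^{2s}}{\langle\sigma\rangle^{2a}}\int\!\!\int\frac{d\xi_{1}\,d\tau_{1}}{\langle\xi_{1}\rangle^{2s}\langle\sigma_{1}\rangle^{2b}\langle\xi_{2}\rangle^{2k}\langle\sigma_{2}\rangle^{2b}}\leq c.
\end{equation*}
The $\tau_{1}$--integration comes first: since $\sigma_{1}$ and $\sigma_{2}$ are affine in $\tau_{1}$ with opposite slopes and $\tfrac14<b<\tfrac12$, Lemma \ref{lemagtv1} with $b_{1}=b_{2}=b$ yields the factor $\langle\sigma-R\rangle^{-(4b-1)}$ (after the substitution $\alpha-\beta=\sigma-R$). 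Thus everything is reduced to
\begin{equation*}
\Theta(\xi,\tau)\lesssim\frac{\langle\xi\rangle^{2s}}{\langle\sigma\rangle^{2a}}\int\frac{d\xi_{1}}{\langle\xi_{1}\rangle^{2s}\langle\xi_{2}\rangle^{2k}\langle\sigma-R(\xi_{1})\rangle^{4b-1}},
\end{equation*}
where, as a function of $\xi_{1}$, the phase $\sigma-R(\xi_{1})$ is a cubic with unit leading coefficient and $\xi_{2}=\xi-\xi_{1}$.

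The core of the argument is then a case analysis of this $\xi_{1}$--integral according to the relative sizes of $|\xi_{1}|$, $|\xi_{2}|$ and of $\sigma$ against $R$. When all frequencies are bounded the estimate is immediate. In the interaction region $|\xi_{1}|\lesssim|\xi_{2}|$ one has $|\xi|\sim|\xi_{2}|$ and $R\sim\xi_{2}^{3}$, so $\langle\xi\rangle^{2s}\langle\xi_{2}\rangle^{-2k}\lesssim\langle\xi_{2}\rangle^{2|s|-2k}$; here the cubic change of variables governed by $\langle\sigma-R\rangle^{-(4b-1)}$ (Lemma \ref{lemanovo-lema2}, estimate \eqref{lema2}, which needs $4b-1>\tfrac13$) together with the decay of $\langle\sigma\rangle^{-2a}$ in the resonant subregion $\sigma\approx R\sim\xi_{2}^{3}$ forces the threshold $k-|s|>\tfrac52-9a$. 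In the complementary region $|\xi_{2}|\lesssim|\xi_{1}|$ one has $|\xi|\sim|\xi_{1}|$, $R\approx2\xi_{1}\xi_{2}$, and after a change of variables whose Jacobian comes from the Schr\"odinger paraboloid one meets an integral of the type $\int_{|x|<\beta}\langle x\rangle^{-(4b-1)}|\alpha-x|^{-1/2}\,dx$; invoking Lemma \ref{lema1-Holmerkdv} (tailored precisely to the exponent $4b-1$ with $b<\tfrac12$) produces the complementary threshold $k-|s|>2-6b$. The auxiliary constraint $2b-\tfrac12\le a$, equivalently $4b-1\le 2a$, guarantees that the smoothing $\langle\sigma-R\rangle^{-(4b-1)}$ gained from the time integration never exceeds what the output weight $\langle\sigma\rangle^{-2a}$ can absorb in the diagonal region $\sigma\approx R$, so the two exponents remain compatible throughout.

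I expect the main obstacle to be exactly this spatial case analysis in the high-frequency interaction regimes, and in particular the borderline region handled by Lemma \ref{lema1-Holmerkdv}: because $b<\tfrac12$ forces $4b-1<1$, a naive cubic change of variables does not produce an integrable tail, and the negative powers of the high frequency must be extracted carefully using the square-root Jacobian coming from $\sigma=\tau+\xi^{2}$. Handling both signs of $s$ (hence the appearance of $|s|$ in the hypotheses) requires splitting $|\xi_{1}|\lesssim|\xi_{2}|$ from $|\xi_{1}|\gtrsim|\xi_{2}|$ so that $\langle\xi\rangle^{2s}$ can always be dominated by the larger of $\langle\xi_{1}\rangle,\langle\xi_{2}\rangle$; the two resulting worst cases are what produce the two thresholds combined in $k-|s|>\max\{2-6b,\tfrac52-9a\}$.
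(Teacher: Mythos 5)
Your overall strategy --- duality, the resonance identity $\sigma-\sigma_1-\sigma_2=R$, Cauchy--Schwarz, and reduction to a weighted $\xi_1$-integral --- has the right general shape, but it deviates from the paper in one structural respect: you apply Cauchy--Schwarz only once, in $(\xi_1,\tau_1)$, and reduce everything to the single bound $\Theta(\xi,\tau)\le c$ on all of $\mathbb{R}^4$. The paper instead splits $\mathbb{R}^4$ according to which of the three modulations dominates and proves the three dual bounds \eqref{w1}, \eqref{w2}, \eqref{w3}, each only on its own piece; the threshold $\tfrac52-9a$ in the hypothesis comes specifically from the dual estimate \eqref{w2} in the region where the KdV input modulation is largest. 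Whether your single bound really holds globally (and not merely on the region where the output modulation dominates) is precisely the point your sketch would have to verify, and it does not address it.

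The concrete gap is in the case analysis of the $\xi_1$-integral, which you rightly call the core of the argument. In your convention ($\xi_1$ the Schr\"odinger input frequency, $\xi_2$ the KdV frequency, $R=\xi_2(\xi_2^2+\xi_2+2\xi_1)$) your two cases are $|\xi_1|\lesssim|\xi_2|$ with $R\sim\xi_2^3$, and $|\xi_2|\lesssim|\xi_1|$ with $R\approx2\xi_1\xi_2$. Neither covers the degenerate regime $\xi_2^2+\xi_2+2\xi_1\approx0$, i.e.\ $\xi_1\approx-\xi_2^2/2$ with $|\xi_2|$ large: there $|\xi_2|\sim|\xi_1|^{1/2}$, so one is in your second case, yet $\xi_2^2\sim2|\xi_1|$ and the approximation $R\approx2\xi_1\xi_2$ fails --- $R$ is anomalously small. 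This is exactly the paper's region $\mathcal{R}_{1_2}$ (called there ``the most complicated case''); it is the configuration that produces the binding threshold $k-|s|>2-6b$, and it is the only place in the paper's proof where the hypothesis $4b-1\le2a$ enters (via $\langle\sigma\rangle\langle\sigma-R\rangle\ge\langle R\rangle$, then $\langle\sigma\rangle^{4b-1-2a}\le1$, followed by the change of variables $\eta=R$ whose Jacobian is bounded below by half the square of the KdV frequency on that region). Your attribution of thresholds is also scrambled: the subcase $\sigma\approx R\sim\xi_2^3$ yields the much weaker requirement $k-|s|\ge-3a$ (paper's $\mathcal{R}_{1_3}$), not $\tfrac52-9a$. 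Finally, Lemma \ref{lema1-Holmerkdv} is not the right tool here: the factor $|\alpha-x|^{-1/2}$ there originates from solving the KdV--KdV resonance $3\xi\xi_1\xi_2$ for an internal frequency by the quadratic formula, a structure that the Schr\"odinger--KdV resonance does not have, and the paper's proof of Proposition \ref{acoplamento1} never invokes that lemma. Without a correct treatment of the degenerate region, the threshold $2-6b$ --- which under the stated hypotheses is the larger of the two --- is not established.
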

\begin{proposition}\label{estimativaw}
	Let $s,k,a,b\in \mathbb{R}$, with $\frac{1}{2} < s\leq 2a$, $\frac{1}{3}<a<b<\frac{1}{2}$ and $k>s-2a$. Then, there exists a positive constant $c=c(s,k,a,b)$ such that
	\begin{equation*}
	\|uv\|_{W^{s,-a}}\leq c\|u\|_{X^{s,b}}\|v\|_{Y^{k,b}}
	\end{equation*}
	for any $u\in X^{s,b}$ and  $v\in Y^{k,b}$.
\end{proposition}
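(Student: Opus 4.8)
The plan is to first strip the purely temporal weight out of the $W^{s,-a}$--norm, reducing the statement to a Schr\"odinger--type Bourgain bilinear estimate, and then to close that estimate by duality together with a resonance/case analysis. Writing $\sigma=\tau+\xi^2$ and using $\langle\tau\rangle\lesssim\langle\sigma\rangle\langle\xi\rangle^2$, I would bound the weight of the $W^{s,-a}$--norm by $\langle\tau\rangle^{s/2}\langle\sigma\rangle^{-a}\lesssim\langle\xi\rangle^{s}\langle\sigma\rangle^{-\beta}$ with $\beta:=a-\tfrac{s}{2}$. The hypothesis $s\le 2a$ gives $\beta\ge 0$, while $s>\tfrac12$ and $a<\tfrac12$ give $\beta<\tfrac14$, so $\beta\in[0,\tfrac14)$. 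Hence it suffices to prove the Schr\"odinger estimate $\|uv\|_{X^{s,-\beta}}\le c\,\|u\|_{X^{s,b}}\|v\|_{Y^{k,b}}$. I stress that one cannot simply invoke Proposition~\ref{acoplamento1} with $a$ replaced by $\beta$: its hypothesis $2b-\tfrac12\le a$ would read $2b-\tfrac12\le\beta=a-\tfrac{s}{2}<b-\tfrac14$, forcing $b<\tfrac14$, which contradicts $b>\tfrac13$. A direct argument is therefore required.

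By Plancherel and duality, setting $f(\xi_1,\tau_1)=\langle\xi_1\rangle^{s}\langle\sigma_1\rangle^{b}|\widehat u(\xi_1,\tau_1)|$ and $h(\xi_2,\tau_2)=\langle\xi_2\rangle^{k}\langle\sigma_2\rangle^{b}|\widehat v(\xi_2,\tau_2)|$ (with $\sigma_1=\tau_1+\xi_1^2$, $\sigma_2=\tau_2-\xi_2^3$, so that $\|f\|_{L^2}=\|u\|_{X^{s,b}}$, $\|h\|_{L^2}=\|v\|_{Y^{k,b}}$) and a generic $g\in L^2$, the estimate reduces to bounding
\[
J=\iint \frac{\langle\xi\rangle^{s}\,\langle\sigma\rangle^{-\beta}}{\langle\xi_1\rangle^{s}\langle\xi_2\rangle^{k}\,\langle\sigma_1\rangle^{b}\langle\sigma_2\rangle^{b}}\;f(\xi_1,\tau_1)\,h(\xi_2,\tau_2)\,g(\xi,\tau)
\]
over $\{\xi=\xi_1+\xi_2,\ \tau=\tau_1+\tau_2\}$ by $c\|f\|_{L^2}\|h\|_{L^2}\|g\|_{L^2}$. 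The algebraic heart is the resonance identity $\sigma-\sigma_1-\sigma_2=\xi_2\bigl(2\xi_1+\xi_2+\xi_2^2\bigr)$, which forces $\max\{\langle\sigma\rangle,\langle\sigma_1\rangle,\langle\sigma_2\rangle\}\gtrsim\langle \xi_2(2\xi_1+\xi_2+\xi_2^2)\rangle$. I would also distribute the output weight through $\langle\xi\rangle\lesssim\langle\xi_1\rangle\langle\xi_2\rangle$, so that $\langle\xi\rangle^{s}\langle\xi_1\rangle^{-s}\lesssim\langle\xi_2\rangle^{s}$ and the frequency factor collapses to $\langle\xi_2\rangle^{\,s-k}$, whose worst case is $\xi_1$ small, $\xi_2$ large.

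Next I would split the domain according to which of $\langle\sigma\rangle,\langle\sigma_1\rangle,\langle\sigma_2\rangle$ is largest. In the region where $\langle\sigma\rangle$ dominates I fix $(\xi,\tau)$, apply Cauchy--Schwarz, integrate in $\tau_1$ by Lemma~\ref{lemagtv1} (licit since $2b>\tfrac12$, i.e. $b>\tfrac14$) to produce a factor $\langle\,\cdot\,\rangle^{-(4b-1)}$ whose argument is the monic cubic $\sigma+\xi_1^2-\xi^2-(\xi-\xi_1)^3$ in $\xi_1$, and then integrate in $\xi_1$ by the cubic bound \eqref{lema2} of Lemma~\ref{lemanovo-lema2}, available exactly because $4b-1>\tfrac13\Leftrightarrow b>\tfrac13$. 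In the two regions where an input modulation dominates I peel off a small power of that modulation, move the remaining weight onto the corresponding $L^2$ factor, re--apply Cauchy--Schwarz fixing $(\xi_1,\tau_1)$ (resp. $(\xi_2,\tau_2)$), and again reduce the inner integrations to Lemmas~\ref{lemagtv1}, \ref{lemanovo-lema2} and \ref{lema1-Holmerkdv}. In each case one is left with a single frequency integral of the schematic form $\int\langle\xi_2\rangle^{2(s-k)}\,(\text{modulation gain})\,d\xi_2$ to be shown finite uniformly in the frozen variables.

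I expect the decisive difficulty to be the high KdV--frequency regime $|\xi_2|\gg1$ with $k$ small (possibly negative), where the frequency factor $\langle\xi_2\rangle^{2(s-k)}$ grows; there the resonance relation gives $\sigma-\sigma_1-\sigma_2\sim\xi_2^3$, so the dominant modulation is $\gtrsim\langle\xi_2\rangle^{3}$ and the gain from $\langle\sigma\rangle^{-\beta}$ (or from the peeled input modulation) must absorb $\langle\xi_2\rangle^{\,s-k}$. Tracking these exponents shows convergence precisely under $k>s-2a$ (equivalently $k>-2\beta$) together with $b>\tfrac13$; the borderline bookkeeping of how the three weights $\langle\sigma\rangle^{-\beta}\langle\sigma_1\rangle^{-b}\langle\sigma_2\rangle^{-b}$ are allotted among the competing decays is where the argument is most delicate. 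The low--output--frequency piece $\langle\xi\rangle\lesssim1$ and the near--resonant set $2\xi_1+\xi_2+\xi_2^2\approx0$ are lower order and can be disposed of separately.
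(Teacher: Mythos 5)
Your opening reduction is where the argument breaks. The pointwise bound $\langle\tau\rangle^{s/2}\langle\sigma\rangle^{-a}\lesssim\langle\xi\rangle^{s}\langle\sigma\rangle^{-\beta}$ with $\beta=a-\tfrac{s}{2}$ is a correct inequality, but it weakens the target in two directions at once: it reinstates the full spatial weight $\langle\xi\rangle^{s}$ \emph{and} it cuts the output modulation decay down to $\langle\sigma\rangle^{-\beta}$, with $\beta$ as small as $0$ when $s=2a$. The paper instead splits the \emph{output} region. On $\{|\tau|\le 10|\xi|^2\}$ one has $\langle\tau\rangle^{s/2}\lesssim\langle\xi\rangle^{s}$ while the full decay $\langle\sigma\rangle^{-a}$ is retained, so that piece is exactly Proposition \ref{acoplamento1}. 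On $\{|\tau|>10|\xi|^2\}$ one has $\langle\tau\rangle\sim\langle\sigma\rangle$, so the weight becomes $\langle\sigma\rangle^{-(a-s/2)}$ with \emph{no} spatial weight at all, the kernel carrying only the negative powers $\langle\xi_1\rangle^{-2k}\langle\xi-\xi_1\rangle^{-2s}$; the hypothesis $k>s-2a$ then enters through the trade $\langle\xi_{\mathrm{KdV}}\rangle^{-2k}\lesssim\langle\tau\rangle^{-k}$, available precisely because $|\tau|\gtrsim|\xi_{\mathrm{KdV}}|^{2}$ there. Your global reduction forfeits both mechanisms simultaneously.

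Concretely, the reduced estimate $\|uv\|_{X^{s,-\beta}}\lesssim\|u\|_{X^{s,b}}\|v\|_{Y^{k,b}}$ cannot be closed by the Cauchy--Schwarz and case analysis you sketch. Test it in your own ``decisive'' regime: Schr\"odinger input frequency $\xi_1=O(1)$, KdV input frequency $\xi_2=N\gg1$, output modulation dominant, nondegenerate resonance $|2\xi_1+\xi_2+\xi_2^2|\sim N^{2}$, hence $\langle\sigma\rangle\gtrsim N^{3}$. After distributing $\langle\xi\rangle^{2s}\lesssim\langle\xi_1\rangle^{2s}\langle\xi_2\rangle^{2s}$ and integrating in $\tau_1$ via Lemma \ref{lemagtv1}, the supremum you must control contains the factor $\langle\xi_2\rangle^{2(s-k)}\langle\sigma\rangle^{-2\beta}\lesssim\langle\xi_2\rangle^{2(s-k)-6\beta}$, while the residual $\xi_1$-integral of $\langle\mathrm{cubic}\rangle^{-(4b-1)}$ contributes only $O(1)$ by Lemma \ref{lemanovo-lema2}. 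Boundedness therefore forces $k\ge s-3\beta=\tfrac{5s}{2}-3a$, which exceeds the hypothesis $k>s-2a$ by $\tfrac{3s}{2}-a>\tfrac14$ throughout the admissible range $s>\tfrac12$, $a<\tfrac12$; in the paper's formulation the same configuration costs only $k\ge s-2a$, the difference being exactly the $\langle\xi\rangle^{s}\sim\langle\xi_2\rangle^{s}$ your reduction reinstated. So the assertion that ``tracking these exponents shows convergence precisely under $k>s-2a$'' does not hold for your setup, and the proof does not go through; the remedy is not finer bookkeeping but a different first step, namely keeping $\langle\tau\rangle^{s/2}$ intact and splitting on $|\tau|\lessgtr 10|\xi|^{2}$ as above.
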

\begin{remark}
In the above estimate the hypothesis $s\leq 2a$ is sufficient to cover the set $s<1$. Moreover, is sufficient  $s>1/2$ 
since for the case $s\leq 1/2$ the implementation of the space $W^{s,b}$ is not needed in the estimate (b) of Lemma \ref{duhamel}.
\end{remark}

\begin{proposition}\label{acoplamento2}
	Let $s,k,a,b\in \mathbb{R}$ with $s\geq 0$, $$k\leq\min\left\{s+6b+3a-\frac{7}{2},\ s+3b-1,\ 4s+2a-\frac{3}{2},\ 4s+3a+6b-\frac{7}{2}\right\}$$ and  $\frac38 <a\leq b<\frac{1}{2}$. Then there exists a positive constant $c=c(a,b,s,k)$ such that 
		\begin{equation*}
	\|(u_1\bar{u}_2)_x\|_{Y^{k,-a}}\leq c\|u_1\|_{X^{s,b}}\|u_2\|_{X^{s,b}}
	\end{equation*}
	for any $u_1,u_2\in X^{s,b}$.
\end{proposition}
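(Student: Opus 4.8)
The plan is to prove this by duality together with the Fourier restriction norm method, in the spirit of the bilinear estimates of \cite{BOP} and \cite{CL}, but adapted to the regime $a,b<\tfrac12$. Set $f_1(\xi_1,\tau_1)=\langle\xi_1\rangle^s\langle\tau_1+\xi_1^2\rangle^b|\widehat{u_1}(\xi_1,\tau_1)|$ and $f_2(\xi_2,\tau_2)=\langle\xi_2\rangle^s\langle\tau_2-\xi_2^2\rangle^b|\widehat{\overline{u_2}}(\xi_2,\tau_2)|$, where the reflected modulation weight $\langle\tau_2-\xi_2^2\rangle^b$ accounts for the conjugation (so that after the change of variables $(\xi_2,\tau_2)\mapsto(-\xi_2,-\tau_2)$ one has $\|f_i\|_{L^2}=\|u_i\|_{X^{s,b}}$). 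Since $\widehat{\partial_x(u_1\bar u_2)}=i\xi\,(\widehat{u_1}\ast\widehat{\overline{u_2}})$, dualizing the $Y^{k,-a}$ norm against a nonnegative $g\in L^2_{\xi,\tau}$ with $\|g\|_{L^2}=1$ and taking absolute values throughout reduces the claim to the multiplier inequality
\[
\iint_{\xi=\xi_1+\xi_2,\,\tau=\tau_1+\tau_2}\frac{|\xi|\,\langle\xi\rangle^{k}\,f_1(\xi_1,\tau_1)\,f_2(\xi_2,\tau_2)\,g(\xi,\tau)}{\langle\tau-\xi^3\rangle^{a}\,\langle\xi_1\rangle^{s}\langle\tau_1+\xi_1^2\rangle^{b}\,\langle\xi_2\rangle^{s}\langle\tau_2-\xi_2^2\rangle^{b}}\lesssim \|f_1\|_{L^2}\|f_2\|_{L^2}.
\]
Because only moduli occur, the complex conjugation produces no further difficulty.

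The algebraic heart of the argument is the resonance identity
\[
(\tau-\xi^3)-(\tau_1+\xi_1^2)-(\tau_2-\xi_2^2)=-\xi^3-\xi_1^2+\xi_2^2=-\xi(\xi^2-\xi+2\xi_1),
\]
valid on the integration set $\xi=\xi_1+\xi_2$, $\tau=\tau_1+\tau_2$. Consequently
$\max\{\langle\tau-\xi^3\rangle,\langle\tau_1+\xi_1^2\rangle,\langle\tau_2-\xi_2^2\rangle\}\gtrsim\langle\xi(\xi^2-\xi+2\xi_1)\rangle$, so the largest of the three modulations controls a factor comparable to $|\xi|\,|\xi^2-\xi+2\xi_1|$. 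This is precisely the quantity we must spend to absorb the derivative factor $|\xi|$ and the output weight $\langle\xi\rangle^k$.

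I would then split the domain into three regions according to which modulation realizes the maximum. In each region I extract a fractional power (at most $a$ or $b$) of that large modulation, convert it via the resonance identity into powers of $|\xi|$ and $|\xi^2-\xi+2\xi_1|$, and then apply Cauchy--Schwarz, placing two of the three $L^2$ factors directly and estimating the remaining double integral by the supremum over the output frequencies of a one-dimensional integral in a modulation variable. These residual one-dimensional integrals are handled by Lemma \ref{lemagtv1} and the elementary bounds of Lemma \ref{lemanovo-lema2}, together with Lemma \ref{lema1-Holmerkdv} in the subregion where integrating in $\tau$ first leaves a square-root singularity in the spatial frequency.

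The main obstacle is exactly the constraint $a,b<\tfrac12$: none of the modulation weights is integrable in its own time-frequency variable, so unlike the $b>\tfrac12$ arguments of \cite{CL} and \cite{YW} one cannot simply integrate out two modulations through Lemma \ref{lemanovo-lema2}. Instead the resonance gain must be apportioned with care—enough of it onto $|\xi^2-\xi+2\xi_1|$ to make the spatial-frequency integral converge near its degeneracies (where $\xi^2-\xi+2\xi_1$ nearly vanishes, and in the resonant high--high-to-low regime $|\xi|=|\xi_1+\xi_2|\ll|\xi_1|\sim|\xi_2|$), and the remainder onto the derivative loss and onto $\langle\xi\rangle^k$. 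The four upper bounds on $k$ in the hypothesis are the balance conditions coming from the worst of these subcases: the resonant high--high-to-low interaction yields the constraints involving $4s$, while the high--low interactions yield those involving $s$ alone, with $a$ and $b$ entering through the fractional powers of the modulations that survive after the resonance has been exploited.
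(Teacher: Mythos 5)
Your strategy is the same as the paper's: the identical duality reduction, the identical resonance identity (your $(\tau-\xi^3)-(\tau_1+\xi_1^2)-(\tau_2-\xi_2^2)=-\xi(\xi^2-\xi+2\xi_1)$ is the $\xi=\xi_1+\xi_2$ form of the paper's relation \eqref{Wualgebrico}), the same reliance on Lemmas \ref{lemagtv1} and \ref{lemanovo-lema2} after Cauchy--Schwarz, and the same reading of the four upper bounds on $k$ as balance conditions coming from the high--low and high--high-to-low interactions. As an orientation it is correct.

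The gap is that the proof is never executed, and for a bilinear estimate of this kind the content is precisely the region-by-region exponent bookkeeping that you defer; moreover your stated mechanism fails in the one place where the argument is genuinely delicate. You propose to ``extract a fractional power of the largest modulation and convert it via the resonance identity into powers of $|\xi|$ and $|\xi^2-\xi+2\xi_1|$,'' but in the comparable-frequency region $|\xi_1|\sim|\xi_2|$ near the zero set of $\xi^2-\xi+2\xi_1$ the resonance function is $O(1)$ and there is \emph{no} gain to apportion. The paper's proof handles this by a further subdivision of $\mathcal{R}_4$ according to whether $|\xi_1|\geq 2|\xi^2-\xi+2\xi_1|$, and in the degenerate subcase $|\xi^2-\xi+2\xi_1|\leq 1$ it abandons the modulations entirely and instead changes variables to $\eta=\xi^3-\xi^2+2\xi\xi_1$, using the Jacobian lower bound $|3\xi^2-2\xi+2\xi_1|\geq\tfrac12|\xi|^2$ together with $|\xi|^2\lesssim|\xi_1|$ to make the $\eta$-integral converge; this is where the constraints involving $4s$ are actually earned. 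That change-of-variables step is a distinct idea your plan does not contain. Two smaller points: you do not isolate the output-low-frequency region $|\xi|\lesssim 1$ (the paper's $\mathcal{R}_1$), where the resonance identity again gives nothing and one instead uses boundedness of the $\xi$-domain plus Lemma \ref{lemanovo-lema2} with exponent $4a-1$ (this is where $a>\tfrac14$ is consumed); and Lemma \ref{lema1-Holmerkdv}, which you invoke for a residual square-root singularity, plays no role in the paper's proof of Proposition \ref{acoplamento2}. Until the case analysis is carried out, the sufficiency of the four stated bounds on $k$ remains unverified.
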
 

\begin{proposition}\label{estimativau}
Let $s,k,a,b\in \mathbb{R}$ verifying one of the following conditions:
\begin{enumerate}
\item[(a)] $\frac{1}{4}<b<\frac{1}{2}$,\; $s>\frac{1}{4}$\; 
and\;\;$0\leq k\leq \min\big\{3a,\,2s+6b+3a-\frac{7}{2}\big\}$,\medskip
\medskip

\item[(b)] $\frac{1}{4}<b<\frac{1}{2}$,\; $1-2b <s\leq 3a-1/2$ and  $k\leq 0$.
\end{enumerate}

Then, there exists a constant $c=c(k,s,b,a)$
	\begin{equation*}
	\|(u_1\bar{u}_2)_x\|_{U^{k,-a}}\leq c\|u_1\|_{X^{s,b}}\|u_2\|_{X^{s,b}}
	\end{equation*}
for any $u_1,u_2\in X^{s,b}$. 
\end{proposition}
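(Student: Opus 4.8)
The plan is to prove the estimate by duality, reducing it to a weighted convolution bound, and then to exploit the algebraic resonance relation produced by the mixed Schr\"odinger--KdV symbols. First I would use $\widehat{\bar u_2}(\eta,\sigma)=\overline{\hat u_2(-\eta,-\sigma)}$ to write, after the space--time Fourier transform, $\widehat{\partial_x(u_1\bar u_2)}(\xi,\tau)=i\xi\int \hat u_1(\xi_1,\tau_1)\,\overline{\hat u_2(\xi_1-\xi,\tau_1-\tau)}\,d\xi_1 d\tau_1$. Setting $\xi_2=\xi_1-\xi$, $\tau_2=\tau_1-\tau$ and introducing the nonnegative functions $f_j(\xi_j,\tau_j)=\langle\xi_j\rangle^{s}\langle\tau_j+\xi_j^2\rangle^{b}|\hat u_j(\xi_j,\tau_j)|$, so that $\|f_j\|_{L^2}=\|u_j\|_{X^{s,b}}$, the claim is equivalent, by duality against $g\in L^2_{\xi,\tau}$ with $\|g\|_{L^2}=1$, to a uniform bound for
\begin{equation*}
\int \frac{\langle\tau\rangle^{k/3}\,\langle\tau-\xi^3\rangle^{-a}\,|\xi|}{\langle\xi_1\rangle^{s}\langle\xi_2\rangle^{s}\,\langle\tau_1+\xi_1^2\rangle^{b}\langle\tau_2+\xi_2^2\rangle^{b}}\,f_1\,f_2\,g\;d\xi_1\,d\tau_1\,d\xi\,d\tau,
\end{equation*}
a multiple integral in which $\xi=\xi_1-\xi_2$ and $\tau=\tau_1-\tau_2$ throughout.

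The engine of the proof is the resonance identity
\begin{equation*}
(\tau-\xi^3)-(\tau_1+\xi_1^2)+(\tau_2+\xi_2^2)=-\xi\big(\xi^2+\xi_1+\xi_2\big),
\end{equation*}
valid on the convolution hyperplane, which forces $\max\{\langle\tau-\xi^3\rangle,\langle\tau_1+\xi_1^2\rangle,\langle\tau_2+\xi_2^2\rangle\}\gtrsim \langle\xi(\xi^2+\xi_1+\xi_2)\rangle$. The factor $\langle\tau\rangle^{k/3}$ coming from the $U^{k,-a}$--norm is handled according to the sign of $k$: in case (b), where $k\le 0$, one simply bounds $\langle\tau\rangle^{k/3}\le 1$ and discards it, reducing matters to a $k$--free estimate; in case (a), where $k\ge 0$, one writes $\langle\tau\rangle^{k/3}\lesssim \langle\tau-\xi^3\rangle^{k/3}+\langle\xi\rangle^{k}$ (using $\tau=(\tau-\xi^3)+\xi^3$) and treats the two contributions separately. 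The first is harmless because $k\le 3a$ makes $\langle\tau-\xi^3\rangle^{k/3-a}$ a nonpositive power of the output modulation; the second replaces the time weight by the spatial weight $\langle\xi\rangle^{k}$ and reduces to a $Y^{k,-a}$--type bound analogous to the one carried out in Proposition \ref{acoplamento2}.

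Next I would split the domain into the three regions in which each of the modulations $\langle\tau-\xi^3\rangle$, $\langle\tau_1+\xi_1^2\rangle$, $\langle\tau_2+\xi_2^2\rangle$ is, up to constants, the largest; by the identity above, in each region the dominant modulation is $\gtrsim\langle\xi(\xi^2+\xi_1+\xi_2)\rangle$. In every region I would spend part of the three modulation weights, together with the gain $|\xi|$ from the derivative and the Sobolev weights $\langle\xi_1\rangle^{-s}\langle\xi_2\rangle^{-s}$, to produce a kernel square--integrable in the free variables; Cauchy--Schwarz then peels off $f_1$, $f_2$, $g$ and leaves one-- or two--dimensional integrals to be estimated by the elementary Lemmas \ref{lemanovo-lema2}, \ref{lemagtv1} and \ref{lema1-Holmerkdv}. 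The numerical hypotheses on $(s,k,a,b)$ are precisely what make each residual integral converge, and each inequality in the statement traces back to one such region.

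The main obstacle will be the near--resonant regions where $\xi(\xi^2+\xi_1+\xi_2)$ is small --- either when $|\xi|\to0$ (high--high interactions producing a low output frequency) or when $\xi^2+\xi_1+\xi_2\approx0$ --- since there the identity yields no usable smoothing and the estimate must be closed using only $\langle\xi_j\rangle^{-s}$, the derivative factor $|\xi|$, and the low modulation powers. This is exactly where the lower bounds $s>\tfrac14$ in case (a) and $s>1-2b$ in case (b), together with the coupling constraint $k\le 2s+6b+3a-\tfrac72$, are forced. Case (b), in which $s$ may be taken below $\tfrac12$ while $k\le0$, is the most delicate, since the spatial smoothing is weakest there and one must exploit the full strength of Lemma \ref{lema1-Holmerkdv} to absorb the $|\alpha-x|^{-1/2}$--type singularity arising from the KdV dispersion.
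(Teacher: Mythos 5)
Your plan is essentially the paper's proof: the authors likewise argue by duality with the resonance identity $-\xi(\xi^2-\xi+2\xi_1)$, reduce part (a) to the regime $|\tau|>10|\xi|^3$ by invoking Proposition \ref{acoplamento2} (the exact counterpart of your splitting $\langle\tau\rangle^{k/3}\lesssim\langle\tau-\xi^3\rangle^{k/3}+\langle\xi\rangle^{k}$), discard $\langle\tau\rangle^{k/3}\leq 1$ in part (b), and close the resulting kernel bounds with Lemmas \ref{lemanovo-lema2} and \ref{lemagtv1} (Lemma \ref{lema1-Holmerkdv} is never needed here). The one correction worth recording is that the hypothesis $k\leq 2s+6b+3a-\tfrac{7}{2}$ is not forced on the near-resonant set where $\xi(\xi^2+\xi_1+\xi_2)$ is small, but rather in the regime $|\xi|\geq 2$, $|\tau|>10|\xi|^3$: there the residual weight $\langle\tau-\xi^3\rangle^{k/3-a}$ is not \emph{harmless} and must be retained and traded, via $|\xi|\lesssim\langle\tau\rangle^{1/3}$, against the factor $|\xi|^{2s+1}$ that survives the $\xi_2$-integration --- discarding it after merely noting that $k\leq 3a$ makes its exponent nonpositive would leave an unbounded kernel.
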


\begin{remark}
	The estimate above with the condition in (b) is necessary to obtain the results of Theorems \ref{teorema2} and \ref{teorema22} concerning to the  IBVP \ref{SKe}. Since $2b+s>1$ and $b<1/2$ the case $s=0$  is not covered.
\end{remark}

Now we show the proofs of the statements above. We follow closely the arguments in \cite{CL}.

\subsection{Proof of Proposition \ref{acoplamento1}} Arguing as in the proof of Lemma 3.1 of \cite{CL} we need to show the following boundedness:
\begin{align}
&\left\|w_1(\xi,\tau)\!:=\!\frac{\langle \xi\rangle^{2s}}{\langle \tau+\xi^2\rangle^{2a}}\int\int \frac{\chi_{\mathcal{R}_1}d\tau_1 d\xi_1}{\langle \tau_1-\xi_1^3\rangle^{2b}\langle \xi_1\rangle^{2k}\langle \xi-\xi_1\rangle^{2s}\langle \tau-\tau_1 +(\xi-\xi_1)^2 \rangle^{2b}}\right\|_{L_{\xi,\tau}^{\infty}}\!\leq\! c,\label{w1}\\
&\left\|w_2(\xi_1,\tau_1)\!\!:=\!\!\frac{1}{\langle \xi_1\rangle^{2k}\langle \tau_1-\xi_1^3\rangle^{2b}}\int\int \frac{\langle\xi\rangle^{2s}\chi_{\mathcal{R}_2}d\tau d\xi}{\langle \tau+\xi^2\rangle^{2a}\langle \tau-\tau_1 +(\xi-\xi_1)^2 \rangle^{2b}\langle \xi-\xi_1\rangle^{2s}}\right\|_{L_{\xi_1\tau_1}^{\infty}}\!\!\leq\!\! c,\label{w2}\\
&\left\|w_3(\xi_2,\tau_2)\!\!:=\!\!\frac{1}{\langle \xi_2\rangle^{2s}\langle \tau_2-\xi_2^2\rangle^{2b}}\int\int \frac{\langle\xi_1-\xi_2\rangle^{2s}\chi_{\mathcal{R}_3}d\tau_1 d\xi_1}{\langle \tau_1-\tau_2+(\xi_1-\xi_2)^2\rangle^{2a}\langle \tau_1 -\xi_1^3 \rangle^{2b}\langle \xi_1\rangle^{2k}}\right\|_{L_{\xi_2\tau_2}^{\infty}}\leq c,\label{w3}
\end{align}
for suitable regions $\mathcal{R}_j\subset \mathbb{R}^4\; (j=1,2,3)$ such that $\mathcal{R}_1\cup \mathcal{R}_2\cup \mathcal{R}_3=\mathbb{R}^4$. Here we refine the regions 
given in \cite{CL}. More precisely, first we define the set 
$$\mathcal{A}:=\left\{(\xi,\xi_1,\tau,\tau_1)\in\mathbb{R}^4;\; |\xi_1|> 2\; \text{and}\;\ |\xi_1^2-\xi_1+2\xi|\geq \tfrac{1}{2}|\xi_1|^2\right\}$$
and then we put
$$\mathcal{R}_1:=\mathcal{R}_{1_1}\cup \mathcal{R}_{1_2}\cup\mathcal{R}_{1_3},$$
with
\begin{align*}
&\mathcal{R}_{1_1}:=\left\{(\xi,\xi_1,\tau,\tau_1)\in\mathbb{R}^4;\; |\xi_1|\leq 2\right\},\\
&\mathcal{R}_{1_2}:=\left\{(\xi,\xi_1,\tau,\tau_1)\in\mathbb{R}^4;\; |\xi_1|> 2\, \text{and}\, \ |\xi_1^2-\xi_1+2\xi|\leq \tfrac{1}{2}|\xi_1|^2\right\},\\
&\mathcal{R}_{1_3}:=\left\{(\xi,\xi_1,\tau,\tau_1)\in\mathcal{A};\; 
\max \{|\tau_1-\xi_1^3|,|\tau-\tau_1+(\xi-\xi_1)^2|,|\tau+\xi^2|\}=|\tau+\xi^2|\right\}.
\end{align*}
The two remaining regions are defined as follows:
\begin{align*}
 &\mathcal{R}_2:=\left\{(\xi,\xi_1,\tau,\tau_1)\in\mathcal{A};\; \max \big\{|\tau_1-\xi_1^3|,|\tau-\tau_1+(\xi-\xi_1)^2|,|\tau+\xi^2|\big\}=|\tau_1-\xi_1^3|\right\},\\
 &\mathcal{R}_3:=\left\{(\xi,\xi_1,\tau,\tau_1)\in\mathcal{A};\; \max \big\{|\tau_1-\xi_1^3|,|\tau-\tau_1+(\xi-\xi_1)^2|,|\tau+\xi^2|\big\}=|\tau-\tau_1+(\xi-\xi_1)^2|\right\}.
 \end{align*}

Before starting with the estimates we observe that obviously $\mathcal{R}_1\cup \mathcal{R}_2\cup \mathcal{R}_3=\mathbb{R}^4$ and notice that the points of the set $\mathcal{R}_{1_2}$ verify
the following inequality:
\begin{equation}\label{desigualdade-R12}
\tfrac{1}{2}|\xi_1|^2\leq |3\xi_1^2-2\xi_1+2\xi|.
\end{equation}
Indeed, if $(\xi,\xi_1,\tau,\tau_1)\in \mathcal{R}_{1_2}$ we have
\begin{equation*}
\begin{split}
|3\xi_1^2-2\xi_1+2\xi|&=|(2\xi_1^2-\xi_1) + (\xi_1^2-\xi_1+2\xi)|\\
&\ge |2\xi_1^2-\xi_1|-|\xi_1^2-\xi_1+2\xi|\\
&\ge |\xi_1||2\xi_1-1|-\tfrac12|\xi_1|^2\\
&\ge |\xi_1|^2-\frac12|\xi_1|^2=\tfrac12|\xi_1|^2.
\end{split}
\end{equation*}
Also we note that the hypothesis $\frac{7}{18}<2b-\tfrac12 \le a<b$ implies the following relations:
\begin{equation}\label{relations-a-b}
\frac49< b <\frac12\qquad \text{and}\qquad 4b-1\le 2a.
\end{equation}

Now we proceed to obtain \eqref{w1}. Combining the inequality $\langle\xi\rangle^{2s}\leq \langle \xi -\xi_1\rangle^{2s}\langle \xi_1\rangle^{2|s|}$  with  Lemma \ref{lemagtv1}  we get
\begin{equation}\label{first-estimation-bound-bil-uv}
w_1(\tau,\xi)\lesssim \frac{1}{\langle\tau+\xi^2\rangle^{2a}}\int \frac{\chi_{\mathcal{R}_1}\langle\xi_1\rangle^{2|s|-2k}}{\langle \tau+\xi^2-\xi_1^3+\xi_1^2-2\xi\xi_1\rangle^{4b-1}}
d\xi_1,\end{equation}
since $b>1/4$ and $a>0$. Next we estimate \eqref{first-estimation-bound-bil-uv} in each one of the sub-regions $\mathcal{R}_{1_1}$, $\mathcal{R}_{1_2}$ and $\mathcal{R}_{1_3}$.

\noindent{\textbf{Region} $\boldsymbol{\mathcal{R}_{1_1}}$}. The boundedness of the right-hand side of \eqref{first-estimation-bound-bil-uv} follows easily taking into account that the region of integrations is bounded. 

\noindent{\textbf{Region} $\boldsymbol{\mathcal{R}_{1_3}}$}. We use $|\xi_1|>2$ and $|\xi_1|^3\lesssim |\tau+\xi^2|$ to obtain from \eqref{first-estimation-bound-bil-uv} the estimate
\begin{equation*}
w_1(\tau,\xi)\lesssim \int \frac{\langle\xi_1\rangle^{2|s|-2k}}{\langle\xi_1\rangle^{6a}\langle \tau+\xi^2-\xi_1^3+\xi_1^2-2\xi\xi_1\rangle^{4b-1}}d\xi_1.
\end{equation*}
Thus, we obtain
\begin{equation*}
w_1(\tau,\xi)\lesssim \int \frac{d\xi_1}{\langle \tau+\xi^2-\xi_1^3+\xi^2-2\xi\xi_1\rangle^{4b-1}},
\end{equation*}
since  $|s|-k\leq 3a$. Hence,  Lemma \ref{lema2} allows control the last integral, once that  $b>\frac{1}{3}$. 

\noindent{\textbf{Region} $\boldsymbol{\mathcal{R}_{1_2}}$}. Is the most complicated case. The triangular inequality yields that $$\langle \tau+\xi^2\rangle\langle \tau+\xi^2-\xi_1^3+\xi_1^2-2\xi\xi_1\rangle\geq \langle \xi_1^3-\xi_1^2+2\xi\xi_1\rangle$$
and consequently
\begin{equation}\label{estimation-bound-bil-uv-A}
\begin{split}
w_1(\xi,\tau)&\lesssim \langle\tau+\xi^2\rangle^{4b-1-2a}\int \frac{\chi_{\mathcal{R}_{1_2}}\langle\xi_1\rangle^{2|s|-2k}}{\langle \xi_1^3-\xi_1^2+2\xi\xi_1\rangle^{4b-1}}d\xi_1 \le \int \frac{\chi_{\mathcal{R}_{1_2}}\langle\xi_1\rangle^{2|s|-2k}}{\langle \xi_1^3-\xi_1^2+2\xi\xi_1\rangle^{4b-1}}d\xi_1,
\end{split}
\end{equation}
where we have used \eqref{relations-a-b}. Now, making the change of variables
$$\eta=\xi_1^3-\xi_1^2+2\xi\xi_1,\quad d\eta=(3\xi_1^2-2\xi_1+2\xi)d\xi_1$$
and using the description of $\mathcal{R}_{1_2}$  combined with \eqref{desigualdade-R12} we obtain from \eqref{estimation-bound-bil-uv-A} the following inequalities:
\begin{equation*}\begin{split}
	w_1(\xi,\tau)&\lesssim \int \frac{\chi_{\{|\eta|\leq |\xi_1|^3/2\}}\langle\xi_1\rangle^{2|s|-2k}}{|3\xi_1^2-2\xi_1+2\xi|\langle\eta\rangle^{4b-1}}d\eta \lesssim \int \frac{\chi_{\{|\eta|\leq |\xi_1|^3/2\}}\langle\xi_1\rangle^{2|s|-2k}}{|\xi_1|^2\langle\eta\rangle^{4b-1}}d\eta \lesssim \int\frac{d\eta}{\langle \eta\rangle^{\frac{2k-2|s|+2}{3}+4b-1}}.
\end{split}\end{equation*}
The last integral converges if $k-|s|\geq 2-6b$. This complete the estimate in $\mathcal{R}_1$.

\noindent{\textbf{Region} $\boldsymbol{\mathcal{R}_2}$}. In this case we have
\begin{equation}\label{crb1}
\frac{1}{2}|\xi_1^3|\leq 3|\tau_1-\xi_1^3|\lesssim \langle \tau_1-\xi_1^3 \rangle.
\end{equation}
The change of variables $\eta=\tau_1-\xi_1^2+2\xi\xi_1$ with $d\eta = 2\xi_1d\xi$ in $\mathcal{R}_2$ implies
\begin{equation}\label{crb2}
|\eta|\leq |\tau_1-\xi_1^3|+|\xi_1^3-\xi_1^2+2\xi\xi_1|\lesssim \langle \tau_1-\xi_1^3\rangle.
\end{equation}
Since $a+b>1/2$ we apply Lemma \ref{lemagtv1}, combined with (\ref{crb2}) and the relation  $|\xi_1|>2 \Longrightarrow |\xi_1|\sim \langle \xi_1 \rangle$, to obtain the following estimates: 
\begin{equation*}\begin{split}
w_2(\xi_1,\tau_1) &\lesssim \frac{\langle\xi_1\rangle^{2|s|-2k}}{\langle\tau_1-\xi_1^3\rangle^{2b}}
\int\frac{\chi_{\mathcal{R}_2}d\xi}{\langle \tau_1-\xi_1^2+2\xi\xi_1\rangle^{2a+2b-1}}\lesssim \frac{|\xi_1|^{2|s|-2k}}{\langle\tau_1-\xi_1^3\rangle^{2b}}\int_{|\eta|\lesssim | \tau_1-\xi_1^3|}\frac{(1+|\eta|)^{1-2a-2b}}{2|\xi_1|} d\eta\\
&\lesssim \frac{|\xi_1|^{2|s|-2k-1}}{\langle\tau_1-\xi_1^3\rangle^{2b}}\frac{1}{\langle\tau_1-\xi_1^3\rangle^{2a+2b-2}}=\frac{|\xi_1|^{2|s|-2k-1}}{\langle\tau_1-\xi_1^3\rangle^{2a+4b-2}}.
\end{split}
\end{equation*}
Therefore, using \eqref{crb1} and that  $\frac{1}{3}< a <b$ implies $2a+4b-2 > 6a-2 >0$, we get
\begin{equation*}\begin{split}
w_2(\xi_1,\tau_1) &\lesssim \frac{|\xi_1|^{2|s|-2k -1}}{\langle\tau_1-\xi_1^3\rangle^{6a-2}}\lesssim \frac{|\xi_1|^{2|s|-2k -1}}{|\xi_1^3|^{6a-2}}=\frac{1}{|\xi_1|^{2k-2|s|+18a-5}},
\end{split}\end{equation*}
which is bounded once that  $k-|s|>-9a+\frac{5}{2}$. 

\noindent{\textbf{Region} $\boldsymbol{\mathcal{R}_3}$}. The estimate of \eqref{w3} can be obtained in the same way of the estimate of \eqref{w2}.

Then, the proof of Proposition \ref{acoplamento1} is completed. 

\subsection{Proof of Proposition \ref{estimativaw}}
In view of  Proposition \ref{acoplamento1} it suffices to prove Proposition \ref{estimativaw} under the assumption $|\tau|>10|\xi|^2$, which implies that 
$\langle\tau+\xi^2\rangle\sim \langle\tau\rangle$.

Thus, arguing as in the proof of Proposition \ref{acoplamento1} we need to show that
\begin{equation*}
	w(\xi,\tau):=\frac{\chi_{\{|\tau|>10|\xi|^2\}}}{\langle\tau+\xi^2\rangle^{2a-s}}\int\int\frac{d\tau_1d\xi_1}{\langle \tau_1-\xi_1^3\rangle^{2b}\langle \xi_1\rangle^{2k}\langle\tau-\tau_1+(\xi-\xi_1)^2\rangle^{2b}\langle \xi-\xi_1\rangle^{2s}}
\end{equation*}
is bounded. In order to estimate $w(\xi,\tau)$ we consider two cases.

\noindent{\textbf{Case 1:} $\boldsymbol{k\geq 0}$}. Using that $s, k$ are nonnegative and Lemmas \ref{lemanovo-lema2} and \ref{lemagtv1} we see that
\begin{eqnarray*}
	w(\xi,\tau)\lesssim\langle \tau\rangle^{s-2a}\int\frac{d\xi_1}{\langle \tau+\xi^2-\xi_1^3+\xi_1^2-2\xi\xi_1\rangle^{4b-1}}\leq c,
\end{eqnarray*}
where we have used that $s\leq 2a$ and $b>\frac{1}{3}$.

\noindent{\textbf{Case 2:} $\boldsymbol{k< 0}$}.
We treat separately the cases $|\xi_1|\leq 2|\xi-\xi_1|$ and $2|\xi-\xi_1|\leq |\xi_1|$. If $|\xi_1|\leq 2|\xi-\xi_1|$, then
\begin{equation*}
w(\xi,\tau)\lesssim\int \frac{d\xi_1}{\langle\tau+\xi^2-\xi_1^3+\xi^2-2\xi\xi_1\rangle^{4b-1}}\leq c_b,
\end{equation*}
where we have used that $s\leq2a$, $k+s\geq 0$ and $b>\frac{1}{3}$.

On the other hand, in the case $2|\xi-\xi_1|\leq |\xi_1|$ we have that $|\tau|> 10|\xi|^2\geq \frac{5}{2}|\xi_1|^2$ and hence it follows that $\langle\xi_1\rangle^{-2k}\lesssim \langle \tau\rangle^{-k}$. Thus
\begin{equation*}
w(\xi,\tau)\lesssim\langle\tau\rangle^{s-k-2a}\int\frac{d\xi_1}{\langle\tau+\xi^2-\xi_1^3+\xi^2-2\xi\xi_1\rangle^{4b-1}}\leq c,
\end{equation*}
provided $k-s\geq -2a$. This completes the proof of Proposition \ref{estimativaw}.
\subsection{Proof of Proposition \ref{acoplamento2}}

Let $\tau=\tau_1-\tau_2$, $\xi=\xi_1-\xi_2$, $\sigma=\tau-\xi^3$,  $\sigma_1=\tau_1+\xi_1^2$, $\sigma_2=\tau_2+\xi_2^2$. 
Following the argument of \cite{CL} and \cite{YW} we need to show that 
\begin{equation}\label{moro1}
\int\int\int\int_{\mathbb{R}^4}\frac{|\xi|\langle\xi\rangle^k f(\xi_1,\tau_1)g(\xi_2,\tau_2)\overline{\phi}(\xi,\tau)}{\langle\sigma\rangle^a\langle\sigma_1\rangle^b\langle\xi_1\rangle^s\langle\sigma_2\rangle^b\langle\xi_2\rangle^s}d\tau_1 d \xi_1 d\tau d\xi\leq \|\phi\|_{L^2}\|f\|_{L^2}\|g\|_{L^2}.
\end{equation}
Dividing the domain of integration we write the left hand side of \eqref{moro1} as 
\begin{equation}
Z_1+Z_2+Z_3+Z_4:=\int\int\int\int_{\mathcal{R}_1}+\int\int\int\int_{\mathcal{R}_2}+\int\int\int\int_{\mathcal{R}_3}+\int\int\int\int_{\mathcal{R}_4},
\end{equation}
where
\begin{eqnarray*}
	& &\mathcal{R}_1=\{(\xi,\xi_1,\tau,\tau_1)\in \mathbb{R}^4; |\xi|\leq 10  \},\\
	& & \mathcal{R}_2=\{(\xi,\xi_1,\tau,\tau_1)\in \mathbb{R}^4; |\xi|> 10,\ |\xi_1|>2|\xi_2|  \} ,\\
	& &\mathcal{R}_3=\{(\xi,\xi_1,\tau,\tau_1)\in \mathbb{R}^4; |\xi|> 10,\ |\xi_2|>2|\xi_1|  \},\\
	& & \mathcal{R}_4=\{(\xi,\xi_1,\tau,\tau_1)\in \mathbb{R}^4; |\xi|> 10,\ \frac{|\xi_2|}{2}\leq|\xi_1|\leq 2|\xi_2|  \} .
\end{eqnarray*}
\noindent{\textbf{Region} $\boldsymbol{\mathcal{R}_{1}}$}. We integrate first in $\xi$ and $\tau$ and we use the Cauchy-Schwarz inequality to obtain
\begin{equation*}
Z_1^2\lesssim\|g\|_{L_{\tau_2}^2L_{\xi_2}^2}^2\|f\|_{L_{\tau_1}^2L_{\xi_1}^2}^2\|\phi\|_{L_{\tau}^2L_{\xi}^2}^2\left\| \frac{1}{\langle \xi_1\rangle^{2s}\langle\sigma_1\rangle^{2b}}  \int\int\frac{\chi_{\{|\xi|\leq 10\}}d\tau d\xi}{\langle \sigma\rangle^{2a}\langle \sigma_2\rangle^{2b}\langle \xi_2\rangle^{2s}}\right\|_{L_{\xi_1}^{\infty}L_{\tau_1}^{\infty}}.
\end{equation*}
Using the facts $s\geq 0,\, a\leq b$ and Lemma \ref{lemagtv1} we obtain
\begin{equation*}
\frac{1}{\langle \xi_1\rangle^{2s}\langle\sigma_1\rangle^{2b}}  \int\int\frac{\chi_{\{|\xi|\leq 10\}}d\tau d\xi}{\langle \sigma\rangle^{2a}\langle \sigma_2\rangle^{2b}\langle \xi_2\rangle^{2s}}\lesssim \int\frac{\chi_{\{|\xi|\leq 10\}}d\xi}{\langle\xi^3-\xi^2+2\xi\xi_1-\tau_1-\xi_1^2\rangle^{4a-1}},
\end{equation*}
that is bounded by Lemma \ref{lema2} provided $\frac{1}{4}< a<\frac{1}{2}.$

For the estimates of $Z_2,\ Z_3$ and $Z_4$ we will use the following algebraic relation
\begin{equation}\label{Wualgebrico}
(\tau-\xi^3)-(\tau_1+\xi_1^2)+(\tau_2+\xi_2^2)=-\xi^3-\xi_1^2+(\xi_1-\xi)^2=-\xi(\xi^2-\xi+2\xi_1).
\end{equation}

\noindent{\textbf{Region} $\boldsymbol{\mathcal{R}_{2}}$}. In this case we have $\frac{1}{2}|\xi_1|\leq|\xi|\leq \frac{3}{2}|\xi_1|$, then 
\begin{equation}\label{021211}
|\xi^2-\xi+2\xi_1|\geq |\xi|^2-|\xi-2\xi_1|\geq |\xi|^2-5|\xi|\geq \frac{1}{2}|\xi|^2,
\end{equation}
since $|\xi|>10$.

Combining \eqref{Wualgebrico} and \eqref{021211} we see that
\begin{equation}\label{20111}
|\xi|^3\lesssim \max\{|\sigma|,|\sigma_1|, |\sigma_2|\} .
\end{equation}
We subdivide $\mathcal{R}_2$ in $\mathcal{R}_2=\mathcal{R}_{2_1}\cup\mathcal{R}_{2_2}\cup\mathcal{R}_{2_3}$, where
\begin{eqnarray*}
	& &\mathcal{R}_{2_1}=\{(\xi,\xi_1,\tau,\tau_1)\in \mathcal{R}_2;\max\{|\sigma|,|\sigma|, |\sigma_2|\}=|\sigma|\},\\
	& &\mathcal{R}_{2_2}=\{(\xi,\xi_1,\tau,\tau_1)\in \mathcal{R}_2;\max\{|\sigma|,|\sigma_1|, |\sigma_2|\}=|\sigma_1|\},\\
	& &\mathcal{R}_{2_3}=\{(\xi,\xi_1,\tau,\tau_1)\in \mathcal{R}_2;\max\{|\sigma|,|\sigma_2|, |\sigma_2|\}=|\sigma_2|\}.
\end{eqnarray*}
To estimate in $\mathcal{R}_{2_1}$, we integrate first in $\tau_2$ and $\xi_2$, we use Cauchy-Schwarz inequality and we use $|\xi_1|\sim |\xi|$ to control the integral by
\begin{equation}\label{paralamas2}
\chi_{\{|\xi|>10\}}\frac{\langle\xi\rangle^{2k-2s+2}}{\langle\sigma\rangle^{2a}}\int\int\frac{d\tau_2d\xi_2}{\langle\sigma_1\rangle^{2b}\langle\sigma_2\rangle^{2b}}.
\end{equation}
Now using Lemma \ref{lemagtv1} we control \eqref{paralamas2} by
\begin{equation}\label{paralamas3}
\chi_{\{|\xi|>10\}}\frac{\langle\xi\rangle^{2k-2s+2}}{\langle\sigma\rangle^{2a}}\int\frac{d\xi_2}{\langle\tau+\xi^2+2\xi\xi_2\rangle^{4b-1}}.
\end{equation}
Changing of variables $\eta=\tau+\xi^2+2\xi\xi_2$, we have $d\eta=2\xi d\xi_2$ and $|\eta|\leq 2|\sigma|$ in $\mathcal{R}_{2_1}$. Then by \eqref{20111} we control \eqref{paralamas3} by
\begin{equation}\label{paralamas4}
\chi_{\{|\xi|>10\}}\frac{\langle\xi\rangle^{2k-2s+1}}{\langle\sigma\rangle^{2a}}\int_{|\eta|\leq 2|\sigma|}\frac{d\eta}{\langle\eta\rangle^{4b-1}}\lesssim\frac{\langle\xi\rangle^{2k-2s+1}}{ \langle\sigma\rangle^{4b-2+2a}}\lesssim \langle\xi\rangle^{2k-2s-12b-6a+7},
\end{equation}
that is bounded provided $k-s\leq 6b+3a-\frac{7}{2}$.

In region $\mathcal{R}_{22}$ we use $|\xi|\sim |\xi_1|$, Lemmas \ref{lemagtv1} and \ref{lema2} to obtain
\begin{eqnarray*}
& &\frac{1}{\langle\xi_1\rangle^{2s}\langle \sigma_1\rangle^{2b}}\int\int\frac{\chi_{\{|\xi^3|\leq |\sigma_1|\}}|\xi|^2\langle\xi\rangle^{2k}d\tau d\xi}{\langle\sigma\rangle^{2a}\langle\sigma_2\rangle^{2b}}\lesssim\int\int\frac{\langle\xi\rangle^{2k-2s+2-6b}d\tau d\xi}{\langle\sigma\rangle^{2a}\langle\sigma_2\rangle^{2b}}\\
& &\quad\lesssim\int\int\frac{d\tau d\xi}{\langle\xi_2\rangle^{2s}\langle\sigma\rangle^{2a}\langle\sigma_2\rangle^{2b}}\lesssim \int\frac{d\xi}{\langle \xi^3-\xi^2+2\xi\xi_1-\tau_1-\xi_1^2\rangle^{4a-1}}\lesssim 1,
\end{eqnarray*}
where we have used that  $k-s\leq 3b-1$ and $\frac{1}{3}<a<\frac{1}{2}$.

The estimate in the region $\mathcal{R}_{23}$ can be obtained in the same way of the region $\mathcal{R}_{22}$.

\noindent{\textbf{Region} $\boldsymbol{\mathcal{R}_{3}}$}. The estimate of $Z_3$ follows the same ideas of $Z_2$. In effect, in $\mathcal{R}_3$  we have $\frac{|\xi_2|}{2}\leq|\xi|\leq 2|\xi_2|$. Then 
\begin{equation*}
|\xi^2-\xi+2\xi_1|\geq |\xi|^2-|\xi+2\xi_1|\geq |\xi|^2-3|\xi|\geq \frac{|\xi|^2}{2},
\end{equation*}
since $|\xi|>10$. Therefore the estimate can be done by separating in three cases, like in $Z_2$.

\noindent{\textbf{Region} $\boldsymbol{\mathcal{R}_{4}}$}. Finally, we estimate $Z_4$. We subdivide $\mathcal{R}_4=\mathcal{R}_{4_1}\cup\mathcal{R}_{4_2}$, where
$$\mathcal{R}_{4_1}=\{(\xi,\xi_1,\tau,\tau_1)\in \mathcal{R}_4;|\xi_1|\geq 2|\xi^2-\xi+2\xi_1|\}\: \ \text{and}\: \  \mathcal{R}_{4_2}=\mathcal{R}_4\setminus \mathcal{R}_{4_1}.$$
In $\mathcal{R}_{4_1}$ we have $|\xi^2|\leq |\xi^2-\xi+2\xi_1|+|2\xi_1-\xi|\leq 6|\xi_1|$. Then we need to show that the function
\begin{equation*}
	w_{41}(\xi_1,\tau_1):=\frac{1}{\langle\xi_1\rangle^{2s}\langle\sigma_1\rangle^{2b}}\int\int\frac{\chi_{\{|\xi|^2\leq 6|\xi_1|\}}|\xi|^2\langle\xi\rangle^{2k}d\tau d\xi}{\langle\xi_2\rangle^{2s}\langle\sigma\rangle^{2a}\langle\sigma_2\rangle^{2b}}
\end{equation*}
is bounded.
Using $a<b$, $|\xi_1|\sim |\xi_2|$ and Lemma \ref{lemagtv1} we get
\begin{equation}
	w_{41}(\xi_1,\tau_1)\lesssim \frac{1}{\langle\sigma_1\rangle^{2b}}\int\frac{\chi_{\{\xi|^2\leq 6|\xi_1|\}}\langle\xi\rangle^{2k+2-8s} d\xi}{\langle -\xi^3+\tau_1+(\xi-\xi)^2\rangle^{4a-1}}.
\end{equation}

Using  $4a-1<2b$ and triangular inequality we obtain
\begin{equation*}
	\langle\sigma_1\rangle^{4a-1}\langle-\xi^3+\tau_1+(\xi-\xi)^2 \rangle^{4a-1}\geq\langle -\xi^3-2\xi\xi_1+\xi^2\rangle^{4a-1}.
\end{equation*}
It follows that
\begin{equation}\label{05121}
	w_{41}(\xi_1,\tau_1)\leq \int\frac{\chi_{\{|\xi|^2\leq c |\xi_1|\}}\langle \xi\rangle^{2k-8s+2}d\xi}{\langle \xi^3-\xi^2+2\xi_1\xi\rangle^{4a-1}}.
\end{equation}
Now, assume $|\xi^2-\xi+2\xi_1|>1$. Then $\langle\xi^3-\xi^2+2\xi\xi_1\rangle \sim \langle \xi^2-\xi+2\xi_1\rangle\langle\xi\rangle$, provided $|\xi|>10$. Thus \eqref{05121} is controlled by
\begin{equation*}
	c\int\frac{\chi_{\{|\xi|^2\leq |\xi_1|\}}\langle \xi\rangle^{2k-8s+3-4a}d\xi}{\langle \xi^2-\xi+2\xi_1\rangle^{4a-1}}\lesssim\langle \xi_1\rangle^{k-4s+3/2-2a}\int\frac{d\xi}{\langle \xi^2-\xi+2\xi_1\rangle^{4a-1}}\lesssim 1,
\end{equation*} 
where we have used that $k\leq 4s+2a-3/2$ and $a>\frac{3}{8}$.

If $|\xi^2-\xi+2\xi_1|\leq 1$, we controlled \eqref{05121} making the change of variables $\eta=\xi^3-\xi^2+2\xi\xi_1.$ Then  $|\eta|\leq |\xi|\leq c |\xi_1|^{1/2}$ and 
\begin{equation*}
	\left|\frac{d\eta}{d \xi}\right|=|3\xi^2-2\xi+2\xi_1|\geq |2\xi^2-\xi|-|\xi^2-\xi+2\xi_1|\geq 2|\xi|^2-|\xi|-1\geq 2|\xi|^2-2|\xi|\geq \frac{1}{2}|\xi|^2,
\end{equation*}
provided $|\xi|>10$.

Therefore, \eqref{05121} is bounded by
\begin{equation*}
	\int_{|\eta|<c|\xi_1|^{1/2}}\frac{\langle\xi_1\rangle^{k-4s}d\eta}{\langle\eta\rangle^{4a-1}}\lesssim\langle \xi_1\rangle^{k-4s+1-2a}\lesssim 1,
\end{equation*}
for $k\leq 4s+2a-1$.

Now we estimate $Z_4$ in the set $\mathcal{R}_{4_2}$. In this case we use the algebraic relation \eqref{Wualgebrico} to subdivide $\mathcal{R}_{42}$ in following cases:
\begin{equation}
|\sigma|\gtrsim |\xi||\xi_1|,\ |\sigma_1|\gtrsim|\xi||\xi_1|,\ \text{or}\ |\sigma_2|\gtrsim |\xi||\xi_1|. 
\end{equation}
In each cases above we split our analysis in the situations:
$$|\xi|^2\leq 10|\xi_1| \ \text{and}\ 10|\xi_1|\leq |\xi|^2.$$
Initially we assume $$\max{\{|\sigma|,|\sigma_1|,|\sigma_2|\}}=|\sigma|\ \text{and}\ |\xi|^2\leq 10|\xi_1|.$$

In this case we have
\begin{equation}\label{05124}
\frac{\langle \xi\rangle^{2k+2-8s}}{\langle \sigma\rangle^{2a}}\int\int\frac{d\tau_2d\xi_2}{\langle \sigma_1\rangle^{2b}\langle \sigma_2\rangle^{2b}}\lesssim \frac{\langle \xi\rangle^{2k+2-8s}}{\langle \sigma\rangle^{2a}}\int\frac{d\xi_2}{\langle 2\xi\xi_2-\xi^3-\tau-2\xi^2\rangle^{4b-1}}.
\end{equation}
Now, we change the variables $\eta=2\xi\xi_2-\xi^3-\tau-2\xi^2$. Then $d\eta =2\xi d\xi_2 $ and $|\eta|\leq c |\sigma|$. Thus, the right hand side of \eqref{05124} is estimated by
\begin{equation*}
c\frac{\langle \xi\rangle^{2k+1-8s}}{\langle \sigma\rangle^{2a}}\int_{|\eta|\leq c |\sigma|}\frac{d\eta}{\langle \eta\rangle^{4b-1}}\lesssim \frac{\langle \xi\rangle^{2k+1-8s}}{\langle \sigma\rangle^{2a+4b-2}}\lesssim\frac{\langle \xi\rangle^{2k+1-8s}}{\langle \xi\rangle^{2a+4b-2}\langle\xi\rangle^{4a+8b-4}}\lesssim 1
\end{equation*} 
under the condition that $k\leq4s+3a+6b-7/2$.

The case $\max{\{|\sigma|,|\sigma_1|,|\sigma_2|\}}=|\sigma_2|$ and $|\xi|^2<10 |\xi_1|$ can be treated in the same way.

Therefore, we treat the case $10|\xi_1|\leq |\xi|^2$. Using $|\xi^2-\xi+2\xi_1|\sim |\xi|^2$ and algebraic relation \eqref{Wualgebrico} we see that
\begin{equation*}
3\max{\{|\sigma|,|\sigma_1|,|\sigma_2|\}}\geq |\xi||\xi^2-\xi+2\xi_1|\geq |\xi|^3.
\end{equation*}

Assuming $\max{\{|\sigma|,|\sigma_1|,|\sigma_2|\}}=|\sigma|$ we have 
\begin{eqnarray*}
	& &\frac{|\xi|^2\langle\xi\rangle^{2k}}{\langle\sigma\rangle^{2a}}\int\int\frac{d\tau_2d\xi_2}{\langle \xi_1\rangle^{2s}\langle \sigma_1\rangle^{2b}\langle\xi_2 \rangle^{2s}\langle \sigma_2\rangle^{2b}}\lesssim 	\frac{\langle\xi\rangle^{2k+2-4s}}{\langle\sigma\rangle^{2a}}\int\int\frac{\chi_{\{|\xi_1|\leq 10|\xi|^2\}}d\tau_2d\xi_2}{\langle \sigma_1\rangle^{2b}\langle \sigma_2\rangle^{2b}}\\
	& &\quad \lesssim \frac{\langle\xi\rangle^{2k+2-4s}}{\langle\sigma\rangle^{2a}}\int\frac{d\xi_2}{\langle2\xi\xi_2-\xi^3-\tau-2\xi^2\rangle^{4b-1}}\lesssim \frac{\langle\xi\rangle^{2k+1-4s}}{\langle\sigma\rangle^{2a}}\int_{|\eta|\leq |\sigma|}\frac{d\eta}{\langle\eta\rangle^{4b-1}}\\
	& &\quad \lesssim \frac{\langle\xi\rangle^{2k+1-4s}}{\langle\sigma\rangle^{2a+4b-2}}\lesssim \frac{\langle\xi\rangle^{2k+1-4s}}{\langle\xi\rangle^{6a+12b-6}}\lesssim 1,
\end{eqnarray*}
where we have used that $k \leq 2s+3a+6b-\frac{7}{2}$.

Now assume $\max{\{|\sigma|,|\sigma_1|,|\sigma_2|\}}=|\sigma_1|$. Then $|\sigma_1|\leq |\xi|^3$. We need to control the function
\begin{equation}
w(\xi_1,\tau_1)=\frac{1}{\langle \xi_1\rangle^{2s} \langle\tau_1+\xi_1^2\rangle^{2b}}\int\int\frac{|\xi|^2\langle\xi\rangle^{2k}d\tau d\xi}{\langle \tau-\xi^3\rangle^{2a}\langle \tau_2+\xi_2^2\rangle^{2b}\langle \xi_2\rangle^{2s}}.
\end{equation}
Using Lemmas  \ref{lemagtv1} and \ref{lemanovo-lema2} we control $w_1$ by
\begin{equation*}
c\langle \xi\rangle^{2k-4s+2-6b}\int\frac{d\xi}{\langle\xi^3-\xi^2+2\xi\xi_1+\tau_1+\xi_1^2\rangle^{4a-1}},
\end{equation*}
that is bounded provided $a>\frac{3}{8}$ and $k\leq 2s-1+3b$.

This finish the proof of Proposition \ref{acoplamento2}.

\subsection{Proof of Proposition \ref{estimativau}}

Let $\tau=\tau_1-\tau_2$, $\xi=\xi_1-\xi_2$, $\sigma=\tau-\xi^3$,  $\sigma_1=\tau_1+\xi_1^2$, $\sigma_2=\tau_2+\xi_2^2$.

\textbf{Proof of part (a).} From Proposition \ref{acoplamento2} we can assume $|\tau|>10|\xi|^3$.
Arguing as in the proof of Proposition \ref{acoplamento2} we need to show that the function
\begin{equation}
w(\tau,\xi)=\chi_{\{|\tau|>10|\xi|^3\}}\frac{|\xi|^2\langle\tau \rangle^{\frac{2k}{3}}}{\langle\sigma\rangle^{2a}}\int\int\frac{d\tau_2 d\xi_2}{\langle\xi_1\rangle^{2s}\langle\sigma_1\rangle^{2b}\langle\sigma_2\rangle^{2b}\langle\xi_2\rangle^{2s}},
\end{equation}
is bounded.

Assuming $|\xi|\leq 1$, we have that $\langle\tau-\xi^3\rangle\sim \langle\tau\rangle$. Then 
\begin{equation}\label{10123}
\chi_{\{|\tau|>10|\xi|^3\}}\frac{|\xi|^2\langle\tau \rangle^{\frac{2k}{3}}}{\langle\tau+\xi^3\rangle^{2a}}\lesssim \langle\tau \rangle^{\frac{2k}{3}-2a}\lesssim 1,
\end{equation}
for $k\leq 6a$.  

Using Lemma \ref{lemagtv1} we obtain
\begin{equation*}
\int\int\frac{d\tau_2 d\xi_2}{\langle\xi_1\rangle^{2s}\langle\sigma_1\rangle^{2b}\langle\sigma_2\rangle^{2b}\langle\xi_2\rangle^{2s}}\lesssim \int\frac{{d\xi_2}}{\langle\xi_2\rangle^{2s}\langle\xi_1\rangle^{2s}\langle\tau+\xi^2+2\xi\xi_2\rangle^{4b-1}}\lesssim\int\frac{{d\xi_2}}{\langle\xi_2\rangle^{2s}\langle\xi_1\rangle^{2s}}\lesssim 1,
\end{equation*}
where we have used $\frac{1}{4}<b<\frac{1}{2}$ and $s>\frac{1}{4}$. 

Now, if $|\xi|\geq 2$ we have
\begin{equation}\label{f1}
\frac{|\xi|^2\langle\tau \rangle^{\frac{2k}{3}}}{\langle\tau+\xi^3\rangle^{2a}}\lesssim \langle\tau \rangle^{\frac{2k}{3}-2a}|\xi|^{2}.
\end{equation} 

Since $\frac{1}{4}<b<\frac{1}{2}$ and $4b-1+2s>1$, we apply the Lemma \ref{lemagtv1} to get
\begin{eqnarray}
& &\int\int\frac{d\tau_2 d\xi_2}{\langle\xi_1\rangle^{2s}\langle\tau_1+\xi^2\rangle^{2b}\langle\tau_2+\xi_2^2\rangle^{2b}\langle\xi_2\rangle^{2s}}\lesssim\int\frac{d\xi_2}{\langle\xi_2\rangle^{2s}\langle \tau+\xi^2+2\xi\xi_2\rangle^{4b-1}}\nonumber\\
& &\quad\lesssim|\xi|^{1-4b}\int\frac{d\xi_2}{(\frac{1}{|\xi|}+|\xi_2|)^{2s}(\frac{1}{|\xi|}+|\frac{\tau+\xi^2}{2\xi}+\xi_2|)^{4b-1}}\lesssim \frac{|\xi|^{2s-1}}{\langle \tau+\xi^2\rangle^{4b+2s-2}}\lesssim \frac{|\xi|^{2s-1}}{\langle \tau\rangle^{4b+2s-2}}.\label{f2}
\end{eqnarray}

Combining (\ref{f1}) with (\ref{f2}) and using $|\tau-\xi^3|\sim |\xi|^3$ we get
\begin{equation*}
w(\tau,\xi)\lesssim\langle\tau \rangle^{\frac{2k}{3}-2a-4b-2s+2}|\xi|^{2s+1}\lesssim\langle\tau \rangle^{\frac{2k}{3}-2a-4b-2s+2+\frac{2s}{3}+\frac{1}{3}}\lesssim 1,
\end{equation*}
where we have used that $k\leq 2s+6b+3a-\frac{7}{2}$, which proves the part (a) of the Proposition \ref{estimativau}.

\textbf{Proof of part (b).} Arguing as in the proof of Proposition \ref{acoplamento2} and using $k\leq 0$ we need to show that the functions
$$w_1(\xi_1,\tau_1)=\frac{\langle\tau\rangle^{\frac{2k}{3}}}{\langle \xi_1\rangle^{2s}\langle\sigma_1\rangle^{2b}}  \int\int\frac{\chi_{|\xi|<2}d\tau d\xi}{\langle \sigma\rangle^{2a}\langle \sigma_2\rangle^{2b}\langle \xi_2\rangle^{2s}}$$
and
\begin{equation*}
w_2(\tau,\xi)=\chi_{\{|\xi|>2,\ 10|\tau|\leq |\xi|^3\}}\frac{|\xi|^2}{\langle\sigma\rangle^{2a}}\int\int\frac{d\tau_2d\xi_2}{\langle \xi_1\rangle^{2s}\langle \sigma_1\rangle^{2b}\langle\xi_2 \rangle^{2s}\langle \sigma_2\rangle^{2b}}
\end{equation*}
are bounded.

The estimate of $w_1$ can be obtained as in the estimate of $Z_1$ in the proof of Proposition \ref{acoplamento2}. 

To estimate $w_2$ we use $|\sigma|\sim |\xi|^{3}$. It follows that
\begin{equation}\label{21031}
w_2(\tau,\xi)\leq\chi_{\{|\xi|>2,\ 10|\tau|\leq |\xi|^3\}}\frac{|\xi|^2}{\langle\xi\rangle^{6a}}\int\int\frac{d\tau_2d\xi_2}{\langle \xi_1\rangle^{2s}\langle \sigma_1\rangle^{2b}\langle\xi_2 \rangle^{2s}\langle \sigma_2\rangle^{2b}}.
\end{equation}

Arguing as in the proof of part (a) we can control the right hand side of \eqref{21031} by $|\xi|^{2s+1-6a}$, that is bounded provided $s\leq 3a-\frac{1}{2}$, which proves the proposition.

\section{\textbf{Proof of the Main Results}}\label{prova-teoremas}Here we show the proof of the main results annunciated of this work. 
\subsection{Proof of the Theorem  \ref{teorema1}}\label{provadoteorema1}
Let $(s,k)\in \mathcal{D}\cup \mathcal{D}_0$. Consider $\beta=0$ for $(s,k)\in \mathcal{D}_0$ and $\beta$ any real number for $(s,k)\in \mathcal{D}$. Choice $a=a(s,k)<b=b(s,k)<\frac{1}{2}$ such that the non-linear estimates of Lemmas \ref{trilinear}, \ref{bilinear1}, and Propositions \ref{acoplamento1}, \ref{acoplamento2} and \ref{estimativau} (part (a)) are valid. Set $d=-a$.

Let $\tilde{u}_0$, $\tilde{v}_0$, $\tilde{f}$ and $\tilde{g}$ extensions of $u_0$, $v_0$, $f$ and $g$ such that $\|\tilde{u}_0\|_{H^{s}(\mathbb{R})}\leq c\|u_0\|_{H^s(\mathbb{R}^{+})}$, $\|\tilde{v}_0\|_{H^{k}(\mathbb{R})}\leq c\|v_0\|_{H^k(\mathbb{R}^+)}$, $\|\tilde{f}\|_{H^{\frac{2s+1}{4}}(\mathbb{R})}\leq c\|f\|_{H^{\frac{2s+1}{4}}(\mathbb{R}^{+})}$  and $\|\tilde{g}\|_{H^{\frac{k+1}{3}}(\mathbb{R})}\leq c\|g\|_{H^{\frac{k+1}{3}}(\mathbb{R}^{+})}$.

Using (\ref{linear}), (\ref{forcings}), \eqref{lineark}, \eqref{kdvlambda+}, (\ref{nonlinears}) and (\ref{DK}) we need to obtain a fixed point for the operator $\Lambda=(\Lambda_1,\Lambda_2)$, given by
\begin{equation*}
	\Lambda_1(u,v)=\psi(t)e^{it\partial_x^2}\tilde{u}_0(x)+\psi(t)\mathcal{S}\big(\alpha \psi_T uv+\beta\psi_T|u|^2u\big)(x,t)+\psi(t)e^{-i\frac{\lambda_1\pi}{4}}\mathcal{L}_{+}^{\lambda_1}h_1(x,t)\; \text{and}
\end{equation*}
\begin{equation*}
\Lambda_2(u,v)=\psi(t)e^{-t\partial_x^3}\tilde{v}_0(x)+\psi(t)\mathcal{K}\big(\gamma\psi_T(|u|^2)_x-\frac{1}{2}\psi_T(v^2)_x\big)(x,t)+\psi(t)e^{-i\pi\lambda}\mathcal{V}_{+}^{\lambda_2}h_2(x,t),
\end{equation*}
where 
\begin{equation*}
	h_1(t)=\left[\psi(t)\tilde{f}(t)-\psi(t)e^{it\partial_x^2}\tilde{u}_0|_{x=0}-\psi(t)\mathcal{S}\big(\alpha \psi_T uv+\beta \psi_T|u|^2u\big)(0,t)\right]\bigg|_{(0,+\infty)}
\end{equation*}
and
\begin{equation*}
	h_2(t)=\left[\psi(t)\tilde{g}(t)-\psi(t)e^{-t\partial_x^3}\tilde{v}_0|_{x=0}-\psi(t)\mathcal{K}\big(\gamma\psi_T(|u|^2)_x-\frac{1}{2}\psi_T(v^2)_x\big)(0,t)\right]\bigg|_{(0,+\infty)},
\end{equation*}
where $\lambda_1=\lambda_1(s)$ and $\lambda_2=\lambda_2(k)$ will be choosing later, so that Lemmas \ref{edbf} and \ref{cof} are to be valid.

We consider $\Lambda$ in the Banach space $Z=Z(s,k)=Z_1\times Z_2$, where
$$ Z_1=\mathcal{C}\big(\mathbb{R}_t;\,H^s(\mathbb{R}_x)\big)\cap \mathcal{C}\big(\mathbb{R}_x;\,H^{\frac{2s+1}{4}}(\mathbb{R}_t)\big)\cap X^{s,b}\, \text{and}$$
$$ Z_2=\mathcal{C}\big(\mathbb{R}_t;\,H^k(\mathbb{R}_x)\big)\cap \mathcal{C}\big(\mathbb{R}_x;\,H^{\frac{k+1}{3}}(\mathbb{R}_t)\big)\cap Y^{k,b}\cap V^{\alpha}.$$ 

Initially, we will show that the functions $\mathcal{L}_{+}^{\lambda_1}h_1$ and $\mathcal{V}_{+}^{\lambda_2}h_2$ are well defined when $u\in Z_1$ and $v\in Z_2$. For this purpose, by Lemmas \ref{edbf} and \ref{cof} it suffices to show that $h_1\in H_0^{\frac{2s+1}{4}}(\R^+)$ and $h_2\in H_0^{\frac{k+1}{3}}(\R^+)$.

Let $(s,k)\in \mathcal{D}$. By hypothesis we have that $f\in H^{\frac{2s+1}{4}}(\mathbb{\R^+})$. Lemmas \ref{sobolevh0} and \ref{grupo} (b) imply
\begin{equation}\label{aux1}
\big\|(\psi(t)e^{it\partial_x^2}\tilde{u}_0|_{x=0})|_{(0,+\infty)}\big\|_{H^{\frac{2s+1}{4}}(\mathbb{R}^+)}\leq \big\|\psi(t)e^{it\partial_x^2}\tilde{u}_0|_{x=0}\big\|_{H^{\frac{2s+1}{4}}(\mathbb{R})} \leq c\|\tilde{u}_0\|_{H^s(\mathbb{R})}.
\end{equation}

Now, Lemmas \ref{sobolevh0}, \ref{nonlinears} (b), \ref{trilinear} and \ref{T}, and Proposition \ref{acoplamento1} imply
\begin{equation}\label{aux2ap}
\begin{split}
 &\left\|\psi(t)\mathcal{S}\big(\psi_T\alpha uv+\psi_T\beta|u|^2u\big)(0,t)|_{(0,+\infty)}\right\|_{H^{\frac{2s+1}{4}}(\mathbb{R}^+)}\leq\left\|\psi(t)\mathcal{S}\big(\psi_T\alpha uv+\psi_T\beta|u|^2u\big)(0,t)\right\|_{H^{\frac{2s+1}{4}}(\mathbb{R})}\\
 &\quad\leq c\|\psi_T\big(\alpha uv+\beta|u|^2u\big)\|_{X^{s,d}} \leq cT^{\epsilon}\|\alpha uv+\beta|u|^2u\|_{X^{s,d+\epsilon}}\leq cT^{\epsilon}(\| u\|_{X^{s,b}}^3+\| u\|_{X^{s,b}}\| v\|_{Y^{k,b}}),
\end{split}
\end{equation} 
for $\epsilon$ adequately small.

If $0\leq s<\frac{1}{2}$, then $\frac{1}{4}\leq\frac{2s+1}{4}<\frac{1}{2}$, and Lemma \ref{sobolevh0} show that $H^{\frac{2s+1}{4}}(\R^+)=H_0^{\frac{2s+1}{4}}(\R^+)$. Thus \eqref{aux1} and \eqref{aux2ap} shows that $h_1\in H_0^{\frac{2s+1}{4}}(\R^+)$. 

By hypothesis $g\in H^{\frac{k+1}{3}}(\R^+)$. Lemma \ref{grupok} (b) implies
\begin{equation}\label{aux3ap}
\big\|\psi(t)e^{-t\partial_x^3}\tilde{v}_0\big|_{x=0}|_{(0,+\infty)}\big\|_{H^{\frac{k+1}{3}}(\mathbb{R}^+)}\leq\big\|\psi(t)e^{-t\partial_x^3}\tilde{v}_0\big|_{x=0}\big\|_{H^{\frac{k+1}{3}}(\mathbb{R})}\leq c\|\tilde{v}_0\|_{H^k(\mathbb{R})}.
\end{equation}
Now,  Lemmas \ref{sobolevh0}, \ref{duhamelk} (b), \ref{bilinear1} and \ref{T} and Proposition \ref{acoplamento2} imply
\begin{equation}\label{aux4ap}
\begin{split}
&\big\|\psi(t)\mathcal{K}\big[\psi_T(\gamma(|u|^2)_x-\frac{1}{2}(v^2)_x)\big](0,t)|_{(0,+\infty)}\big\|_{H^{\frac{k+1}{3}}(\mathbb{R}^+)}\\
&\quad \leq\big\|\psi(t)\mathcal{K}\big[\psi_T(\gamma(|u|^2)_x-\frac{1}{2}(v^2)_x)\big](0,t)\big\|_{H^{\frac{k+1}{3}}(\mathbb{R})}\\
&\quad\leq c\big\|\psi_T\big((|u|^2)_x-\frac{1}{2}(v^2)_x\big)\big\|_{Y^{k,d}}\leq cT^{\epsilon}\big\|(|u|^2)_x-\frac{1}{2}(v^2)_x\big\|_{Y^{k,d+\epsilon}}\\
&\quad\leq cT^{\epsilon}\big(\| u\|_{X^{s,b}}^2+\| v\|_{Y^{k,b}\cap V^{\alpha}}^2\big),
\end{split}
\end{equation} 
for $\epsilon$ adequately small. 

If $-\frac{3}{4}<k<\frac{1}{2}$, then $\frac{1}{12}<\frac{k+1}{3}<\frac{1}{2}$, and Lemma $\ref{sobolevh0}$ shows that $H^{\frac{k+1}{3}}(\R^+)=H_0^{\frac{k+1}{3}}(\R^+)$. Thus \eqref{aux3ap} and \eqref{aux4ap} shows that $h_2\in H_0^{\frac{k+1}{3}}(\R^+)$. 

If $(s,k)\in \mathcal{D}_0$ (remember that in $\mathcal{D}_0$ we assume $\beta=0$) we use the same argument to prove that $h_2\in H_0^{\frac{k+1}{3}}(\R^+)$. To show that $h_1\in H_0^{\frac{2s+1}{4}}(\R^+)$ we use a similar argument combined with Lemma \ref{alta} and the compatibility condition $u(0)=f(0)$.

The next step is to show that $\Lambda$ defines a contraction map.

Lemmas \ref{sobolevh0} (for $(s,k)\in \mathcal{D}$) or \ref{alta} (for $(s,k)\in \mathcal{D}_0$ ), \ref{grupo}, \ref{edbf}, \ref{nonlinears}, Propositions \ref{acoplamento1}, \ref{estimativaw} and Lemma \ref{T} imply that
\begin{equation}\label{cont111111}
\|\Lambda_1(u,v)\|_{Z_1}\leq  c\big(\|u_0\|_{H^s(\mathbb{R}^+)}+\|f\|_{H^\frac{2s+1}{4}(\mathbb{R}^+)}+T^{\epsilon}(\|u\|_{X^{s,b}}\|v\|_{Y^{k,b}\cap V^{\alpha}}+c_1(\beta)\|u\|_{X^{s,b}}^3)\big),
\end{equation}
where $c_1(\beta)=0$ if $\beta=0$ and $c_1(\beta)=1$, if $\beta\neq 0$ and $\epsilon<<1$.

Lemmas \ref{sobolevh0}, \ref{grupok}, \ref{cof} and \ref{duhamelk}, Propositions \ref{acoplamento2}, \ref{estimativau} and Lemma \ref{T} we have that
\begin{equation}\label{cont222222}
\|\Lambda_2(u,v)\|_{Z_2}\leq  c(\|v_0\|_{H^k(\mathbb{R}^+)}+\|g\|_{H^\frac{k+1}{3}(\mathbb{R}^+)}+T^{\epsilon}\|u\|_{X^{s,b}}^2+T^{\epsilon}\| v\|_{Y^{k,b}\cap V^{\alpha}}^2).
\end{equation}

Similarly we have
\begin{eqnarray}\label{cont333333}
	\|\Lambda(u_1,v_1)-\Lambda(u_2,v_2)\|_{Z}&\leq& cT^{\epsilon}\{ \|v_1\|_{Y^{k,b}} \|u_1-u_2\|_{X^{s,b}}+ \|u_2\|_{X^{s,b}} \|v_1-v_2\|_{Y^{k,b}}  +\nonumber\\
	& &+(\|u_1\|_{X^{s,b}}+\|u_2\|_{X^{s,b}}) \|u_1-u_2\|_{X^{s,b}}+\\
	& &+ \|v_1-v_2\|_{Y^{k,b}\cap V^{\alpha}} (\|v_1\|_{Y^{k,b}\cap V^{\alpha}}+\|v_2\|_{Y^{k,b}\cap V^{\alpha}})\nonumber\\
	& & +  (\|u_1\|_{X^{s,b}}^2+\|u_2\|_{X^{s,b}}^2)     \|u_1-u_2\|_{X^{s,b}}      \}.\nonumber
\end{eqnarray}
Set the ball of $Z$:
\begin{equation*}
	B=\{ (u,v)\in Z; \|u\|_{Z_1}\leq M_1,\ \|v\|_{Z_2}\leq M_2 \},
\end{equation*}
where $M_1=2 c \left(\|u_0\|_{H^s(\mathbb{R}^+)}+\|f\|_{H^\frac{2s+1}{4}(\mathbb{R}^+)}\right)$ e $M_2=2c\left(\|v_0\|_{H^k(\mathbb{R}^+)}+\|g\|_{H^\frac{k+1}{3}(\mathbb{R}^+)}\right)$.

Restricting $(u,v)$ on the ball $B$, we have from \eqref{cont111111}, \eqref{cont222222} and \eqref{cont333333},
\begin{equation*}
	\|\Lambda_1(u,v)\|_{Z_1}\leq  \frac{M_1}{2}+cT^{\epsilon}M_1M_2,
\end{equation*}
\begin{equation*}
	\|\Lambda_2(u,v)\|_{Z_2}\leq  \frac{M_2}{2}+cT^{\epsilon}(M_1^2+M_2^2),
\end{equation*}
\begin{equation*}
	\|\Lambda(u_1,v_1)-\Lambda(u_2,v_2)\|_{Z_1}\leq cT^{\epsilon}(M_1^2+M_1+M_2)[\|u_1-u_2\|_{Z_1}+\|v_1-v_2\|_{Z_2}],
\end{equation*}
\begin{equation*}
	\|\Lambda(u_1,v_1)-\Lambda(u_2,v_2)\|_{Z_2}\leq cT^{\epsilon}[M_1\|u_1-u_2\|_{Z_1}+M_2\|v_1-v_2\|_{Z_2}].
\end{equation*}
Then we choose $T=T(M_1,M_2)$ small enough, such that
\begin{equation*}
	\|\Lambda_1(u,v)\|_{Z_1}\leq  M_1,
\end{equation*}
\begin{equation*}
	\|\Lambda_2(u,v)\|_{Z_2}\leq  M_2
\end{equation*}
and
\begin{equation*}
\|\Lambda(u_1,v_1)-\Lambda(u_2,v_2)\|_{Z}\leq \frac12 \|(u_1,v_1)-(u_2,v_2)\|_{Z}
\end{equation*}

Thus $\Lambda$ defines a contraction in $Z\cap B$ and  we obtain a fixed point in $(u,v)$ in $B$. Therefore $$(u,v):=\big(u|_{(x,t)\in \mathbb{R}^+\times (0,T)},v|_{(x,t)\in \mathbb{R}^+\times (0,T)}\big)$$
solves the IBVP \eqref{SK} in the sense of distributions.

\subsection{Proof of the Theorem \ref{teorema11}}\label{provadoteorema11}
The proof on the region $\widetilde{\mathcal{D}}\cup\widetilde{\mathcal{D}}_0$, follows the ideas of the proof of Theorem \ref{teorema1}. We only comment some modifications of the proof. The principal differences is the position of the cutoff function $\psi_T$ in the definition of operator $\Lambda_2$ and the definition of the function $h_2$. In this situation $\Lambda_2$ and $h_2$ are given by
\begin{equation*}
\Lambda_2(u,v)=\psi(t)e^{-t\partial_x^3}\tilde{v}_0(x)+\psi(t)\mathcal{K}\big[\gamma\partial_x(|\psi_Tu|^2)-\frac{1}{2}\partial_x(v)^2\big](x,t)+\psi(t)e^{-\lambda_2\pi}\mathcal{V}_{+}^{\lambda_2}h_2(x,t),
\end{equation*}
\begin{equation*}
h_2(t)=\big(\psi(t)\tilde{g}(t)-\psi(t)e^{-t\partial_x^3}\tilde{v}_0|_{x=0}-\psi(t)\mathcal{K}[(\gamma\partial_x(|\psi_Tu|^2)-\frac{1}{2}\partial_x(v)^2)](0,t)\big)\bigg|_{(0,+\infty)}.
\end{equation*}

The  absence of the cuttof function $\psi_T$ for the nonlinear term $(v^2)_x$   it's due to the fact that in this set of regularity we need to use the modified Bourgain spaces $U^{k,b}$ and $D^{\alpha}$ that don't produce a positive power of $T$  as in the classical Bourgain spaces $Y^{k,b}$, while in the previous case we don't need the use of the modified space $U^{k,b}$.

Following the steps on the proof of Theorem \ref{teorema1} and by using Proposition \ref{estimativau} we obtain
\begin{equation}\label{cont11133}
\|\Lambda_1(u,v)\|_{Z_1}\leq  c(\|u_0\|_{H^s(\mathbb{R}^+)}+\|f\|_{H^\frac{2s+1}{4}(\mathbb{R}^+)}+T^{\epsilon}\|u\|_{X^{s,b}}\|v\|_{Y^{k,b}})+T^{\epsilon}c_1(\beta)\|u\|_{X^{s,b}},
\end{equation}
\begin{equation}\label{cont1114533}
\|\Lambda_2(u,v)\|_{Z_2}\leq  c(\|v_0\|_{H^k(\mathbb{R}^+)}+\|g\|_{H^\frac{k+1}{3}(\mathbb{R}^+)}+T^{\epsilon}\|u\|_{X^{s,b}}^2+\| v\|_{Y^{k,b}\cap V^{\alpha}}^2)
\end{equation}
and
\begin{eqnarray}\label{cont22233}
\left\|\Lambda(u_1,v_1)-\Lambda(u_2,v_2)\right\|_{Z}&\leq& c\{ T^{\epsilon}\|v_1\|_{Y^{k,b}} \|u_1-u_2\|_{X^{s,b}}+ T^{\epsilon}\|u_2\|_{X^{s,b}} \|v_1-v_2\|_{Y^{k,b}}  +\nonumber\\
& &\quad+(\|u_1\|_{X^{s,b}}+\|u_2\|_{X^{s,b}}) \|u_1-u_2\|_{X^{s,b}}\nonumber\\
& &\quad+c_1(\beta)(\|u_1\|_{X^{s,b}}^2+\|u_2\|_{X^{s,b}}^2) \|u_1-u_2\|_{X^{s,b}}\\
& &\quad+ \|v_1-v_2\|_{Y^{k,b}\cap V^{\alpha}} (\|v_1\|_{Y^{k,b}\cap V^{\alpha}}+\|v_2\|_{Y^{k,b}\cap V^{\alpha}})\}.\nonumber
\end{eqnarray}
where $c_1(\beta)=0$ if $\beta=0$ and $c_1(\beta)=1$ if $\beta\neq  0 $.

Set the following ball of $Z$:
\begin{equation*}
B=\{ (u,v)\in Z; \|u\|_{Z_1}\leq M_1,\ \|v\|_{Z_2}\leq M_2 \},
\end{equation*}
where $M_1=2 c \left(\|u_0\|_{H^s(\mathbb{R}^+)}+\|f\|_{H^\frac{2s+1}{4}(\mathbb{R}^+)}\right)$ e $M_2=2c\left(\|v_0\|_{H^k(\mathbb{R}^+)}+\|g\|_{H^\frac{k+1}{3}(\mathbb{R}^+)}\right)=2c\delta$.

Restricting $(u,v)$ on the ball $B$, by \eqref{cont11133}, \eqref{cont1114533} and \eqref{cont22233} we obtain
\begin{eqnarray*}
	& &\|\Lambda_1(u,v)\|_{Z_1}\leq  \frac{M_1}{2}+cT^{\epsilon}M_1M_2,\\
	& &\|\Lambda_2(u,v)\|_{Z_2}\leq  \frac{M_2}{2}+c(T^{\epsilon}M_1^2+M_2^2),\\
	& &\|\Lambda(u_1,v_1)-\Lambda(u_2,v_2)\|_{Z_1}\leq cT^{\epsilon}(M_1^2+M_1+M_2)(\|u_1-u_2\|_{Z_1}+\|v_1-v_2\|_{Z_2}),\\
	& &\|\Lambda(u_1,v_1)-\Lambda(u_2,v_2)\|_{Z_2}\leq c(M_1T^{\epsilon}\|u_1-u_2\|_{Z_1}+M_2\|v_1-v_2\|_{Z_2}).
\end{eqnarray*}

Then, using the smallness assumption (\ref{dadospequenosRHL}) we can choice $T=T(M_1,M_2)$ enough small, such that
\begin{equation*}
\|\Lambda_1(u,v)\|_{Z_1}\leq  M_1,
\end{equation*}
\begin{equation*}
\|\Lambda_2(u,v)\|_{Z_2}\leq  M_2,
\end{equation*}
\begin{equation*}
\|\Lambda(u_1,v_1)-\Lambda(u_2,v_2)\|_{Z\cap B}\leq \frac{1}{2}\|(u_1-u_2,v_1-v_2)\|_{Z}.
\end{equation*}

Thus,  $\Lambda$ defines a contraction map in $Z\cap B$ and we obtain a fixed point $(u,v)$ in $B$. Therefore the restriction $$(u,v):=\big(u|_{(x,t)\in \mathbb{R}^+\times (0,T)},v|_{(x,t)\in \mathbb{R}^+\times (0,T)}\big)$$
is the required solution.

\subsection{Proof of the Theorem \ref{teorema2}}\label{provadoteorema2}
Let $(s,k)\in \mathcal{E}\cup \mathcal{E}_0$. Consider $\beta=0$ for $(s,k)\in \mathcal{E}_0$ and $\beta$ any real number for $(s,k)\in \mathcal{E}$. Choice $a=a(s,k)<b=b(s,k)<\frac{1}{2}$ such that the nonlinear estimates given by Lemmas \ref{trilinear}, \ref{bilinear1} and Propositions \ref{acoplamento1}, \ref{estimativaw}, \ref{acoplamento2} and \ref{estimativau}  are valid. Set $d=-a$.

Let $\tilde{u}_0$, $\tilde{v}_0$, $\tilde{f}$, $\tilde{g}$ and $\tilde{h}$ extensions of $u_0$, $v_0$, $f$, $g$ and $h$ such that $\|\tilde{u}_0\|_{H^{s}(\mathbb{R})}\leq c\|u_0\|_{H^s(\mathbb{R}^{-})}$, $\|\tilde{v}_0\|_{H^{k}(\mathbb{R})}\leq c\|v_0\|_{H^k(\mathbb{R}^-)}$, $\|\tilde{f}\|_{H^{\frac{2s+1}{4}}(\mathbb{R})}\leq c\|f\|_{H^{\frac{2s+1}{4}}(\mathbb{R}^{+})}$, $\|\tilde{g}\|_{H^{\frac{k+1}{3}}(\mathbb{R})}\leq c\|g\|_{H^{\frac{k+1}{3}}(\mathbb{R}^{+})}$ and $\|\tilde{h}\|_{H^{\frac{k}{3}}(\mathbb{R})}\leq c\|h\|_{H^{\frac{k}{3}}(\mathbb{R}^{+})}$ .

By (\ref{forcings2}), (\ref{deltaprime}), (\ref{nonlinears}) and (\ref{DK}) we need to obtain a fixed point for the operator $\Lambda=(\Lambda_1,\Lambda_2)$, where
\begin{equation*}
	\Lambda_1(u,v)=\psi(t)e^{it\partial_x^2}\tilde{u}_0(x)+\psi(t)\mathcal{S}\big(\alpha \psi_T uv+\beta\psi_T|u|^2u\big)(x,t)+\psi(t)\mathcal{L}_{-}^{\lambda}h_1(x,t),
\end{equation*}
where 
\begin{equation*}
	h_1(t)=e^{-i\frac{\lambda\pi}{4}}\big(\psi(t)\tilde{f}(t)-\psi(t)e^{it\partial_x^2}\tilde{u}_0|_{x=0}-\psi(t)\mathcal{S}(\alpha \psi_T uv+\beta\psi_T|u|^2u)(0,t)\big)\bigg|_{(0,+\infty)}
\end{equation*}
and
\begin{equation*}
	\Lambda_2(u,v)=\psi(t)e^{-t\partial_x^3}\tilde{v}_0(x)+\psi(t)\mathcal{K}\big[\gamma\psi_T(|u|^2)_x-\frac{1}{2}\psi_T(v^2)_x\big](x,t)+\psi(t)\mathcal{V}h_2(x,t)+\psi(t)\mathcal{V}^{-1}h_3(x,t),
\end{equation*}
where $h_2$ and $h_3$ are given by
\[ \left[\begin{array}{c}
h_2  \\
h_3   \end{array} \right]=\frac{1}{3}
\left[\begin{array}{cc}
2 & -1 \\
-1 & -1 \end{array} \right]
A, \]
where
\[A=\left[\begin{array}{c}
\psi(t)\big(\tilde{g}-e^{\cdot\partial_x^3}\tilde{v}_0|_{x=0}-\mathcal{K}\big[\gamma\psi_T(|u|^2)_x-\frac{1}{2}\psi_T(v^2)_x\big](0,t)\big)\bigg|_{(0,+\infty)}\\
\mathcal{I}_{\frac{1}{3}}\psi(t)\big(\tilde{h}-\partial_xe^{\cdot\partial_x^3}\tilde{v}_0-\partial_x\mathcal{K}\big[\gamma\psi_T(|u|^2)_x-\frac{1}{2}\psi_T(v^2)_x\big](0,t)\big)\bigg|_{(0,+\infty)}\end{array} \right]. \]

Note that is this set of regularity we don't need to use the family of classes Duhamel boundary operator $\mathcal{V}_-^{\lambda}$.

We consider $\Lambda$ in the space $Z=Z(s,k)=Z_1\times Z_2$, where $$Z_1=\mathcal{C}\big(\mathbb{R}_t;\,H^s(\mathbb{R}_x)\big)\cap \mathcal{C}\big(\mathbb{R}_x;\,H^{\frac{2s+1}{4}}(\mathbb{R}_t)\big)\cap X^{s,b},$$  
\begin{equation*}
Z_2=\{w\in \mathcal{C}\big(\mathbb{R}_t;\,H^k(\mathbb{R}_x)\big)\cap \mathcal{C}\big(\mathbb{R}_x;\,H^{\frac{k+1}{3}}(\mathbb{R}_t)\big)\cap Y^{k,b}\cap V^{\alpha};\partial_x w\in \mathcal{C}\big(\mathbb{R}_x;\,H^{\frac{k}{3}}(\mathbb{R}_t)\big)  \},
\end{equation*}
with norm
\begin{eqnarray*}
	\|(u,v)\|_{Z}&=&\|u\|_{Z_1}+\|v\|_{Z_2}\\
	&:=&\|u\|_{\mathcal{C}\big(\mathbb{R}_t;\,H^s(\mathbb{R}_x)\big)}+\|u\|_{\mathcal{C}\big(\mathbb{R}_x;\,H^{\frac{2s+1}{4}}(\mathbb{R}_t)\big)}+\|u\|_{X^{s,b}}\\
	& &+\|v\|_{\mathcal{C}\big(\mathbb{R}_t;\,H^k(\mathbb{R}_x)\big)}+\|v\|_{\mathcal{C}\big(\mathbb{R}_x;\,H^{\frac{k+1}{3}}(\mathbb{R}_t)\big)}+\|v\|_{ Y^{k,b}}+\|v\|_{V^{\alpha}}+\|\partial_x v\|_{\mathcal{C}\big(\mathbb{R}_x;\,H^{\frac{k}{3}}(\mathbb{R}_t)\big)}.
\end{eqnarray*}

Arguing as in the proof of Theorem \ref{teorema1} we have that the function $\mathcal{L}_{-}^{\lambda}h_1(x,t)$ is well defined.

 Now we prove that the functions $\mathcal{V}^{-1}h_2(x,t)$ and $\mathcal{V}^{-1}h_3(x,t)$ are well defined. Arguing as in the proof of Theorem \ref{teorema1} we see that
\begin{equation}\label{09121} \psi(t)\big(\tilde{g}-e^{\cdot\partial_x^3}\tilde{v}_0|_{x=0}-\mathcal{K}\big[\gamma\psi_T(|u|^2)_x-\frac{1}{2}\psi_T(v^2)_x\big](0,t)\big)\bigg|_{(0,+\infty)}\in H_0^{\frac{k+1}{3}}(\R^+).
\end{equation}
 
 By Lemmas \ref{lemaint}, \ref{grupok}, \ref{duhamelk}, and Propositions \ref{bilinear1} and \ref{acoplamento2}  we have
 \begin{equation}\label{liga1}
 \begin{split}
 &\left\|\mathcal{I}_{\frac{1}{3}}\psi(t)\big(\tilde{h}-\partial_xe^{\cdot\partial_x^3}v_0-\partial_x\mathcal{K}\big[\gamma\psi_T(|u|^2)_x-\frac{1}{2}\psi_T(v^2)_x\big](0,t)\big)\bigg|_{(0,+\infty)}\right\|_{H^{\frac{k+1}{3}}(\mathbb{R}^+)}\\
 &\quad \leq c \left\|\psi(t)\big(\tilde{h}-\partial_xe^{\cdot\partial_x^3}v_0-\partial_x\mathcal{K}\big[\gamma\psi_T(|u|^2)_x-\frac{1}{2}\psi_T(v^2)_x\big](0,t)\big)\right\|_{H^{\frac{k}{3}}(\mathbb{R})}\\
 & \quad\leq c \|h\|_{H^{\frac{k}{3}}(\mathbb{R}^+)}+\|v_0\|_{H^{k}(\mathbb{R}^-)}+\|\psi_T(v^2)_x\|_{Y^{k,-a}}+\|\psi_T(|u|^2)_x\|_{Y^{k,-a}}\\ 
 & \quad\leq c \|h\|_{H^{\frac{k}{3}}(\mathbb{R}^+)}+\|v_0\|_{H^{k}(\mathbb{R}^-)}+T^{\epsilon}\|(v^2)_x\|_{Y^{k,-a+\epsilon}}+T^{\epsilon}\|(|u|^2)_x\|_{Y^{k,-a+\epsilon}} \\
 & \quad\leq c \|h\|_{H^{\frac{k}{3}}(\mathbb{R}^+)}+\|v_0\|_{H^{k}(\mathbb{R}^-)}+T^{\epsilon}\|v\|_{Y^{k,b}}^2+T^{\epsilon}\|u\|_{X^{s,b}}^2.
 \end{split}
 \end{equation}
It follows that $$\mathcal{I}_{\frac{1}{3}}\psi(t)\big(\tilde{h}-\partial_xe^{\cdot\partial_x^3}v_0-\partial_x\mathcal{K}\big[\gamma\psi_T(|u|^2)_x-\frac{1}{2}\psi_T(v^2)_x\big](0,t)\big)\bigg|_{(0,+\infty)}\in H^{\frac{k+1}{3}}(\R^+).$$ Since $(s,k)\in \mathcal{E}\cup\mathcal{E}_0$ we have $0\leq k<\frac{1}{2}$, then $0\leq\frac{k+1}{3}<\frac{1}{2}$. Thus Lemma \ref{sobolevh0} implies that
\begin{equation}\label{09122}
 \mathcal{I}_{\frac{1}{3}}\psi(t)\big(\tilde{h}-\partial_xe^{\cdot\partial_x^3}v_0-\partial_x\mathcal{K}\big[\gamma\psi_T(|u|^2)_x-\frac{1}{2}\psi_T(v^2)_x\big](0,t)\big)\bigg|_{(0,+\infty)}\in H_0^{\frac{k+1}{3}}(\R^+). 
\end{equation}
Thus \eqref{09121} and \eqref{09122} show that the functions $\mathcal{V}^{-1}h_2(x,t)$ and $\mathcal{V}^{-1}h_3(x,t)$ are well defined. 

Now we show that $\Lambda$ defines a contraction map in a ball of $Z$.

Arguing as in the proof of Theorem \ref{teorema1} we obtain
\begin{equation*}
\|\Lambda_1(u,v)\|_{Z_1} \leq cT^{\epsilon}\| u\|_{X^{s,b}}\| v\|_{Y^{k,b}}+cT^{\epsilon}\|u\|_{X^{s,b}}^3+c\|u_0\|_{H^s(\mathbb{R}^-)}+ c\|f\|_{H_0^{\frac{2s+1}{4}}(\mathbb{R}^+)},
\end{equation*}
\begin{equation}\label{liga2}
\begin{split}
&\left\|\psi(t)e^{-t\partial_x^3}\tilde{v}_0(x)+\psi(t)\mathcal{K}[\gamma\psi_T(|u|^2)_x-\frac{1}{2}\psi_T(v^2)_x](x,t)\right\|_{ \mathcal{C}\big(\mathbb{R}_t;\,H^k(\mathbb{R}_x)\big)\cap \mathcal{C}\big(\mathbb{R}_x;\,H^{\frac{k+1}{3}}(\mathbb{R}_t)\big)\cap Y^{k,b}\cap V^{\alpha}}\\
&\quad\leq c\|v_0\|_{H^k(\mathbb{R}^-)}+T^{\epsilon}\|u\|_{X^{s,b}}^2+T^{\epsilon}\|v\|_{Y^{k,b}\cap V^{\alpha}}^2.
\end{split}
\end{equation}
Using Lemmas \ref{grupok} and \ref{duhamelk} we see that
\begin{equation}\label{liga3}
\begin{split}
&\left\|\partial_x(\psi(t)e^{-t\partial_x^3}\tilde{v}_0(x)+\psi(t)\mathcal{K}\big[\gamma\psi_T(|u|^2)_x-\frac{1}{2}\psi_T(v^2)_x\big](x,t))\right\|_{ \mathcal{C}\big(\mathbb{R}_x;\,H^{\frac{k}{3}}(\mathbb{R}_t)\big)}\\
&\quad \leq c\|v_0\|_{H^k(\mathbb{R}^-)}+T^{\epsilon}\|u\|_{X^{s,b}}^2+T^{\epsilon}\|v\|_{Y^{k,b}\cap V^{\alpha}}^2.
\end{split}
\end{equation}
Set $W_2= \mathcal{C}\big(\mathbb{R}_t;\,H^k(\mathbb{R}_x)\big)\cap \mathcal{C}\big(\mathbb{R}_x;\,H^{\frac{k+1}{3}}(\mathbb{R}_t)\big)\cap Y^{k,b}\cap V^{\alpha}.$ Arguing as in the proof of Theorem \ref{teorema1} we obtain
\begin{equation}\label{liga4}
\begin{split}
&\left\|\mathcal{V}\left(\psi(t)\big(\tilde{g}-e^{\cdot\partial_x^3}\tilde{v}_0|_{x=0}-\mathcal{K}[\gamma\psi_T(|u|^2)_x-\frac{1}{2}\psi_T(v^2)_x](0,t)\big)\bigg|_{(0,+\infty)}\right)\right\|_{W_2}\\
&\quad \leq c \|g\|_{H^{\frac{k+1}{3}}(\mathbb{R}^+)}+\|v_0\|_{H^{k}(\mathbb{R}^-)}+T^{\epsilon}\|v\|_{H^{k}(\mathbb{R})}^2+T^{\epsilon}\|u\|_{H^s(\mathbb{R})}^2.
\end{split}
\end{equation}Using the estimate of derivatives in Lemma \ref{cof}  we obtain
\begin{equation}\label{liga5}
\begin{split}
&\left\|\partial_x\mathcal{V}\left(\psi(t)\big(\tilde{g}-e^{\cdot\partial_x^3}\tilde{v}_0|_{x=0}-\mathcal{K}[\gamma\psi_T(|u|^2)_x-\frac{1}{2}\psi_T(v^2)_x](0,t)\big)\bigg|_{(0,+\infty)}\right)\right\|_{\mathcal{C}\big(\mathbb{R}_x;\,H^{\frac{k}{3}}(\mathbb{R}_t)\big)}\\
&\quad  \leq c \|g\|_{H^{\frac{k+1}{3}}(\mathbb{R}^+)}+\|v_0\|_{H^{k}(\mathbb{R}^-)}+T^{\epsilon}\|v\|_{H^{k}(\mathbb{R})}^2+T^{\epsilon}\|u\|_{H^s(\mathbb{R})}^2.
\end{split}
\end{equation}
By Lemma \ref{cof} and estimate \eqref{liga1} we have
\begin{equation}\label{liga6}
\begin{split}
& \left\|\mathcal{V}\left(\mathcal{I}_{\frac{1}{3}}\psi(t)\big(\tilde{h}-\partial_xe^{\cdot\partial_x^3}v_0-\partial_x\mathcal{K}\big[\gamma\psi_T(|u|^2)_x-\frac{1}{2}\psi_T(v^2)_x\big](0,t)\big)\bigg|_{(0,+\infty)}\right)\right\|_{W_2}\\
&\quad \leq c\|v_0\|_{H^k(\mathbb{R}^-)}+T^{\epsilon}\|u\|_{X^{s,b}}^2+T^{\epsilon}\|v\|_{Y^{k,b}\cap V^{\alpha}}^2.
\end{split}
\end{equation}
	Combining the estimates \eqref{liga2}, \eqref{liga3}, \eqref{liga4}, \eqref{liga5} and \eqref{liga6}, we obtain

	\begin{equation*}
	\|\Lambda_2(u,v)\|_{Z_2}\leq c \|v_0\|_{H^{k}(\mathbb{R}^-)}+\|g\|_{H^{\frac{k+1}{3}}(\mathbb{R}^+)}+\|h\|_{H^{\frac{k}{3}}(\mathbb{R}^+)}+T^{\epsilon}\|v\|_{Y^{k,b}}^2+T^{\epsilon}\|u\|_{X^{s,b}}^2.
	\end{equation*}

We then proceed in the manner of Theorem \ref{teorema1} to complete the proof of Theorem \ref{teorema2}.

\subsection{Proof of the Theorem \ref{teorema22}}\label{provadoteorema22}

Let $(s,k)\in \widetilde{\mathcal{E}}_1\cup \widetilde{\mathcal{E}}_{1_{0}} \cup\widetilde{\mathcal{E}}_2\cup \widetilde{\mathcal{E}}_{2_{0}}$. Choice $a=a(s,k)<b=b(s,k)<\frac{1}{2}$ such that the non-linear estimates of Lemmas \ref{trilinear}, \ref{bilinear1} and Propositions \ref{acoplamento1}, \ref{acoplamento2} and \ref{estimativau} are valid. Set $d=-a$.
Let $\tilde{u}_0$, $\tilde{v}_0$, $\tilde{f}$, $\tilde{g}$ and $\tilde{h}$ nice extensions of $u_0$, $v_0$, $f$, $g$ and $h$.
. Let $\lambda_1=\lambda_1(s)$, $\lambda_2=\lambda_2(k)$ and $\lambda_{3}=\lambda_2(k)$ such that Lemmas \ref{edbf} and \ref{cof} are valid.

By using (\ref{linear}), (\ref{forcings2}), \eqref{lineark}, \eqref{kdvlambda-}, (\ref{nonlinears}) and (\ref{DK}) we need to obtain a fixed point to operator $\Lambda=(\Lambda_1,\Lambda_2)$, given by
\begin{equation*}
\Lambda_1(u,v)=\psi(t)e^{it\partial_x^2}\tilde{u}_0(x)+\psi(t)\mathcal{S}\big(\alpha \psi_T uv+\beta\psi_T|u|^2u\big)(x,t)+\psi(t)\mathcal{L}^{\lambda_1}h_1(x,t)
\end{equation*}
and
\begin{equation*}
\Lambda_2(u,v)\!\!=\!\!\psi(t)e^{-t\partial_x^3}\tilde{v}_0(x)+\psi(t)\mathcal{K}\big[\gamma\psi_T(|u|^2)_x-\frac{1}{2}(v^2)_x\big](x,t)\!+\!\psi(t)\mathcal{V}_{-}^{\lambda_2}h_2(x,t)\!+\!\psi(t)\mathcal{V}_{-}^{\lambda_3}h_3(x,t),
\end{equation*}
where 
\begin{equation*}
h_1(t)=e^{-i\frac{\lambda\pi}{4}}\big(\tilde{f}(t)-\psi(t)e^{it\partial_x^2}\tilde{u}_0|_{x=0}-\psi(t)\mathcal{S}(\alpha \psi_T uv+\beta \psi_T|u|^2u)(0,t)\big)\big|_{(0,+\infty)},
\end{equation*}
\begin{equation*}
\left[\begin{array}{c}
h_2(t)\\
h_3(t)
\end{array}\right]=A\left[\begin{array}{c}
\psi(t)\big(\tilde{g}-e^{\cdot\partial_x^3}\tilde{v}_0|_{x=0}-\mathcal{K}[\gamma\psi_T(|u|^2)_x-\frac{1}{2}(v^2)_x](0,t)\big)\bigg|_{(0,+\infty)}\\
\mathcal{I}_{\frac{1}{3}} \psi(t)\big(\tilde{h}-\partial_xe^{\cdot\partial_x^3}\tilde{v}_0-\partial_x\mathcal{K}[\gamma\psi_T(|u|^2)_x-\frac{1}{2}(v^2)_x](0,t)\big)\bigg|_{(0,+\infty)}\end{array} \right]
\end{equation*}
and
\begin{equation*}
A=\frac{1}{2\sqrt{3}\,\text{sin}\ \pi/3(\lambda_3-\lambda_2)}\left[\begin{array}{cc}
\,\text{sin}\left(\frac{\pi}{3}\lambda_3-\frac{\pi}{6}\right)& -\,\text{sin}\left(\frac{\pi}{3}\lambda_3+\frac{\pi}{6}\right)\\
-\,\text{sin}\left(\frac{\pi}{3}\lambda_2-\frac{\pi}{6}\right)& \,\text{sin}\left(\frac{\pi}{3}\lambda_2+\frac{\pi}{6}\right)\end{array}\right].
\end{equation*}
Let $Z$ the space given in the proof of Theorem \ref{teorema2}.

Arguing as in the proof of Theorem  \ref{teorema2} we have that $\mathcal{L}h_1(x,t)$ is well defined and 
\begin{equation}\label{crbxparana1}
\|\Lambda_1(u,v)\|_{Z_1} \leq cT^{\epsilon}\| u\|_{X^{s,b}}\| v\|_{Y^{k,b}}+cc_1(\beta)T^{\epsilon}\|u\|_{X^{s,b}}^3+c\|u_0\|_{H^s(\mathbb{R}^-)}+ c\|f\|_{H_0^{\frac{2s+1}{4}}},
\end{equation}
where $c_1(\beta)=0$ if $\beta=0$ and $c_1(\beta)=1$ if $\beta\neq0$.

Arguing as in the proof of Theorem \ref{teorema2} we have that the functions $\mathcal{V}_{-}^{\lambda_2}h_2$ and $\mathcal{V}_{-}^{\lambda_3}h_3$ are well defined.

Lemma \ref{cof} implies
 \begin{equation}\label{galo1}
 \begin{split}
 & \left\|\mathcal{V}_{-}^{\lambda_2}\left(\psi(t)\big(g-e^{\cdot\partial_x^3}\tilde{v}_0|_{x=0}-\mathcal{K}\big[\gamma\psi_T(|u|^2)_x-\frac{1}{2}(v^2)_x\big](0,t)\big)\bigg|_{(0,+\infty)}\right)\right\|_{Z_2}\\
 &\quad \leq c \left\|\psi(t)\big(g-e^{\cdot\partial_x^3}\tilde{v}_0|_{x=0}-\mathcal{K}[\gamma\psi_T(|u|^2)_x-\frac{1}{2}(v^2)_x](0,t)\big)\bigg|_{(0,+\infty)}\right\|_{H_0^{\frac{k+1}{3}}(\mathbb{R}^+)},
 \end{split} 
 \end{equation}
 \begin{equation}\label{galo2}
 \begin{split}
 & \left\|\mathcal{V}_{-}^{\lambda_3}\psi(t)\mathcal{I}_{\frac{1}{3}}\left(\tilde{h}-\partial_xe^{\cdot\partial_x^3}\tilde{v}_0-\partial_x\mathcal{K}[\gamma\psi_T(|u|^2)_x-\frac{1}{2}(v^2)_x](0,t)\right)\bigg|_{(0,+\infty)}\right\|_{Z_2}\\
 &\quad \leq c \left\|\psi(t)\mathcal{I}_{\frac{1}{3}}\left(\tilde{h}-\partial_xe^{\cdot\partial_x^3}\tilde{v}_0-\partial_x\mathcal{K}[\gamma\psi_T(|u|^2)_x-\frac{1}{2}(v^2)_x](0,t)\right)\bigg|_{(0,+\infty)}\right\|_{H_0^{\frac{k+1}{3}}(\mathbb{R}^+)}
 \end{split} 
 \end{equation}
provided $-1<\lambda_2,\lambda_3<\min \{\frac{1}{2},k+\frac{1}{2}\}$.   
 
From Lemmas \ref{lemaint}, \ref{grupok}, \ref{duhamelk}, \ref{bilinear1} and Proposition \ref{acoplamento2} we have
 \begin{equation}\label{galo3}
 \begin{split}
 &\left\|\mathcal{I}_{\frac{1}{3}}\psi(t)\big(\tilde{h}-\partial_xe^{\cdot\partial_x^3}v_0-\partial_x\mathcal{K}\big[\gamma\psi_T(|u|^2)_x-\frac{1}{2}(v^2)_x\big](0,t)\big)\bigg|_{(0,+\infty)}\right\|_{H_0^{\frac{k+1}{3}}(\mathbb{R}^+)}\\
 & \quad\leq c \left\|\psi(t)\big(h-\partial_xe^{\cdot\partial_x^3}v_0-\partial_x\mathcal{K}[\gamma\psi_T(|u|^2)_x-\frac{1}{2}(v^2)_x](0,t)\big)\right\|_{H^{\frac{k}{3}}(\mathbb{R})}\\
 &\quad \leq c( \|h\|_{H^{\frac{k}{3}}(\mathbb{R}^+)}+\|v_0\|_{H^{k}(\mathbb{R}^-)}+\|(v^2)_x\|_{Y^{k,-a}}+\|(v^2)_x\|_{U^{k,-a}}+\|\psi_T(|u|^2)_x\|_{Y^{k,-a}})\\ 
 &\quad \leq c( \|h\|_{H^{\frac{k}{3}}(\mathbb{R}^+)}+\|v_0\|_{H^{k}(\mathbb{R}^-)}+\|(v^2)_x\|_{Y^{k,-a}}+\|(v^2)_x\|_{U^{s,-a}}+T^{\epsilon}\|(|u|^2)_x\|_{Y^{k,-a+\epsilon}}) \\
 & \quad\leq c (\|h\|_{H^{\frac{k}{3}}(\mathbb{R}^+)}+\|v_0\|_{H^{k}(\mathbb{R}^-)}+\|v\|_{Y^{k,b}\cap V^{\alpha}}^2+T^{\epsilon}\|u\|_{X^{s,b}}^2).
 \end{split}
 \end{equation}
  Using Lemma \ref{duhamelk} and the estimates \eqref{galo1}, \eqref{galo2}, \eqref{galo3} we obtain
 \begin{equation}\label{chavee23}
 \left\|\Lambda_2(u,v)\right\|_{Z_2}\leq c( \|v_0\|_{H^{k}(\mathbb{R}^-)}\!+\!\|g\|_{H^{\frac{k+1}{3}}(\mathbb{R}^+)}\!+\!\|h\|_{H^{\frac{k}{3}}(\mathbb{R}^+)}\!+\!\|v\|_{Y^{k,b}}^2+T^{\epsilon}\|u\|_{X^{s,b}}\|v\|_{Y^{k,b}\cap V^{\alpha}}).
 \end{equation}
 Combining \eqref{crbxparana1} with \eqref{chavee23} we obtain
 \begin{equation}\label{cont1113}
 	\begin{split}
 		\|\Lambda(u,v)\|_{Z}&\leq  c(\|u_0\|_{H^s(\mathbb{R}^-)}+\|v_0\|_{H^k(\mathbb{R}^-)}+\|f\|_{H^\frac{2s+1}{4}(\mathbb{R}^+)}+\|g\|_{H^\frac{k+1}{3}(\mathbb{R}^+)}+\|h\|_{H^\frac{k}{3}(\mathbb{R}^+)})\\
 		&  \quad +c(T^{\epsilon}\|u\|_{X^{s,b}}\|v\|_{Y^{k,b}}+\| v\|_{Y^{k,b}\cap V^{\alpha}}^2+T^{\epsilon}\| u\|_{X^{s,b}}^2).\nonumber
 	\end{split}
 \end{equation}
 We then proceed as in the proof of Theorem \ref{teorema11} to finish the proof of Theorem \ref{teorema22}.



\begin{thebibliography}{99}


\bibitem{BOP}
{D. Bekiranov, T. Ogawa and G. Ponce}. 
{\it Weak solvability and well-posedness of a coupled Schr\"odinger-Korteweg-de Vries equation for capillary-gravity wave interactions}, 
{Proc. Amer. Math. Soc., \textbf{125} (1997), 2907-2919}.

\smallskip 
\bibitem{Bona0}
{J.  L. Bona, S. M. Sun and B. Y. Zhang}.
{\it A non-homogeneous boundary value problem of the KdV equation in a quarter plane}, 
{Trans. Amer. Math. Soc.,  \textbf{5} (2002), 427--490}.

\smallskip 
\bibitem{Bona}
{J. L. Bona, S. M. Sun and B. Y. Zhang}.
{\it Boundary smoothing properties of the Korteweg-de Vries equation in a quarter plane and applications},
{Dyn. Partial Differ. Equ., \textbf{3} (2006), 1-70}.

\smallskip 
\bibitem{Bona1} 
{J. L. Bona, S. M. Sun and B. Y. Zhang}. 
{Non-homogeneous Boundary-Value Problems for One-Dimensional Nonlinear Schr\"odinger Equations},  
{J. Math. Pures Appl. (9) \textbf{109} (2018), 1--66}.

\smallskip 
\bibitem{Bourgain1}
{J. Bourgain}.
{\it Fourier transform restriction phenomena for certain lattice subsets and applications to nonlinear evolution equations I. Schr\"odinger equations},
{Geometric and Funct. Anal., \textbf{3} (1993), 107-156}.

\smallskip 
\bibitem{Tzirakis}
{M. B.  Erdogan and N. Tzirakis}.
{\it Regularity properties of the cubic nonlinear Schr\"odinger equation on the half line}, 
{J. Funct. Anal., \textbf{271} (2016), 2539--2568}.

\smallskip 
\bibitem{Cavalcante}
{M. Cavalcante}. 
{\it The initial-boundary value problem for some quadratic nonlinear Schr\"odinger equations on the half-line},
{Differential and Integral Equations, (7-8)  \textbf{30} (2017), 521--554.} 

\smallskip 
\bibitem{CK} 
{J. E. Colliander and C. E. Kenig}.
{\it The generalized Korteweg-de Vries equation on the half line}, 
{Comm. Partial Differential Equations, \textbf{27} (2002), 2187-2266}. 

\smallskip 
\bibitem{CL} A. J. Corcho and F. Linares.
{\it Well-posedness for the Schr\"odinger-Korteweg-de Vries system},
{Trans. Amer. Math. Soc., \textbf{359} (2007), 4089-4106}.

\smallskip 
\bibitem{D-Nuyen-S}
{B. Deconinck,  N.  V.  Nguyen and B. L. Segal}.
{\it The interaction of long and short waves in dispersive media}, 
{J. Phys. A: Math. Theor. \textbf{49} (2016), 415501 (10pp)}.

\smallskip 
\bibitem{fi1}
{M. Funakoshi and M. Oikawa}.
{\it The resonant interaction between a long internal gravity wave and a surface gravity wave packet}, 
{J. Phys. Soc. Japan, \textbf{52} (1983), 1982-1995}.

\smallskip 
\bibitem{GTV} 
{J. Ginibre, Y. Tsutsumi and G. Velo}.
{\it On the Cauchy problem for the Zakharov system}, 
{J. Funct. Anal., \textbf{151} (1997), 384-436}.

\smallskip 
\bibitem{GM}
{B. Guo and C. Miao.}
{\it Well-posedness of the Cauchy problem for the coupled system of the Schr\"odinger-KdV equations}, 
{Acta Math. Sinica, Engl. Series, \textbf{15} (1999), 215-224}.

\smallskip 
\bibitem{GW}
{Z. Guo and Y. Wang}.
{\it On the well-posedness of the Schr\"odinger-Korteweg-de Vries system},
{J. Differential Equations, \textbf{249} (2010), 2500-2520}.

\smallskip 
\bibitem{fi2}
{H. Hojo, H. Ikezi, K. Mima and K. Nishikawa}.
{\it Coupled nonlinear electron-plasma and ion-acoustic waves}, 
{Phys. Rev. Lett., \textbf{33} (1974), 148-151}. 

\smallskip 
\bibitem{Holmer}
{J. Holmer}.
{\it The initial-boundary value problem for the 1D nonlinear Schr\"odinger equation on the half-line},
{Differential Integral Equations, \textbf{18} (2005), 647-668}.

\smallskip 
\bibitem{Holmerkdv}
{J. Holmer}.
{\it The initial-boundary value problem for the Korteweg-de Vries equation}, 
{Comm. Partial Differential Equations, \textbf{31} (2006), 1151-1190}.

\smallskip 
\bibitem{fi3}
{T. Kakutani, T. Kawahara and N. Sugimoto}. 
{\it Nonlinear interaction between short and long capillary-gravity waves}, 
{J. Phys. Soc. Japan, \textbf{39} (1975), 1379-1386}.

\smallskip 
\bibitem{Kato}
{T. Kato}.
{\it On nonlinear Scr\"odinger equations II.  $H^s$- solutions and unconditional well-posedness},
{J. d'Analyse Math., \textbf{67} (1995), 281-306}. 


\smallskip 
\bibitem{KPV}
{C. E. Kenig, G. Ponce and L. Vega}.
{\it Oscillatory integrals and regularity of dispersive equations}, 
{Indiana Univ. Math. J., \textbf{40} (1991), 33-69}.

\smallskip 
\bibitem{KPV-1}
{C. E. Kenig, G. Ponce and L. Vega}.
{\it Quadratic Forms for the 1-D Semilinear Schr\"odinger Equation},
{Trans. Amer. Math. Soc., \textbf{348} (1996),  3323-3353}.

\smallskip 
\bibitem{Linares-Ponce}
{F. Linares and G. Ponce}.
{\it Introduction to Nonlinear Dispersive Equations},
{Second ed., Springer (2014).}

\smallskip 
\bibitem{Liu-Nguyen}
{C.  Liu and N. V. Nguyen}. 
{\it On the (in)validity of the NLS-KdV system in the study of water waves},
{arXiv preprint, (2016) arXiv:1604.08559v1}.

\smallskip 
\bibitem{Pecher}
{H. Pecher}. 
{\it The Cauchy Problem for a Schr\"odinger-Korteweg-de Vries system with rough data},
{Diff. Integral Equations, {\bf 18} (2005), 1147-1174.}


\smallskip 
\bibitem{fi4}
J. Satsuma and N. Yajima. 
{\it Soliton solutions in a diatomic lattice system}, 
{Progr. Theor. Phys., \textbf{62} (1979), 370-378}.

\smallskip 
\bibitem{Tao}
{T. Tao}.
{\it Nonlinear dispersive equations: local and global analysis},
Vol. 106, American Mathematical Soc. (2006).

\smallskip 
\bibitem{T}
{M. Tsutsumi}.
{\it Well-posedness of the Cauchy problem for a coupled Schr\"odinger-KdV equation},
Math. Sciences Appl., \textbf{2} (1993), 513-528.

\smallskip 
\bibitem{WangCui}
{H. Wang  and S. Cui}.
{\it The Cauchy problem for the Schrödinger–KdV system},
{J. Differential Equations, \textbf{250} (2011) 3559-3583}.

\smallskip 
\bibitem{YW}
{Y. Wu}. 
{\it The Cauchy problem of the Schr\"odinger-Korteweg-de Vries system}, 
Differential Integral Equations, \textbf{23} (2010), 569-600.

\end{thebibliography}
\end{document}